\newcounter{sarrow}
\renewcommand{\leq}{\leqslant}
\renewcommand{\geq}{\geqslant}
\def\subsection{\@startsection{subsection}{3}%
  \z@{.5\linespacing\@plus.7\linespacing}{.3\linespacing}%
  {\bfseries\centering}}
\def\subsubsection{\@startsection{subsubsection}{3}%
  \z@{.5\linespacing\@plus.7\linespacing}{.3\linespacing}%
  {\centering}}
\def\myfnt{\ifx\protect\@typeset@protect\expandafter\footnote\else\expandafter\@gobble\fi}
\theoremstyle{definition}
\newtheorem{theorem}{Theorem}[section]
\newtheorem{definition}[theorem]{Definition}
\newtheorem{lemma}[theorem]{Lemma}
\newtheorem{proposition}[theorem]{Proposition}
\newtheorem{example}[theorem]{Example}
\newtheorem{corollary}[theorem]{Corollary}
\newtheorem{oproblem}[theorem]{Open Problem}
\newtheorem{remark}[theorem]{Remark}
\newtheorem{definition/proposition}[theorem]{Definition/Proposition}
\newcounter{claimcounter}
\numberwithin{claimcounter}{theorem}
\newenvironment{claim}{\stepcounter{claimcounter}{\noindent {\bf Claim \theclaimcounter.}}}{}
\newenvironment{claimproof}[1]{\noindent{{\em Proof.}}\space#1}{\hfill $\rule{0.35em}{0.35em}$}
\newcommand{\pureindep}[1][]{%
  \mathrel{
    \mathop{
      \vcenter{
        \hbox{\oalign{\noalign{\kern-.3ex}\hfil$\vert$\hfil\cr
              \noalign{\kern-.7ex}
              $\smile$\cr\noalign{\kern-.3ex}}}
      }
    }\displaylimits_{#1}
  }
}
\newcommand{\indep}[2]{%
  \mathrel{
    \mathop{
      \vcenter{
        \hbox{%
\oalign{
\noalign{\kern-.3ex}\hfil$\vert$\hfil\cr
              \noalign{\kern-.7ex}
              $\smile$\cr\noalign{\kern-.3ex}
}
}
      }
}^{\!\!\!\!\!#2}_{\!\!\hspace{-0.1em}#1}
  }
}
\newcommand{\displayindep}[2]{%
  \mathrel{
    \mathop{
      \vcenter{
        \hbox{%
\oalign{
\noalign{\kern-.3ex}\hfil$\vert$\hfil\cr
              \noalign{\kern-.7ex}
              $\smile$\cr\noalign{\kern-.3ex}
}
}
      }
}^{\!\!\hspace{-0.1em}#2}_{\!\!\hspace{-0.1em}#1}
  }
}
\newcommand{\displayfindep}[2]{%
  \mathrel{
    \mathop{
      \vcenter{
        \hbox{%
\oalign{
\noalign{\kern-.3ex}\hfil$\vert$\hfil\cr
              \noalign{\kern-.7ex}
              $\smile$\cr\noalign{\kern-.3ex} 
}
}
      }
}^{\!\hspace{-0.14em}#2}_{\!\!\hspace{-0.05em}#1}
  }
}
\def\presuper#1#2%
\begin{document}

\begin{abstract} Based on Crapo's theory of one point extensions of combinatorial geometries, we find various classes of geometric lattices that behave very well from the point of view of stability  theory. One of them, $(\mathbf{K}^3, \preccurlyeq)$, is $\omega$-stable, it has a monster model and an independence calculus that satisfies all the usual properties of non-forking. On the other hand, these classes are rather unusual, e.g. in $(\mathbf{K}^3, \preccurlyeq)$ the Smoothness Axiom fails, and so $(\mathbf{K}^3, \preccurlyeq)$ is not an $\mathrm{AEC}$.
\end{abstract}

\title[Beyond $\mathrm{AECs}$: On the Model Theory of Geometric Lattices]{Beyond Abstract Elementary Classes: On the Model Theory of Geometric Lattices}
\thanks{The research of the second author was supported by the Finnish Academy of Science and Letters (Vilho, Yrj\"o and Kalle V\"ais\"al\"a foundation). The second author would like to thank John Baldwin and Will Boney for useful suggestions and discussions related to this paper.}

\author{Tapani Hyttinen}
\address{Department of Mathematics and Statistics,  University of Helsinki, Finland}

\author{Gianluca Paolini}
\address{Department of Mathematics and Statistics,  University of Helsinki, Finland}

\date{\today}
\maketitle


\section{Introduction}

	Not much is known about the model theory of pregeometries. Apart from some examples of geometries obtained from Hrushovski's constructions (see e.g. \cite{baldwin}), only a few sporadic studies appear in the literature (see e.g. \cite{shegirov}), and they are limited to a rather basic first-order analysis of classical geometries (mostly modular). In the present paper we look into this gap, in particular from a non-elementary point of view, i.e. from the perspective of {\em abstract elementary classes} \cite{shelah_abstr_ele_cla} ($\mathrm{AEC}$s for short).

	When looked upon as sets with a closure operator satisfying some additional requirements, pregeometries are not naturally seen as structures in the sense of model theory. On the other hand, when one moves from pregeometries to their associated canonical geometries, one gets (without loss of any essential information) objects that are in one-to-one correspondence with {\em geometric lattices}, which are natural ordered algebraic structures, and so they fit perfectly with the model-theoretic perspective.
	
	Assuming this perspective we also connect with the extensive literature on pseudoplanes and pseudospaces (see e.g. \cite{ample} and \cite{tent}). In this literature it is often assumed a graph-theoretic approach, where pseudospaces are considered as coloured graphs. In the case of geometric lattices the approach is of course lattice-theoretic, but the crucial {\em semimodularity} condition (one of the three defining them, see Definition \ref{def_geom_lat}) imposes that these structures are ranked, and thus naturally seen as coloured graphs. In fact, if the rank of the lattice is e.g. $3$, then geometric lattices are nothing but planes in the classical sense of geometry, i.e. systems of points and lines. On the other hand, the fact that we are working with semimodular lattices imposes much more geometric structure than in the case of pseudospaces, which are purely combinatorial in nature. For example, in planes each pair of lines intersect in at most one point, while in pseudoplanes lines can intersect in two different points. At the best of our knowledge, there is no single work on the model theory of semimodular lattice, and so looking at our work from this perspective we fill yet another gap in the model-theoretic literature. 

	In this paper we will always assume that our lattices are of finite rank (a.k.a. length) $n$, and in the most involved results we will further restrict to the case $n = 3$, i.e. planes (as already noticed). A point of divergence between our work and the standard references on the subject (e.g. \cite{aigner} and \cite{rota}) is that usually subgeometries are defined to be substructures in the vocabulary $L ' = \{ 0,1,\vee\}$ generated by a spanning set of atoms (this is of course motivated by various technical reasons). In this paper we consider also submodel relations in other vocabularies, specifically $L = \{ 0,1,\vee ,\wedge\}$, and various other strong submodel relations.

	The main question we ask is: are there good classes of geometric lattices? For a class to be good our minimum requirements are that the class should have a monster model and be stable. The most obvious classes fail to satisfy these requirements. Specifically, the class $\mathbf{K}^{n}_0$ of geometric lattices of a fixed rank $n \geq 3$, with the submodel relation $\preccurlyeq_{L'}$ in the vocabulary $L' = \{ 0,1,\vee\}$ as the strong submodel relation, does not satisfy the amalgamation property (Example \ref{failure_amalgamation}), and so it does not admit a monster model. 
	On the other hand, if one strengthens the vocabulary to $L = \{ 0,1,\vee ,\wedge\}$ and takes as the strong submodel relation the submodel relation $\preccurlyeq_{L}$ in this vocabulary, one does get amalgamation (Theorem \ref{amalgamation_finite_planes}). But one also gets all the bad stability-theoretic properties, as the monster model of $(\mathbf{K}^{n}_0, \preccurlyeq_{L})$ has the independence property (Theorem \ref{indepence_property}), and is thereby unstable. Restriction to modular lattices might help in restoring stability, but we are not interested in modular geometries here.

	So one needs to look elsewhere to find good classes of geometries. One classical  result on geometric lattices is Crapo's theorem on {\em one-point extensions} of combinatorial geometries \cite{crapo}. This result states that every finite geometric lattice of rank $n$ can be constructed from the Boolean algebra on $n$ atoms via a finite sequence of one-point extensions, which in turn are completely determined by so-called modular cuts (Definition \ref{def_mod_cut}).
	More than an operation on geometries, the theory of one-point extensions of geometric lattices is actually an explicit description of embeddings of finite geometries (in the sense of $L' = \{ 0,1,\vee\}$). Consequently, defining $\mathcal{A} \preccurlyeq \mathcal{B}$ to mean that $\mathcal{B}$ can be constructed from $\mathcal{A}$ via a sequence of one-point extensions, does not help. In fact, we are essentially back in the case $(\mathbf{K}^{n}_0, \preccurlyeq_{L'})$, which, as already noticed, is untractable according to our criteria.

	On the other hand, if we restrict to {\em principal} one-point extensions, i.e. extensions that correspond to principal modular cuts (Definition \ref{def_mod_cut}), then the situation is different. In fact, if one defines $\mathcal{A} \preccurlyeq \mathcal{B}$ to mean that $\mathcal{B}$ can be constructed from $\mathcal{A}$ via a sequence of principal one-point extensions and looks at the class $\mathbf{K}^{n}$ of structures in which the Boolean algebra on $n$ atoms $\preccurlyeq$-embeds, then $(\mathbf{K}^{n},\preccurlyeq)$ has a monster  model, and, at least for $n = 3$, it is stable in every infinite cardinality (Corollary \ref{stability}). Furthermore, $(\mathbf{K}^{3},\preccurlyeq)$ has a rather nice theory of independence, i.e. it admits what we call an independence calculus (Theorem \ref{good_frame_th}). On the other hand, the class is not an $\mathrm{AEC}$, as the Smoothness Axiom fails for $(\mathbf{K}^{3},\preccurlyeq)$ (Theorem \ref{th_fail_smooth}), although all the other axioms for $\mathrm{AEC}$ are satisfied (Theorem \ref{our_class_is_almost_AEC}).

	If we lift the well-foundedness requirement from $\preccurlyeq$, and further loosen up this relation so as to impose coherence, then we do get an $\mathrm{AEC}$ (denoted as $(\mathbf{K}_+^{3},\preccurlyeq^+)$) (Theorem \ref{+_is_AEC}), but it may not be $\omega$-stable any more. However, $(\mathbf{K}_+^{3},\preccurlyeq^+)$ remains at least stable (Theorem \ref{+_is_stable}).

\section{Geometric Lattices}\label{geo_lat}

	We begin with the definition of combinatorial geometries as closure systems of the form $(M, \mathrm{cl})$. Although our approach will be lattice-theoretic, we define (pre)geometric closure operators, in order to fix notations and definitions, and to state the well-known correspondence between finite dimensional combinatorial geometries and geometric lattices of finite rank. For undefined notions or details we refer the reader to \cite{aigner}, \cite{rota} and \cite{oxley}.
	
		\begin{definition}\label{def_pregeo} 
We say that $(M, \mathrm{cl})$ is a \emph{pregeometry} if for every $A, B \subseteq M$ and $a, b \in M$ the following conditions are satisfied:
	\begin{enumerate}[i)]
		\item $A \subseteq B$ implies $A \subseteq cl(A) \subseteq cl(B)$;
	\item if $a \in \mathrm{cl}(A \cup \left\{ b \right\}) - \mathrm{cl}(A)$, then $b \in \mathrm{cl}(A \cup \left\{ a \right\})$;
	\item if $a \in \mathrm{cl}(A)$, then $a \in \mathrm{cl}(A_0)$ for some finite $A_0 \subseteq A$.
\end{enumerate}
\end{definition} 

	\begin{example} Let $V$ be a finite dimensional vector space over a field $K$, and, for $A \subseteq V$, let $cl(A) = \langle A \rangle$ (linear span). Then $(V, cl)$ is a pregeometry.
\end{example}

\begin{definition} Let $(M, \mathrm{cl})$ be a pregeometry.
	\begin{enumerate}[i)]
	\item We say that $(M, \mathrm{cl})$ is a {\em combinatorial geometry} if $\mathrm{cl}(\emptyset) = \emptyset$ and $\mathrm{cl}(\left\{ p \right\}) = \left\{ p \right\}$, for every $p \in M$.
	\item We say that $(M, \mathrm{cl})$ is a {\em finite dimensional} pregeometry, a.k.a. {\em matroid}, if for every $B \subseteq M$ there exists finite $A \subseteq B$ with $\mathrm{cl}(A) = \mathrm{cl}(B)$. 
	\item We say that $(M, \mathrm{cl})$ is a {\em simple matroid} if it is a finite dimensional combinatorial geometry.
	\end{enumerate}
\end{definition}

	Recall that a lattice is an order $(L, \leq)$ such that any two elements $a, b$ have a least upper bound and a greatest lower bound, denoted by $a \wedge b$ and $a \vee b$, respectively. A chain in a lattice $(L, \leq)$ is a subset $X \subseteq L$ such that $(X, \leq)$ is a linear order.
	
	\begin{quote} {\bf Assumption: In this paper all lattices have a maximum element $1$ and a minimum element $0$. Furthermore, any chain between any two elements is finite.}
\end{quote}

	Given a lattice $(L, \leq)$ (as in the assumption above) and $x \in L$, we let $h(x)$, the height of $x$, to be the length of the longest maximal chain between $0$ and $x$. Furthermore, given $a, b \in L$, we say that $a$ is {\em covered} by $b$, for short $a \lessdot b$, if $a < b$ and for every $a \leq c \leq b$ we have that either $a = c$ or $c = b$. Finally, we say that $a$ is an {\em atom} if it covers $0$.
	
	\begin{definition}\label{def_geom_lat} Let $(L, \leq)$ be a lattice.
		\begin{enumerate}[i)]
		\item We say that $(L, \leq)$ is {\em semimodular} if for every $a, b \in L$ we have that 
		\[ a \wedge b \lessdot a \; \Rightarrow \; b \lessdot a \vee b. \]
		\item We say that $(L, \leq)$ is a {\em point lattice} if every $a \in L$ is a supremum of atoms.
		\item We say that $(L, \leq)$ is {\em geometric} if $(L, \leq)$ is a semimodular point lattice (without infinite chains).
\end{enumerate}
\end{definition}

	We now give an equivalent definition of geometric lattices that will be useful for our model-theoretic analysis of these structures.

	\begin{proposition}\label{equiv_def_geom_lat} A semimodular lattice $(L, \leq)$ (without infinite chains and with $0$ and $1$) is geometric if and only if it is relatively complemented, i.e. for every interval $[a, b]$ of $(L, \leq)$ and $c \in L$, there exists $d \in L$ such that $c \wedge d = a$ and $c \vee d = b$.
\end{proposition}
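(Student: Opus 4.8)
The plan is to prove the two implications separately. Throughout I will use, besides semimodularity and the point-lattice property, only the absence of infinite chains, in the following form: every nonempty subset of $L$ has a maximal (and a minimal) element, and every interval $[u,v]$ with $u<v$ contains an element covering $u$ (otherwise one builds an infinite descending chain inside $[u,v]$). In particular every nonzero element of $L$ lies above an atom.

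For the implication ``relatively complemented $\Rightarrow$ geometric'' it suffices to produce, for each $x\in L$, a representation of $x$ as a join of atoms (semimodularity is not even used here). Fix $x\neq 0$ and let $y$ be the join of the nonempty set of atoms below $x$, so $0<y\leq x$. Applying relative complementation to the interval $[0,x]$ and the element $y$ yields $z$ with $y\wedge z=0$ and $y\vee z=x$. If $z\neq 0$, pick an atom $p\leq z$; then $p\leq z\leq x$ forces $p\leq y$, hence $p\leq y\wedge z=0$, a contradiction. So $z=0$ and $x=y$ is a join of atoms, as desired.

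For the converse, assume $(L,\leq)$ is geometric, fix an interval $[a,b]$ and $c\in[a,b]$, and seek $d$ with $c\wedge d=a$ and $c\vee d=b$. The family $\{d\in[a,b]:c\wedge d=a\}$ is nonempty (it contains $a$) and the heights of its members are bounded by $h(b)$, so it has a member $d$ of maximal height; I claim $c\vee d=b$, which finishes the proof. If not, then since $b$ is a join of atoms there is an atom $p\leq b$ with $p\nleq c\vee d$; in particular $p\nleq d$, so $d<d\vee p\leq b$ and, from $d\wedge p=0\lessdot p$, semimodularity gives $d\lessdot d\vee p$. Thus $d\vee p$ is an element of $[a,b]$ strictly above $d$, and to contradict the maximality of $d$ it suffices to show $c\wedge(d\vee p)=a$.

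This last step is the only real obstacle. Suppose $c\wedge(d\vee p)>a$ and pick $q$ with $a\lessdot q\leq c\wedge(d\vee p)$. From $q\leq c$ we get $q\nleq d$ (otherwise $q\leq c\wedge d=a$), so $d\wedge q=a\lessdot q$ and semimodularity gives $d\lessdot d\vee q$; combined with $q\leq d\vee p$ this yields $d<d\vee q\leq d\vee p$, and since $d\vee p$ covers $d$ we conclude $d\vee q=d\vee p$. Hence $p\leq d\vee q\leq d\vee c$, contradicting $p\nleq c\vee d$. Therefore $c\wedge(d\vee p)=a$, violating the maximality of $d$, and we are done. (The same computation in fact shows that every interval of a geometric lattice is again geometric, which is how this fact is usually recorded, though it is not logically needed for the statement as phrased.)
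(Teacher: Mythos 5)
Your argument is correct. Note that the paper does not give a proof of this proposition at all: it simply cites \cite[Proposition 2.36]{aigner}, so there is no in-text argument to compare with; what you have written is a self-contained version of the standard proof. Both directions check out: relative complementation applied to $[0,x]$ and the join $y$ of the atoms below $x$ forces the complement $z$ to be $0$ (an atom below $z$ would lie below $y$), and conversely a height-maximal element $d$ of $\{d\in[a,b] : c\wedge d=a\}$ satisfies $c\vee d=b$, semimodularity being used once to get $d\lessdot d\vee p$ and once more, via an element $q$ covering $a$ inside $[a,\,c\wedge(d\vee p)]$, to rule out $c\wedge(d\vee p)>a$; one could equally well take $d$ maximal in that set with respect to the order, which avoids invoking the rank function. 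Two small points are worth recording, neither of which affects correctness. First, the proposition's phrase ``$c\in L$'' must be read as ``$c\in[a,b]$'': the literal statement is false (in the Boolean algebra on three atoms take the interval $[\{1\},\{1,2\}]$ and $c=\{3\}$; then $c\wedge d\leq c$ can never equal $\{1\}$), and you silently adopt the correct reading in both directions, which is also what Aigner proves. Second, ``the join of the nonempty set of atoms below $x$'' is a possibly infinite join; it exists because a lattice without infinite chains is complete, which follows in one line from the maximal-element principle you stated (take a maximal element among finite joins of atoms below $x$; by maximality it dominates every atom below $x$), or you can simply take $y$ to be such a maximal finite join and run the same argument.
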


	\begin{proof} See e.g. \cite[Proposition 2.36]{aigner}.
\end{proof}

	It is possible to show that in a semimodular lattice any two maximal chains between $0$ and a fixed element $a$ have the same (finite) length. We denote this common length by $r(a)$, for rank (of course $r(a) = h(a)$). Furthermore, the atoms of a geometric lattice $(L, \leq)$ are called {\em points} and denoted by $P(L)$ or $P(L, \leq)$, if possible confusion arises.

	\begin{example}\label{FG_3} In Figure \ref{HasseFG_3} we see the Hasse diagram of the Boolean algebra of rank $3$. This lattice is the smallest geometric lattice of rank $3$, and it is denoted as $FG_3$. Throughout the paper we draw Hasse diagrams of geometric lattices omitting the straight lines representing the edge relation. Given that we always represent geometric lattices via their associated geometry and that the order is inclusion in the latter, this is with no loss of information.
	
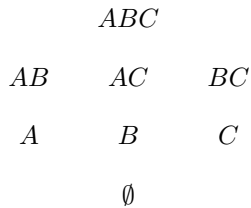
\begin{figure}[ht]
	\begin{center}
	\begin{tikzpicture}
\matrix (a) [matrix of math nodes, column sep=0.4cm, row sep=0.3cm]{
   & ABC &  \\
AB & AC  & BC \\
A  & B   & C  \\
& \emptyset \\};

\end{tikzpicture}
\end{center}  \caption{Hasse diagrams of $FG_3$.}\label{HasseFG_3}
\end{figure}
\end{example}

	The following well-known theorem establishes an explicit correspondence between geometric lattices and simple matroids, making geometric lattices one of the many cryptomorphic descriptions of matroids. 

	\begin{theorem}[Birkhoff-Whitney] 
	\begin{enumerate}[i)]
	\item Let $(M, \mathrm{cl})$ be a matroid and $L(M)$ the set of closed subsets of $M$, i.e. the $X \subseteq M$ such that $\mathrm{cl}(X) = X$. Then $(L(M), \subseteq)$ is a geometric lattice.
	\item Let $(L, \leq)$ be a geometric lattice with point set $M$ and for $A \subseteq M$ let $$\mathrm{cl}(A) = \left\{ p \in M \, | \, p \leq \bigvee A \right\}.$$ Then $(M, \mathrm{cl})$ is a simple matroid. Furthermore, the function $\phi: L \rightarrow L(M)$ such that $\phi(x) = \left\{ p \in M \, | \, p \leq x \right\}$ is a lattice isomorphism.
\end{enumerate}	
\end{theorem}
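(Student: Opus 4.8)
The plan is to treat the two assertions separately, reducing each to routine manipulations with the closure axioms of Definition \ref{def_pregeo} and the lattice axioms, supplemented on the matroid side by the standard rank function and its submodularity. For part (i), I would first observe that $(L(M),\subseteq)$ is a lattice: an intersection of closed sets is closed (immediate from monotonicity of $\mathrm{cl}$), so $X \wedge Y = X \cap Y$, while $X \vee Y = \mathrm{cl}(X \cup Y)$, with bottom $\mathrm{cl}(\emptyset)$ and top $M$. Since $(M,\mathrm{cl})$ is finite-dimensional, every flat has a finite basis, so the rank function $r$ (size of a maximal independent subset) is finite, strictly increasing along chains of flats — so $L(M)$ has no infinite chains — and raised by exactly one at covers. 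The point-lattice condition holds because each $\mathrm{cl}(\{p\})$ with $p \notin \mathrm{cl}(\emptyset)$ is an atom and every flat is the join of the atoms below it. The crux is semimodularity, which I would extract from the submodular inequality $r(X \cup Y) + r(X \cap Y) \le r(X) + r(Y)$: if $X \cap Y \lessdot X$ then $r(X) = r(X \cap Y) + 1$, hence $r(X \vee Y) \le r(Y) + 1$; since $Y \subsetneq X \vee Y$ and covers raise rank by one, this forces $Y \lessdot X \vee Y$.

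For part (ii), I would first check that $\mathrm{cl}(A) = \{p \in M : p \le \bigvee A\}$ is a closure operator: monotonicity and $A \subseteq \mathrm{cl}(A)$ are clear, and idempotence follows from $\bigvee \mathrm{cl}(A) = \bigvee A$. The exchange axiom (ii) of Definition \ref{def_pregeo} is where semimodularity of $L$ enters: putting $a = \bigvee A$, if $p \le a \vee q$ and $p \not\le a$, then since $q$ is an atom not below $a$ we have $a \wedge q = 0 \lessdot q$, so semimodularity yields $a \lessdot a \vee q$; as $a < a \vee p \le a \vee q$ we conclude $a \vee p = a \vee q$, whence $q \le a \vee p$, i.e. $q \in \mathrm{cl}(A \cup \{p\})$. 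The finitary axiom (iii) and finite-dimensionality both follow from the no-infinite-chains hypothesis on $L$: the finite joins $\bigvee A'$ ($A' \subseteq A$ finite) form a chain-finite poset bounded above by $\bigvee A$, so any maximal such join already equals $\bigvee A$. Finally $\mathrm{cl}(\emptyset) = \emptyset$ and $\mathrm{cl}(\{p\}) = \{p\}$ are immediate (no atom lies below $0$, and the only atom below an atom is itself), so $(M, \mathrm{cl})$ is a simple matroid.

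It remains to show that $\phi(x) = \{p \in M : p \le x\}$ is a lattice isomorphism $L \to L(M)$. The key input is that $L$ is a point lattice, so $x = \bigvee \phi(x)$ for every $x$; from this, $\phi(x)$ is a flat (hence $\phi$ really maps into $L(M)$), $\phi$ is injective, and $\phi$ is surjective because $\phi(\bigvee X) = \mathrm{cl}(X) = X$ for each flat $X$. Order is reflected in both directions ($x \le y \Leftrightarrow \phi(x) \subseteq \phi(y)$, the nontrivial implication again using $x = \bigvee \phi(x)$), and a bijection between lattices preserving and reflecting order automatically preserves meets and joins. I expect the only genuine difficulty — in what is, after all, a classical result — to be keeping the ``covers raise rank by one'' dictionary straight on both sides: the semimodularity computation of (i) and its mirror, the exchange axiom of (ii), are precisely the points where the finiteness-of-chains hypothesis and matroid submodularity do the real work, and everything else is bookkeeping. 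As a variant, once semimodularity of $L(M)$ is established one could replace the direct verification of the point-lattice condition by an appeal to Proposition \ref{equiv_def_geom_lat}, exhibiting relative complements of flats instead.
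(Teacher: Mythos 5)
The paper offers no proof of this statement at all: it simply points to \cite[Theorem 6.1]{aigner}, treating the Birkhoff--Whitney cryptomorphism as a known background fact. Your sketch is correct and is essentially the standard textbook argument that such a citation hides, so there is nothing to object to mathematically: the semimodularity-from-submodularity computation in (i), the exchange-from-semimodularity computation in (ii) (the step $a \wedge q = 0 \lessdot q \Rightarrow a \lessdot a \vee q \Rightarrow a \vee p = a \vee q$), and the use of $x = \bigvee \phi(x)$ to get injectivity, surjectivity onto flats, and order-reflection of $\phi$ are exactly the right pivots, and you correctly handle the non-simple case in (i) by taking the atoms to be the sets $\mathrm{cl}(\{p\})$ with $p \notin \mathrm{cl}(\emptyset)$ rather than the points themselves. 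Two things you lean on without proof are themselves consequences of the exchange axiom rather than free: the existence and submodularity of the rank function on flats, and the fact that covers in $L(M)$ raise rank by exactly one (and, conversely, that nested flats with rank difference one form a cover, which is the direction your semimodularity step actually uses); in a fully written-out version these would need a lemma, but at the level of detail the paper itself adopts (a citation) this is acceptable. A small additional point worth making explicit in (ii) is that $\bigvee A$ exists for arbitrary $A \subseteq M$ because a bounded lattice without infinite chains is complete; your chain-finiteness argument for the finitary axiom (iii) is the same observation and covers it. In short: where the paper delegates to Aigner, you reconstruct Aigner's proof, correctly.
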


	\begin{proof} See e.g. \cite[Theorem 6.1]{aigner}.
\end{proof}

	We will often identify geometric lattices which are isomorphic through an isomorphism $\pi$ which is the identity on points.	This can be made formal passing from the given isomorphic lattices to the lattice of closed sets of their associated simple matroid, as in the theorem above. This should be kept in mind in what follows, in particular in Section \ref{plane}. We now introduce the notion of subgeometry and $\wedge$-subgeometry. The distinction between these two notions will be crucial in what follows.

	\begin{definition}\label{def_meet_subgeo} Let $(L, \leq)$ be a lattice.
		\begin{enumerate}[i)]
		\item Given $N \subseteq P(L)$, we define the subgeometry of $(L, \leq)$ generated by $N$ to be the $\vee$-semilattice of $(L, \leq)$ generated by $N$ together with the induced ordering.
		\item We say that $(L', \leq)$ is a {\em subgeometry} of $(L, \leq)$ if it is a subgeometry of $(L, \leq)$ generated by $N$ for some $N \subseteq P(L)$.
		\item We say that $(L', \leq)$ is a $\wedge$-{\em subgeometry} of $(L, \leq)$ if it is a subgeometry of $(L, \leq)$ and for every $a, b \in L'$ we have that $$(a \wedge b)^{L'} = (a \wedge b)^{L}.$$
\end{enumerate}
\end{definition}

\begin{example} Let $(L, \leq)$ be the geometry represented in the first diagram of Figure \ref{Hasse_meet_subgeo} (remember the convention about representations of geometric lattices explained in Example \ref{FG_3}). Let $(L', \leq)$ be the subgeometry generated by $\left\{ A, B, C, D \right\}$ (cf. the second diagram of Figure \ref{Hasse_meet_subgeo}) and $(L'', \leq)$ the subgeometry generated by $\left\{ A, B, C, E \right\}$ (cf. the third diagram of Figure \ref{Hasse_meet_subgeo}). Then $(L'', \leq)$ is a $\wedge$-subgeometry of $(L, \leq)$, but $(L', \leq)$ is not, in fact $(BCE \wedge ADE)^L = E \neq 0 = (BC \wedge AD)^{L'}$.
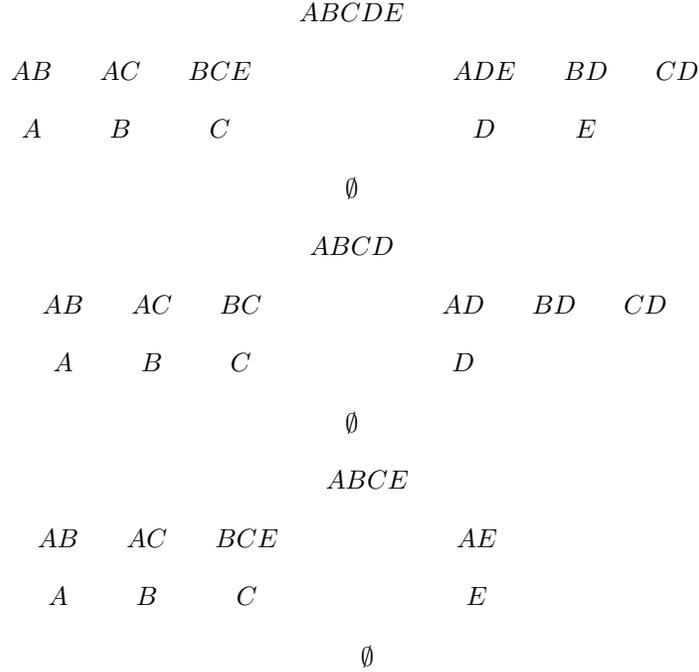
\begin{figure}[ht]
	\begin{center}
	\begin{tikzpicture}
\matrix (a) [matrix of math nodes, column sep=0.4cm, row sep=0.3cm]{
   &  & & ABCDE \\
AB & AC & BCE & & ADE & BD & CD \\
A  &  B  &  C & & D & E & \\
& & & \emptyset \\};
\end{tikzpicture}
	\begin{tikzpicture}
\matrix (a) [matrix of math nodes, column sep=0.4cm, row sep=0.3cm]{
   &  & & ABCD \\
AB & AC & BC & & AD & BD & CD \\
A  &  B  &  C & & D & & \\
& & & \emptyset \\};
\end{tikzpicture}
	\begin{tikzpicture}
\matrix (a) [matrix of math nodes, column sep=0.4cm, row sep=0.3cm]{
   &  & & ABCE \\
AB & AC & BCE & & AE & \phantom{AA} & \phantom{AA}\\
A  &  B  &  C & & E & \phantom{AA} & \\
& & & \emptyset \\};
\end{tikzpicture}
\end{center}  \caption{Subgeometry and $\wedge$-subgeometry.}\label{Hasse_meet_subgeo}
\end{figure}
\end{example}

	We now go into the theory of one-point extensions of geometric lattices \cite{crapo}. We first need some preliminary definitions.
	
	\begin{definition}\label{def_mod_cut} Let $(L, \leq)$ be a geometric lattice and $C \subseteq L$.
		\begin{enumerate}[i)]
		\item We say that $C$ is a {\em cut}, a.k.a. upperset, a.k.a. cone, if whenever $x \in C$ and $x \leq y \in L$, then $y \in C$.
		\item We say that $C$ is a {\em principal cut} if it is a cut of the form $\left\{ x \in L \, | \, a \leq x \right\}$ for some $a \in L$.
		\item We say that $C$ is a {\em modular cut} if it is a cut and for every $a, b \in C$, if $(a, b)$ is a modular pair, i.e. $$ r(a\vee b) + r(a \wedge b) = r(a) + r(b),$$ then $a \wedge b \in C$.
\end{enumerate}
\end{definition}

	Cuts containing points are called trivial. In our work we will consider only non-trivial cuts.

\begin{proposition}\label{one_point_1} Let $(L, \leq)$ be a geometric lattice with $P(L) = M \cup \left\{ p \right\}$ for $p \not\in M$, and $(L', \leq)$ the subgeometry of $(L, \leq)$ generated by $M$. Then the set $\left\{ x \in L' \, | \, p \leq x \right\}$ is a modular cut of the geometry $(L', \leq)$.
\end{proposition}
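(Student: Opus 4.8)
We need to show that $C := \{x \in L' \mid p \leq x\}$ is a modular cut of $(L', \leq)$. There are two things to verify: first, that $C$ is a cut in $L'$ (an upper set), and second, that $C$ is closed under meets of modular pairs computed in $L'$.

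**Proof proposal.**

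The plan is as follows. First I would check that $C$ is an upper set in $L'$: if $x \in C$ and $x \leq y$ with $y \in L'$, then since $p \leq x \leq y$ we get $p \leq y$, so $y \in C$. This is immediate and uses only transitivity of $\leq$; note that the ordering on $L'$ is the induced ordering, so $\leq$ is unambiguous here.

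The substantive step is modularity. Suppose $a, b \in C$, so $p \leq a$ and $p \leq b$ in $L$, and suppose $(a,b)$ is a modular pair \emph{in $L'$}, i.e. $r^{L'}(a \vee^{L'} b) + r^{L'}(a \wedge^{L'} b) = r^{L'}(a) + r^{L'}(b)$. We want $p \leq (a \wedge^{L'} b)$. The first thing to pin down is that ranks are absolute: since $L'$ is the $\vee$-subsemilattice of $L$ generated by $M$ and both are geometric with the same atom set on the relevant elements, the rank of an element of $L'$ (its height, by semimodularity) computed in $L'$ agrees with its height in $L$ — a maximal chain below $a$ in $L'$ can be refined in $L$, but semimodularity forces all maximal chains to have the same length, and any chain in $L'$ is a chain in $L$, so $r^{L'}(a) = r^L(a)$ for $a \in L'$. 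Likewise $a \vee^{L'} b = a \vee^L b$ since the subgeometry is a $\vee$-subsemilattice. The only possible discrepancy is at the meet: $a \wedge^{L'} b \geq a \wedge^L b$ in general, because $L'$ need not be a $\wedge$-subgeometry. So from the modular-pair hypothesis in $L'$ we get
\[
r^L(a \vee^L b) + r^{L}(a \wedge^{L'} b) = r^L(a) + r^L(b).
\]
On the other hand, in the geometric (hence semimodular) lattice $L$ we always have the submodular inequality $r^L(a \vee^L b) + r^L(a \wedge^L b) \leq r^L(a) + r^L(b)$. Combining, $r^L(a \wedge^{L'} b) \leq r^L(a \wedge^L b)$. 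But $a \wedge^L b \leq a \wedge^{L'} b$, so $r^L(a \wedge^L b) \leq r^L(a \wedge^{L'} b)$, forcing $r^L(a \wedge^L b) = r^L(a \wedge^{L'} b)$ together with $a \wedge^L b \leq a \wedge^{L'} b$; since height equality along a comparable pair in a lattice without infinite chains forces equality, we conclude $a \wedge^{L'} b = a \wedge^L b$, and in particular $(a,b)$ is a modular pair in $L$ as well. Finally, in $L$ we have $p \leq a$ and $p \leq b$, so $p \leq a \wedge^L b = a \wedge^{L'} b$, i.e. $a \wedge^{L'} b \in C$, as desired.

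**Main obstacle.** The only delicate point is the comparison between meets computed in $L'$ and in $L$, and the realization that the modular-pair condition in $L'$ is exactly what rigidifies this comparison via the submodular inequality in $L$. If one is not careful one might try to verify modularity purely inside $L'$, but $L'$ need not even be semimodular as an abstract lattice once one forgets its embedding, so the argument genuinely must be routed through $L$; the absoluteness of rank and of joins for elements of the subgeometry is what makes this routing legitimate.
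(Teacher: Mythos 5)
Your overall strategy --- reduce everything to ranks computed in $L$ (using that ranks and joins of elements of $L'$ are absolute), then play the modular-pair equality in $L'$ against submodularity in $L$ to force the two meets to coincide, whence $p \leq a \wedge^{L'} b$ --- is the natural one, and the paper itself offers no proof of this proposition (it is treated as a routine piece of Crapo's theory, with only the converse cited to the literature). However, as written your argument contains two reversed inequalities. First, the comparison of meets is backwards: the meet computed in the subgeometry is \emph{smaller}, not larger, i.e. $a \wedge^{L'} b \leq a \wedge^{L} b$, since $a \wedge^{L'} b$ is \emph{some} lower bound of $a,b$ lying in $L$ while $a \wedge^{L} b$ is the greatest such; the paper's own example after Definition \ref{def_meet_subgeo} exhibits exactly this, with $(BC \wedge AD)^{L'} = 0 \leq E = (BCE \wedge ADE)^{L}$. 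Second, combining your displayed equation with submodularity in $L$ yields $r^L(a \wedge^L b) \leq r^L(a \wedge^{L'} b)$, not the reverse as you assert. The two errors cancel, and the corrected argument is sound: writing $c = a \wedge^{L'} b$ and $d = a \wedge^{L} b$, the modular-pair hypothesis gives $r^L(c) = r^L(a) + r^L(b) - r^L(a \vee b) \geq r^L(d)$ by submodularity, while $c \leq d$ gives $r^L(c) \leq r^L(d)$; hence $r^L(c) = r^L(d)$, and comparability plus equal finite height forces $c = d$, so $p \leq d = c$ and $c$ lies in the cut. So the skeleton survives, but only after swapping both directions consistently; as stated, the intermediate claims are false.

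Two smaller points. Your justification of rank absoluteness only yields $r^{L'}(a) \leq r^L(a)$ (chains of $L'$ are chains of $L$); for the converse, take a basis $B \subseteq M$ of points under $a$ (possible since every element of $L'$ is a join of points of $M$), which produces a chain of length $r^L(a)$ inside $L'$. Alternatively, note that $L'$ is canonically isomorphic to the lattice of flats of the restriction of the matroid of $L$ to $M$, from which absoluteness of rank is immediate. That observation also corrects your closing remark: $L'$ \emph{is} a geometric lattice (the proposition itself calls $(L', \leq)$ a geometry), so the reason the argument must pass through $L$ is not any failure of semimodularity of $L'$, but simply that meets, and hence modular pairs, computed in $L'$ and in $L$ may differ.
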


	More interestingly, also the converse of the proposition above is true.

	\begin{theorem}[Crapo \cite{crapo}]\label{one_point_2}  Let $C$ be a (non-trivial) modular cut of the geometric lattice $(L', \leq)$, and $p \not\in P(L')$. Then, modulo isomorphism of geometries, there is a unique geometry $(L, \leq)$ such that the following hold:
	\begin{enumerate}[i)]
	\item $P(L) = P(L') \cup \left\{ p \right\}$;
	\item $(L', \leq)$ is the subgeometry of $(L, \leq)$ generated by $P(L')$;
	\item $C = \left\{ x \in L' \, | \, p \leq x \right\}$.
	\end{enumerate}	
\end{theorem}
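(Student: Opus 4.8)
The plan is to construct the extension $(L, \leq)$ explicitly from $(L', \leq)$ and the modular cut $C$, and then verify the three required properties together with uniqueness. For the construction, the natural candidate for the underlying set is to take $L$ to be $L' \cup \{\, x \vee p : x \in L' \setminus C \,\}$, where the elements $x \vee p$ are new formal symbols (except that $1 \vee p$ should be identified with $1$, since $1 \in C$ automatically). Equivalently, thinking matroid-theoretically via the Birkhoff–Whitney correspondence, one defines a closure operator on $M \cup \{p\}$ (where $M = P(L')$) by declaring, for $A \subseteq M$, that $\mathrm{cl}(A \cup \{p\})$ equals $\mathrm{cl}_{L'}(A) \cup \{p\}$ if $\mathrm{cl}_{L'}(A) \notin C$, and equals the least element of $C$ above $\mathrm{cl}_{L'}(A)$ (together with $p$) if $\mathrm{cl}_{L'}(A) \in C$; on subsets of $M$ it agrees with $\mathrm{cl}_{L'}$. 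First I would check this is a simple matroid closure operator — the exchange property is exactly where modularity of $C$ is used.

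The key steps, in order, are: (1) define the order on $L$: on $L'$ it is the old order, we set $x \leq x \vee p$ for $x \in L' \setminus C$, and $x \vee p \leq y$ (resp.\ $\leq y \vee p$) iff $x \leq y$, with the caveat that if $y \in C$ then $x \vee p \leq y$ iff $x \leq y$; (2) verify $(L, \leq)$ is a lattice, computing joins and meets explicitly — the join of $x \vee p$ and $y$ is $(x \vee y) \vee p$ if $x \vee y \notin C$ and is $x \vee y$ otherwise, and here one must check associativity/well-definedness, which again invokes the modular-cut condition; (3) verify semimodularity and the point-lattice property, so that $(L,\leq)$ is geometric by Definition \ref{def_geom_lat}; (4) check that $p$ is an atom and $P(L) = M \cup \{p\}$, giving (i); (5) check that the $\vee$-subsemilattice generated by $M$ is all of $L'$ and carries the original order, giving (ii); (6) compute $\{ x \in L' : p \leq x \} = C$ directly from the definition of the order, giving (iii). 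For uniqueness, I would argue that in any geometry $(L, \leq)$ satisfying (i)–(iii), the modular cut recovered by Proposition \ref{one_point_1} is $C$, and then show that every element of $L$ is either an element of the subgeometry generated by $M$ (hence an element of $L'$, identified via (ii)) or is of the form $x \vee p$ with $x$ a join of points in $M$; the relations among these elements are then forced by (iii) together with semimodularity, so $(L,\leq)$ is determined up to an isomorphism fixing the points.

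The main obstacle I anticipate is step (2)–(3): showing that the proposed join and meet operations are well-defined and that the resulting structure is genuinely semimodular. The subtle point is that when one joins two "new" elements $x \vee p$ and $y \vee p$, or a new element with an old one, the answer depends on whether an intermediate join lands inside $C$ or not, and proving that the two defining properties of a modular cut (upward closure, and closure under meets of modular pairs) are exactly what is needed to make all these case distinctions cohere — in particular to rule out a "rank collapse" that would violate semimodularity — is the delicate combinatorial heart of the argument. This is precisely the content of Crapo's original theorem, so I would either reproduce his argument via the matroid closure formulation (where the single nontrivial verification is the exchange axiom) or cite \cite{crapo} for the lattice-theoretic bookkeeping; I would lean toward the closure-operator route since it localizes the use of modularity to one clean lemma.
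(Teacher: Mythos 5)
The paper itself does not prove this statement: it is Crapo's classical theorem and the proof is delegated to the literature (Aigner, Theorem 6.53), with the explicit description of $L' \oplus_C p$ recorded in the proposition that follows the theorem. Your proposal attempts the real construction, and it goes wrong at the very first step, in a way that the cited description is precisely designed to avoid. The domain $L' \cup \{\, x \vee p : x \in L' \setminus C \,\}$ is too large, and correspondingly the clause ``$\mathrm{cl}(A \cup \{p\}) = \mathrm{cl}_{L'}(A) \cup \{p\}$ whenever $\mathrm{cl}_{L'}(A) \notin C$'' is false. The correct dichotomy is governed by the collar $CO_{L'}(C)$, not by membership in $C$ alone: a flat $x \notin C$ contributes a new element $x \vee p$ only if, in addition, no element of $C$ covers $x$. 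Indeed, if $x \notin C$ but $x \lessdot y$ for some $y \in C$ (and the modular-cut condition shows such $y$ is unique: two distinct covers of $x$ lying in $C$ would be a modular pair with meet $x$, forcing $x \in C$), then in any geometry satisfying i)--iii) one has $p \leq y$, hence $x \vee p \leq y$, while $r(x \vee p) = r(x)+1 = r(y)$; so $x \vee p = y$ is an \emph{old} element. Keeping a new formal element $x \vee p$ in this situation creates the chain $x \lessdot x \vee p < y$, so $y$ acquires height $r(x)+2$ although maximal chains through $L'$ below $y$ have length $r(x)+1$: the Jordan--H\"older rank condition, and with it semimodularity, fails, and what you build is not a geometric lattice. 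In the closure formulation the same case is missed ($\mathrm{cl}(A \cup \{p\})$ must be $y \cup \{p\}$, not $\mathrm{cl}_{L'}(A) \cup \{p\}$, when $\mathrm{cl}_{L'}(A)$ is covered by $y \in C$), so the exchange verification you plan cannot succeed as set up.

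Two further slips in the same definition: the clause ``the least element of $C$ above $\mathrm{cl}_{L'}(A)$'' in the case $\mathrm{cl}_{L'}(A) \in C$ is vacuous (that least element is $\mathrm{cl}_{L'}(A)$ itself), and for flats outside $C$ a least element of $C$ above them need not exist at all -- what modularity of $C$ buys is only the uniqueness of a \emph{cover} in $C$; and the stipulation ``on subsets of $M$ it agrees with $\mathrm{cl}_{L'}$'' directly contradicts requirement iii), since for $\mathrm{cl}_{L'}(A) \in C$ the point $p$ must lie in the closure of $A$. Once the domain is corrected to $L' \cup \{\, a \vee p : a \in CO_{L'}(C) \,\}$ and the closure operator is adjusted accordingly, your overall plan (verify the lattice axioms, semimodularity and atomisticity, then prove uniqueness by showing that in any extension satisfying i)--iii) every element is either in $L'$ or of the form $x \vee p$ with $x$ in the collar) is the standard route and can be carried through; note that the uniqueness half also hinges on the identification $x \vee p = y$ for covered $x$, which is exactly the observation missing from your construction.
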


\begin{proof} See e.g. \cite[Theorem 6.53]{aigner}.
\end{proof}

	The geometry of the previous theorem will be denoted as $L' \oplus_C p = (L' \oplus_C p, \leq)$. We now give an explicit description of it. First a technical definition which will be handy. Given a geometric lattice $(L, \leq)$ and a modular cut $C$ of $L$, we define the {\em collar} of $C$ in $L$, for short $CO_L(C) = (CO_L(C), \leq)$, to be the subposet of $(L, \leq)$ consisting of those elements of $L$ which are not in $C$ and are not covered by an element of $C$.

	\begin{proposition} Let $(L', \leq)$, $C$ and $p$ as in the previous theorem. Then the geometric lattice $(L' \oplus_C p, \leq)$ is the lattice whose domain is the set $$L' \cup \left\{ a \vee p \, | \, a \in CO_{L'}(C)\right\},$$ and whose covering relation is defined according to the orders of the posets $(L', \leq)$ and $(CO_{L'}(C), \leq)$ and the following additional covering relations:
	\begin{enumerate}[i)]
	\item if $a \in CO_L(C)$, then $a \lessdot a \vee p$;
	\item if $a \in CO_L(C)$, $b \in C$ and $a \leq b$ with $r(b) = r(a) +2$, then $a \vee p \lessdot b$.
	\end{enumerate}
\end{proposition}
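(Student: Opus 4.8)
The plan is to start from the unique geometry $(L,\leq)=(L'\oplus_C p,\leq)$ produced by Theorem~\ref{one_point_2} and to read off its elements and covers from the three defining properties: $P(L)=P(L')\cup\{p\}$, $L'$ is the subgeometry of $L$ on $P(L')$, and $C=\{x\in L':p\leq x\}$. The key preliminary observation, which I would isolate first, is that the subgeometry $(L',\leq)$ is precisely the restriction of the matroid of $L$ to $P(L')$; concretely, $\bigvee^{L'}A=\bigvee^{L}A$ and $\mathrm{cl}_{L'}(A)=\mathrm{cl}_L(A)\cap P(L')$ for $A\subseteq P(L')$, so a subset of $P(L')$ is independent in $L$ iff it is independent in $L'$, and therefore $r_L$ and $r_{L'}$ agree on $L'$. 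Both lattices are graded, so I will repeatedly use that $x\lessdot y$ iff $x<y$ and $r(y)=r(x)+1$.

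\emph{The domain.} Every element of $L$ is the join of the atoms below it; if all of them lie in $P(L')$ the element lies in $L'$, and otherwise it equals $a\vee p$ with $a:=\bigvee^{L}\{q\in P(L'):q\leq a\vee p\}\in L'$. For $a\in L'$ I would check: if $a\in C$ then $a\vee p=a$; if $a\notin C$ then $0\lessdot p$ and semimodularity give $a\lessdot a\vee p$, hence $r_L(a\vee p)=r_{L'}(a)+1$, and $a\vee p\in L'$ exactly when $a$ is covered in $L'$ by some $b\in C$ (in which case $a\vee p=b$). Hence the elements of $L$ outside $L'$ are exactly the $a\vee p$ for $a\in CO_{L'}(C)$, and $a\mapsto a\vee p$ is injective on $CO_{L'}(C)$: from $a\vee p=a'\vee p$ one gets $a\vee^{L}a'=a\vee p$, putting the element back into $L'$, which is a contradiction unless $a=a'$. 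Along the way one records $a\leq b\iff a\vee p\leq b\vee p$ (the $P(L')$-atoms below $a\vee p$ are exactly those below $a$) and, for $c\in L'$, $a\vee p\leq c\iff a\leq c$ and $c\in C$.

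\emph{The covers.} It then remains to sort the covering pairs $x\lessdot_L y$ of $L$ into the three families in the statement (I note that the $CO_L(C)$ of clauses (i)--(ii) should read $CO_{L'}(C)$). If $x,y\in L'$: any $z\in L\setminus L'$ with $x<z<y$ would have the form $c\vee p$ with $x\leq c<y$ and $y\in C$, which is incompatible with $x\lessdot_{L'}y$, so covers of $L'$ stay covers of $L$ and conversely. If $x=a\vee p$ and $y=b\vee p$ are both new, then $a\vee p\leq b\vee p\iff a\leq b$ together with the rank formula gives $a\vee p\lessdot_L b\vee p\iff a\lessdot_{L'}b$, which for collar elements is the order of the subposet $(CO_{L'}(C),\leq)$. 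If only $y$ is new, $y=a\vee p$ covers a unique element of $L'$, namely $a$ --- this is clause (i). If only $x$ is new, $x=a\vee p$ and $y=b\in L'$, then $p\leq b$ forces $b\in C$ and $a\leq b$, while $r_L(b)-r_L(a\vee p)=r_{L'}(b)-r_{L'}(a)-1$, so the cover occurs iff $r_{L'}(b)=r_{L'}(a)+2$ --- this is clause (ii). Since a lattice without infinite chains is determined by its covering relation, this identifies $(L'\oplus_C p,\leq)$ with the poset described.

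\emph{Main obstacle.} I expect the real work to be the preliminary identification of the subgeometry with the matroid restriction and the consequent equality $r_L|_{L'}=r_{L'}$; granting that, the rest is a routine but slightly fiddly case analysis resting on the fact that $p$ behaves like a point of every flat in $C$ and like a generic point otherwise, and that a new element $a\vee p$ is determined by its trace $a$ on $P(L')$. The only other point that needs care is justifying the informal phrase ``according to the orders of $(L',\leq)$ and $(CO_{L'}(C),\leq)$'', i.e. checking that no element of $L'$ slips strictly between two comparable members of $L'$ and no new element slips strictly between two comparable new elements --- which is exactly what the case analysis above is designed to exclude.
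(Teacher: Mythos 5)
Your proposal is correct, but it does not follow the paper's route, because the paper gives no argument at all here: it simply cites \cite{aigner} (Proposition 6.54), where the logical order is the reverse of yours --- the poset $L' \cup \{a \vee p \mid a \in CO_{L'}(C)\}$ with exactly these covers is constructed first and then verified to be a geometric lattice, and that construction is what proves the existence half of Theorem \ref{one_point_2}. You instead take Theorem \ref{one_point_2} as a black box and read the description off the three defining properties of the unique extension; this is legitimate and not circular, and your key steps are right: the subgeometry on $P(L')$ is the restriction of the matroid of $L$, so $r_L$ and $r_{L'}$ agree on $L'$; a new element is determined by its trace $a=\bigvee\{q\in P(L'): q\leq a\vee p\}$, which lies in $CO_{L'}(C)$; and gradedness reduces covering to ``rank difference one'', which drives the four-case analysis. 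What your route buys is brevity; what Aigner's buys is an actual proof of the theorem you are quoting. Two spots deserve one extra line each. First, in the old--old case, from $x\leq c<y$ with $c\in CO_{L'}(C)$ you must still exclude $c=x$: this follows since $x\lessdot_{L'} y$ with $y\in C$ would mean $x$ is covered by an element of $C$, so $x\notin CO_{L'}(C)$. Second, matching ``the order of the subposet $(CO_{L'}(C),\leq)$'' literally requires knowing that covering inside that subposet agrees with covering in $L'$; this holds because the collar is downward closed and absorbs intermediate elements: if $c\leq b$ with $b\in CO_{L'}(C)$ and $c\lessdot d\in C$, then $d\wedge b=c$ (since $d\leq b$ would force $b\in C$), and semimodularity gives $b\lessdot b\vee d\in C$, contradicting $b\in CO_{L'}(C)$. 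With these two remarks added, your case analysis is complete, and your observation that $CO_L(C)$ in clauses (i)--(ii) should read $CO_{L'}(C)$ is indeed a typo in the statement.
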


\begin{proof} See e.g. \cite[Proposition 6.54]{aigner}.
\end{proof}

	\begin{example}\label{affine_plane_order_three} The affine plane of order two, denoted as $AF(2)$, can be obtained from $FG_3$ (as in Example \ref{FG_3}) with a principal one-point extension, namely the one determined by the modular cut $\left\{ ABC \right\}$. 
\begin{figure}[ht]
	\begin{center}
	\begin{tikzpicture}
\matrix (a) [matrix of math nodes, column sep=0.4cm, row sep=0.3cm]{
   &  & & ABCD \\
AB & AC & BC & & AD & BD & CD \\
& A  & B & & C & D \\
& & & \emptyset \\};

\end{tikzpicture}
\end{center}  \caption{Hasse diagrams of $AF(2)$.}
\end{figure}
\end{example}	
	
	\begin{example}\label{example_Fano} The Fano plane, denoted as $PG(2, 2)$, can be obtained from $AF(2)$ with three non-principal one-point extensions. Starting from $AF(2)$ as in Example \ref{affine_plane_order_three}, first put the point $E$ into the lines $AB$ and $CD$, then $F$ into the lines $AC$ and $BD$ and finally $G$ into $BC$, $AD$ and $FE$.
\begin{figure}[ht]
	\begin{center}
	\begin{tikzpicture}
\matrix (a) [matrix of math nodes, column sep=0.4cm, row sep=0.3cm]{
   &  & & ABCDEFG \\
ABE & ACF & BCG & ADG & BDF & CDE & FEG \\
A  & B  & C & D & E & F & G \\
& & & \emptyset \\};
\end{tikzpicture}
\end{center}  \caption{Hasse diagrams of $PG(2, 2)$.}\label{Fano}
\end{figure}
\end{example}

	\begin{lemma}\label{princ_pres_ext} Let $C$ be a (non-trivial) modular cut of the geometric lattice $(L, \leq)$, and $p \not\in P(L)$. Then, $(L, \leq)$ is a $\wedge$-subgeometry of $(L \oplus_C p, \leq)$ if and only if $C$ is principal.	
\end{lemma}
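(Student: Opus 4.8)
The plan is to read both directions off the explicit description of $L \oplus_C p$. By Theorem \ref{one_point_2}(ii) the lattice $L$ is a subgeometry of $L \oplus_C p$, so the content of the statement is whether the two meet operations agree on $L$. We use three elementary facts about $L \oplus_C p$: by Theorem \ref{one_point_2}(iii), for $x \in L$ we have $p \le x$ in $L \oplus_C p$ iff $x \in C$; each adjoined element $c \vee p$ (with $c \in CO_L(C)$) is the join of $c$ and $p$ in $L \oplus_C p$, so that for $a \in L$ one has $c \vee p \le a$ iff $a \in C$ and $c \le a$; and the order of $L$ is that induced from $L \oplus_C p$. Note in particular that $C$ is nonempty, since $p \le 1$ forces $1 \in C$.

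Assume first that $C$ is principal, with minimum $a_0$. Fix $a, b \in L$ and put $e = (a \wedge b)^{L}$. Since $e$ is still a lower bound of $\{a, b\}$ in $L \oplus_C p$, we have $(a \wedge b)^{L \oplus_C p} \ge e$; so it suffices to show $(a \wedge b)^{L \oplus_C p} \in L$, as then it is an element of $L$ below both $a$ and $b$, hence $\le e$. If it were not in $L$ it would equal some $c \vee p$ with $c \in CO_L(C)$, and then $a, b \in C$, whence $a_0 \le a \wedge b = e$, so $e \in C$ and $p \le e$. But then every adjoined element $c' \vee p$ that is a lower bound of $\{a, b\}$ satisfies $c' \le e$ and $p \le e$, hence $c' \vee p \le e$; together with the fact that $e \in L$ is a lower bound of $\{a,b\}$, this forces $e = (a \wedge b)^{L \oplus_C p}$, contradicting $(a \wedge b)^{L \oplus_C p} \notin L$. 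So $(a \wedge b)^{L \oplus_C p} = (a \wedge b)^{L}$ for all $a, b \in L$, and $L$ is a $\wedge$-subgeometry of $L \oplus_C p$.

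For the converse we argue by contraposition. Suppose $C$ is not principal. Since $C$ is upward closed, nonempty, and every chain in $L$ is finite, each element of $C$ sits above a minimal element of $C$; because $C$ has no least element it therefore has two distinct minimal elements $a$ and $b$. From minimality and $a \ne b$ we get $(a \wedge b)^{L} < a$, $(a \wedge b)^{L} < b$, and $(a \wedge b)^{L} \notin C$. Hence in $L \oplus_C p$ we have $p \le a$ and $p \le b$ but $p \not\le (a \wedge b)^{L}$; since $(a \wedge b)^{L}$ is itself a lower bound of $\{a, b\}$ in $L \oplus_C p$, the meet $(a \wedge b)^{L \oplus_C p}$ lies above both $(a \wedge b)^{L}$ and $p$, hence strictly above $(a \wedge b)^{L}$. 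Thus $(a \wedge b)^{L \oplus_C p} \ne (a \wedge b)^{L}$ and $L$ is not a $\wedge$-subgeometry of $L \oplus_C p$.

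The order-theoretic facts about $L \oplus_C p$ used above are immediate from the explicit description preceding Example \ref{affine_plane_order_three}, so the only step requiring care is the contrapositive: one must check that a non-principal modular cut really has at least two minimal elements -- this is exactly where finiteness of chains in $L$ is used -- and that the new point $p$ by itself already witnesses the failure of meet-preservation, so that the more delicate adjoined elements $c \vee p$ need not be analysed.
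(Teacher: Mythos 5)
Your proof is correct: both directions check out, including the two points that genuinely need care (that the meet of $a,b\in L$ in $L\oplus_C p$ can only drop out of $L$ when $a,b\in C$, and that a non-principal modular cut has two distinct minimal elements by finiteness of chains, whose $L$-meet lies outside $C$ while $p$ lies below both). The paper itself dismisses Lemma \ref{princ_pres_ext} as ``Routine,'' so there is no argument to compare against; your writeup simply supplies, via the explicit description of $L\oplus_C p$, exactly the kind of verification the authors had in mind.
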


\begin{proof} Routine.
\end{proof}

	If the modular cut $C$ is principal generated by $a \in L$, we denote $L \oplus_C p$ simply as $L \oplus_a p$. Principal extensions $L \oplus_a p$ are also referred to as adding a point freely under the closed set, a.k.a. flat, $a$. In the case of $a =1$ one simply says $L \oplus_a p$ has been obtained by adding a generic point to $L$.
	
	We conclude this section with a basic lemma which will be crucial in Section \ref{plane}.

	\begin{lemma}\label{switchining} Let $(L, \leq)$ be a geometric lattice and $a, b \in L$, with $r(a), r(b) \geq 2$. Then 
	\[ (L \oplus_a p_0) \oplus_b p_1 = (L \oplus_b p_1) \oplus_a p_0.\]
\end{lemma}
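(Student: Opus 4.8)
The plan is to prove this by analyzing the explicit description of one-point extensions given in the Proposition following Theorem \ref{one_point_2}, specializing to principal modular cuts. Since $a, b \in L$ with $r(a), r(b) \geq 2$, the cuts $C_a = \{x \in L \mid a \leq x\}$ and $C_b = \{x \in L \mid b \leq x\}$ are non-trivial modular cuts (they are principal, hence cuts, and principal cuts are automatically modular). By Lemma \ref{princ_pres_ext}, $L$ is a $\wedge$-subgeometry of $L \oplus_a p_0$, so the meet structure of $L$ is preserved; this is what makes the iterated construction well-behaved.

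First I would compute the underlying set of $(L \oplus_a p_0) \oplus_b p_1$. The domain of $L \oplus_a p_0$ is $L \cup \{x \vee p_0 \mid x \in CO_L(C_a)\}$. To form the next extension we need the modular cut of $L \oplus_a p_0$ generated by $b$; since $b \in L$ and the embedding $L \hookrightarrow L \oplus_a p_0$ preserves meets and the order, this is $C_b' = \{y \in L \oplus_a p_0 \mid b \leq y\}$, and I would check that this consists of $\{x \in L \mid b \leq x\}$ together with those elements $x \vee p_0$ with $x \in CO_L(C_a)$ and $b \leq x \vee p_0$. The key computational point is to identify, inside $L \oplus_a p_0$, exactly which elements lie above $b$ and which lie in the collar, using that meets and ranks in $L$ are unchanged and that ranks of the new elements $x \vee p_0$ are $r(x) + 1$. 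Then the domain of $(L \oplus_a p_0) \oplus_b p_1$ is obtained by adjoining $\{y \vee p_1 \mid y \in CO_{L \oplus_a p_0}(C_b')\}$. By symmetry I would carry out the same computation for $(L \oplus_b p_1) \oplus_a p_0$, and then exhibit the evident bijection fixing $L$ pointwise and sending $x \vee p_0 \mapsto x \vee p_0$, $y \vee p_1 \mapsto y \vee p_1$, and the "doubly new" elements $z \vee p_0 \vee p_1$ to each other. Finally I would verify that this bijection preserves the covering relation on both sides by going through cases i) and ii) of the explicit Proposition; since covering relations determine the lattice order (we are in finite-rank lattices with no infinite chains) and we are identifying lattices up to isomorphism fixing points, this suffices.

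The main obstacle I expect is the bookkeeping around the collar $CO_{L \oplus_a p_0}(C_b')$: an element of the form $x \vee p_0$ may or may not be in this collar depending on whether it is covered by an element of $C_b'$, and I need to see that after adding $p_1$ the resulting elements $(x \vee p_0) \vee p_1$ match up with what one gets on the other side as $(x \vee p_1) \vee p_0$. This requires knowing that for $x \in L$, the element $x$ is in the relevant collar of $L \oplus_b p_1$ iff $x \vee p_0$ is in the collar of $L \oplus_a p_0$ relative to $b$ — essentially that the two extension operations, restricted to their interaction near $a$, $b$, and $a \vee b$, commute. Because both $a$ and $b$ live in the original lattice $L$ where all meets, joins, and ranks are fixed, I expect these conditions to unwind to symmetric statements purely about $L$, $a$, $b$, $a \vee b$, and $a \wedge b$, after which the verification is routine case-checking rather than anything conceptually deep. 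One should also handle the degenerate sub-cases, e.g. when $a \leq b$ or $b \leq a$, or when $a \vee b = 1$, separately but briefly, as the collars degenerate there.
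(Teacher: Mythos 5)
Your outline is correct, but note that the paper does not actually prove Lemma \ref{switchining}: it simply cites \cite[Section 7.2, Exercise 8]{oxley}, so your proposal supplies an argument where the paper supplies a reference. The route you choose -- unwinding the explicit description of $L' \oplus_C p$ for the two principal cuts and matching domains and covering relations -- is the standard direct verification and does go through; the symmetry conditions you anticipate really do reduce to statements about flats of $L$ above a given $x$ of ranks $r(x)+1$ and $r(x)+2$ containing $a$ and/or $b$, and these are checked to be symmetric in $a,b$ by a short argument. One point worth making explicit, which keeps your ``routine case-checking'' honest: because the collar $CO_L(C_a)$ consists of elements that are \emph{not covered} by any element of $C_a$, no covering pair of $L$ is ever subdivided by a new element $x \vee p_0$; hence the Hasse diagram of $L$ persists inside $L \oplus_a p_0$, and one gets cleanly that $L \cap CO_{L\oplus_a p_0}(C_b') = CO_L(C_b)$ (for old elements, $b \leq x \vee p_0$ forces $b \leq x$, since the point set of $x \vee p_0$ is $P(x)\cup\{p_0\}$). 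Without this observation the bookkeeping you flag as the main obstacle would be considerably murkier. A slightly cleaner variant of the same proof, worth considering, is to avoid covering relations altogether: since the paper identifies geometries via point-fixing isomorphisms, it suffices to compare closure operators (equivalently flats as point sets) on $P(L)\cup\{p_0,p_1\}$, using that adding $p$ freely under the flat $a$ means $p \in \mathrm{cl}(X)$ exactly when $a \leq \bigvee X$; the commutativity then becomes a symmetric rank/closure computation, with the hypothesis $r(a), r(b) \geq 2$ guaranteeing both cuts stay non-trivial at each stage.
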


\begin{proof}  See e.g. \cite[Section 7.2, Exercise 8]{oxley}.
\end{proof}

\section{Abstract Elementary Classes}

	In this section we introduce the basics of abstract elementary classes (see e.g. \cite{shelah_abstr_ele_cla} and \cite{jarden}). This machinery will be used in Section \ref{plane} in order to study various classes of geometric lattices. As usual in this context, type means Galois type. Given a class $\mathbf{K}$ of structures in the vocabulary $L$, we denote by $\leq$ the $L$-submodel relation on structures in $\mathbf{K}$.

\begin{definition}[Abstract Elementary Class \cite{shelah_abstr_ele_cla}]\label{def_indep_first_order}  Let $\mathbf{K}$ be a class of structures in the vocabulary $L$ and $\preccurlyeq$ a binary relation on $\mathbf{K}$. We say that $(\mathbf{K}, \preccurlyeq)$ is an {\em abstract elementary class} ($\mathrm{AEC}$) if the following conditions are satisfied.
		\begin{enumerate}[(1)]
			\item $\mathbf{K}$ and $\preccurlyeq$ are closed under isomorphisms.
			\item If $\mathcal{A} \preccurlyeq \mathcal{B}$, then $\mathcal{A}$ is an $L$-submodel of $\mathcal{B}$ ($\mathcal{A} \leq \mathcal{B}$).
			\item The relation $\preccurlyeq$ is a partial order on $\mathbf{K}$.
			\item If $(\mathcal{A}_i)_{i < \delta}$ is an increasing continuous $\preccurlyeq$-chain, then:
			\begin{enumerate}[({4.}1)]
				\item $\bigcup_{i < \delta} \mathcal{A}_i \in \mathbf{K}$;
				\item for each $j < \delta$, $\mathcal{A}_j \preccurlyeq \bigcup_{i < \delta} \mathcal{A}_i$;
				\item if each $\mathcal{A}_j \preccurlyeq \mathcal{B}$, then $\bigcup_{i < \delta} \mathcal{A}_i \preccurlyeq \mathcal{B}$ \; (Smoothness Axiom).
	\end{enumerate}
			\item If $\mathcal{A}, \mathcal{B}, \mathcal{C} \in \mathbf{K}$, $\mathcal{A} \preccurlyeq \mathcal{C}$, $\mathcal{B} \preccurlyeq \mathcal{C}$ and $\mathcal{A} \leq \mathcal{B}$, then $\mathcal{A} \preccurlyeq \mathcal{B}$ \; (Coherence Axiom).
			\item There is a L\"owenheim-Skolem number $\mathrm{LS}(\mathbf{K}, \preccurlyeq)$ such that if $\mathcal{A} \in \mathbf{K}$ and $B \subseteq A$, then there is $\mathcal{C} \in \mathbf{K}$ such that $B \subseteq C$, $\mathcal{C} \preccurlyeq \mathcal{A}$ and $|C| \leq |B| + |L| + \mathrm{LS}(\mathbf{K}, \preccurlyeq)$ \; (Existence of LS-number).
	
\end{enumerate}
		
\end{definition}

	\begin{definition} If $\mathcal{A}, \mathcal{B} \in \mathbf{K}$ and $f: \mathcal{A} \rightarrow \mathcal{B}$ is an embedding such that $f(\mathcal{A}) \preccurlyeq \mathcal{B}$, then we say that $f$ is a $\preccurlyeq$-embedding.
	
\end{definition}
	
	Let $\lambda$ be a cardinal. We let $\mathbf{K}_{\lambda} = \left\{ \mathcal{A} \in \mathbf{K} \; | \; |A| = \lambda \right\}$.

	\begin{definition}\label{def_AP} Let $(\mathbf{K}, \preccurlyeq)$ be an $\mathrm{AEC}$. 
		\begin{enumerate}[(i)]
			\item We say that $(\mathbf{K}, \preccurlyeq)$ has the {\em amalgamation property} $(\mathrm{AP})$ if for any $\mathcal{A}, \mathcal{B}_0, \mathcal{B}_1 \in \mathbf{K}$ with $\mathcal{A} \preccurlyeq \mathcal{B}_i$ for $i < 2$, there are $\mathcal{C} \in \mathbf{K}$ and $\preccurlyeq$-embeddings $f_i: \mathcal{B}_i \rightarrow \mathcal{C}$ for $i < 2$, such that $f_0 \restriction A = f_1 \restriction A$.
			\item We say that $(\mathbf{K}, \preccurlyeq)$ has the {\em joint embedding property} $(\mathrm{JEP})$ if for any $\mathcal{B}_0, \mathcal{B}_1 \in \mathbf{K}$ there are $\mathcal{C} \in \mathbf{K}$ and $\preccurlyeq$-embeddings $f_i: \mathcal{B}_i \rightarrow \mathcal{C}$ for $i < 2$.
			\item We say that $(\mathbf{K}, \preccurlyeq)$ has arbitrarily large models $(\mathrm{ALM})$ if for every $\lambda \geq \mathrm{LS}(\mathbf{K}, \preccurlyeq)$, $\mathbf{K}_{\lambda} \neq \emptyset$.
\end{enumerate}
\end{definition}

	We say that $(\mathbf{K}, \preccurlyeq)$ is an {\em almost} $\mathrm{AEC}$ if it satisfies all the $\mathrm{AEC}$ axioms except possibly the Smoothness Axiom. Despite the failure of the Smoothness Axiom, for any almost $\mathrm{AEC}$, say $(\mathbf{K}, \preccurlyeq)$, with $\mathrm{AP}$, $\mathrm{JEP}$ and $\mathrm{ALM}$, it is still possible to construct a monster model $\mathfrak{M} = \mathfrak{M}(\mathbf{K}, \preccurlyeq)$ for $(\mathbf{K}, \preccurlyeq)$, i.e. a $\kappa$-model homogeneous and $\kappa$-universal (for $\kappa$ large enough) structure in $\mathbf{K}$.
The construction of the monster model $\mathfrak{M}(\mathbf{K}, \preccurlyeq)$, for $(\mathbf{K}, \preccurlyeq)$ an almost $\mathrm{AEC}$, is just as in the case of $\mathrm{AEC}$s. In fact, a careful inspection of the average construction of $\mathfrak{M}$ shows that the Smoothness Axiom is not needed.	

	Taking inspiration from Shelah's notion of a good $\lambda$-frame \cite[Definition 2.1.1]{jarden}, we now define a notion of an independence calculus $(\mathbf{K}, \preccurlyeq, \pureindep)$, for $(\mathbf{K}, \preccurlyeq)$ an almost $\mathrm{AEC}$  with $\mathrm{AP}$, $\mathrm{JEP}$ and $\mathrm{ALM}$. 
%
%
	In Section \ref{plane} we will define a particular class of planes $\mathbf{K} = \mathbf{K}^3$ and a notion of embedding $\preccurlyeq$, so that $(\mathbf{K}, \preccurlyeq)$ is an almost $\mathrm{AEC}$ with $\mathrm{AP}$, $\mathrm{JEP}$ and $\mathrm{ALM}$, and then show that it admits an independence calculus $(\mathbf{K}, \preccurlyeq, \pureindep)$.

		\begin{definition}\label{frame} We say that $(\mathbf{K}, \preccurlyeq, \pureindep)$ is an independence calculus if the following are satisfied.
\begin{enumerate}[(1)]
\item \begin{enumerate}[(a)]
	\item $(\mathbf{K}, \preccurlyeq)$ is an almost $\mathrm{AEC}$.
\end{enumerate}
    \item \begin{enumerate}[(a)]
	\item $(\mathbf{K}, \preccurlyeq)$ satisfies the joint embedding property.
	\item $(\mathbf{K}, \preccurlyeq)$ satisfies the amalgamation property.
	\item $(\mathbf{K}, \preccurlyeq)$ has arbitrarily large models.
\end{enumerate}
	\item The relation $\pureindep$ satisfies the following axioms. 
	\begin{enumerate}[(a)]
	\item $\pureindep$ is a set of triples $(a, \mathcal{A}_0,  \mathcal{A}_1)$ such that $\mathcal{A}_i \preccurlyeq \mathfrak{M}$, for $n = 0, 1$, and it respects isomorphisms, i.e. if $a \pureindep[\mathcal{A}_0] \mathcal{A}_1$ and $f \in \mathrm{Aut}(\mathfrak{M})$, then $f(a) \pureindep[f(\mathcal{A}_0)] f(\mathcal{A}_1)$.
	\item Symmetry. If $\mathcal{A}_0 \preccurlyeq \mathcal{A}_1 \preccurlyeq \mathfrak{M}$, $b \pureindep[\mathcal{A}_0] \mathcal{A}_1$ and $a \in A_1$, then there exists $\mathcal{A}_0 \preccurlyeq \mathcal{A}_2$ such that $b \in A_2$ and $a \pureindep[\mathcal{A}_0] \mathcal{A}_2$.
	\item Monotonicity. If $\mathcal{A} \preccurlyeq \mathcal{A}' \preccurlyeq \mathcal{B}' \preccurlyeq \mathcal{B} \preccurlyeq \mathfrak{M}$, and $a \pureindep[\mathcal{A}] \mathcal{B}$, then $a \pureindep[\mathcal{A}'] \mathcal{B}'$.
	\item Local Character. If $\delta$ is limit, $(\mathcal{A}_i)_{i \leq \delta}$ is an increasing continuous $\preccurlyeq$-chain and $\mathcal{A}_{\delta} \preccurlyeq \mathfrak{M}$, then there exists $\alpha < \delta$ such that $a \pureindep[\mathcal{A}_{\alpha}] \mathcal{A}_{\delta}$.
	\item Existence of non-forking extension. If $\mathcal{A} \preccurlyeq \mathcal{B} \preccurlyeq \mathfrak{M}$ and $a \in \mathfrak{M}^{< \omega}$, then there exists a $q \in S(\mathcal{B})$ such that $q \restriction \mathcal{A} = \mathrm{tp}(a/\mathcal{A})$ and for some $b \models q$ we have that $b \pureindep[\mathcal{A}] \mathcal{B}$.
	\item Uniqueness of the non-forking extension. If $\mathcal{A} \preccurlyeq \mathcal{B} \preccurlyeq \mathfrak{M}$, $a \pureindep[\mathcal{A}] \mathcal{B}$, $b \pureindep[\mathcal{A}] \mathcal{B}$ and $\mathrm{tp}(a/\mathcal{A}) = \mathrm{tp}(b/\mathcal{A})$, then $\mathrm{tp}(a/\mathcal{B}) = \mathrm{tp}(b/\mathcal{B})$.
	\item Continuity. If $\delta$ is limit, $(\mathcal{A}_i)_{i \leq \delta}$ is an increasing continuous $\preccurlyeq$-chain, $\mathcal{A}_{\delta} \preccurlyeq \mathfrak{M}$ and $a$, $(a_i)_{i < \delta}$ are such that $a_i \pureindep[\mathcal{A}_0] \mathcal{A}_i$ and $\mathrm{tp}(a_i/\mathcal{A}_i) = \mathrm{tp}(a/\mathcal{A}_i)$, then $a \pureindep[\mathcal{A}_0] \mathcal{A}_{\delta}$.
\end{enumerate}
\end{enumerate}
\end{definition}

	It is a well-known fact that good frames and independence calculi are a far-reaching generalization of the theory of linear independence. We conclude this section with a standard property of independence calculi which will be relevant in the proof of Theorem \ref{good_frame_th}.
	
	\begin{proposition}\label{transitivity}[Transitivity] Suppose that $(\mathbf{K}, \preccurlyeq, \pureindep)$ satisfies (1), (2) and (3)(a)-(f) of Definition \ref{frame} (i.e. everything except possibly (3)(g)), and let $\mathcal{A}_0 \preccurlyeq \mathcal{A}_1 \preccurlyeq \mathcal{A}_2 \preccurlyeq \mathfrak{M}$. Then,  $a \pureindep[\mathcal{A}_0] \mathcal{A}_1$ and $a \pureindep[\mathcal{A}_1] \mathcal{A}_2$ implies $a \pureindep[\mathcal{A}_0] \mathcal{A}_2$.
\end{proposition}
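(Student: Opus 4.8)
The plan is to derive Transitivity purely formally from the listed axioms, following the classical argument for good frames; no combinatorics of geometric lattices is needed. First I would record that the stated Monotonicity axiom (3)(c) subsumes \emph{base monotonicity}: taking $\mathcal{B}' = \mathcal{B}$ in (3)(c), from $a \pureindep[\mathcal{A}] \mathcal{B}$ and $\mathcal{A} \preccurlyeq \mathcal{A}' \preccurlyeq \mathcal{B} \preccurlyeq \mathfrak{M}$ one gets $a \pureindep[\mathcal{A}'] \mathcal{B}$. This will be used once in the form $\mathcal{A}_0 \preccurlyeq \mathcal{A}_1 \preccurlyeq \mathcal{A}_2$.

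Next I would invoke Existence of the non-forking extension (3)(e) for $\mathcal{A}_0 \preccurlyeq \mathcal{A}_2$ and the given $a$, producing $q \in S(\mathcal{A}_2)$ with $q \restriction \mathcal{A}_0 = \mathrm{tp}(a/\mathcal{A}_0)$ and some $b \models q$ with $b \pureindep[\mathcal{A}_0] \mathcal{A}_2$; in particular $\mathrm{tp}(b/\mathcal{A}_0) = \mathrm{tp}(a/\mathcal{A}_0)$. By Monotonicity (3)(c), $b \pureindep[\mathcal{A}_0] \mathcal{A}_1$, and since also $a \pureindep[\mathcal{A}_0] \mathcal{A}_1$ with the same type over $\mathcal{A}_0$, Uniqueness of the non-forking extension (3)(f) gives $\mathrm{tp}(a/\mathcal{A}_1) = \mathrm{tp}(b/\mathcal{A}_1)$. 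Now apply base monotonicity to $b \pureindep[\mathcal{A}_0] \mathcal{A}_2$ to obtain $b \pureindep[\mathcal{A}_1] \mathcal{A}_2$; combined with the hypothesis $a \pureindep[\mathcal{A}_1] \mathcal{A}_2$ and $\mathrm{tp}(a/\mathcal{A}_1) = \mathrm{tp}(b/\mathcal{A}_1)$, a second application of Uniqueness (3)(f) yields $\mathrm{tp}(a/\mathcal{A}_2) = \mathrm{tp}(b/\mathcal{A}_2)$.

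To finish, I would use model homogeneity of $\mathfrak{M}$: since $\mathrm{tp}(a/\mathcal{A}_2) = \mathrm{tp}(b/\mathcal{A}_2)$, there is $f \in \mathrm{Aut}(\mathfrak{M})$ with $f \restriction A_2 = \mathrm{id}$ and $f(b) = a$. Applying the isomorphism-invariance clause (3)(a) to $b \pureindep[\mathcal{A}_0] \mathcal{A}_2$ gives $f(b) \pureindep[f(\mathcal{A}_0)] f(\mathcal{A}_2)$, that is $a \pureindep[\mathcal{A}_0] \mathcal{A}_2$, which is the desired conclusion. The whole argument is bookkeeping with the axioms; the only step that deserves a second look is this final transfer, where one converts an equality of Galois types into an honest automorphism of the monster model — legitimate precisely because $\mathcal{A}_0 \preccurlyeq \mathcal{A}_1 \preccurlyeq \mathcal{A}_2 \preccurlyeq \mathfrak{M}$, so that all three independence assertions are genuine instances of $\pureindep$ and the monster model construction of Section~\ref{plane} is available.
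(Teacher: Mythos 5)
Your proof is correct: existence (3)(e) produces a non-forking "copy" $b$ of $a$ over $\mathcal{A}_0$ up to $\mathcal{A}_2$, two applications of uniqueness (3)(f) combined with monotonicity/base monotonicity from (3)(c) give $\mathrm{tp}(a/\mathcal{A}_2)=\mathrm{tp}(b/\mathcal{A}_2)$, and invariance (3)(a) plus an automorphism of $\mathfrak{M}$ over $\mathcal{A}_2$ transfers $b \pureindep[\mathcal{A}_0]\mathcal{A}_2$ to $a$ — all steps are legitimate instances of the stated axioms, and the final Galois-type-to-automorphism step is exactly how types are handled in this monster-model setting. This is the same standard argument the paper delegates to its citation of Shelah [II, Claim 2.18], so there is nothing to add beyond noting that the monster-model construction you invoke is the one for almost AECs from Section 3, not Section 5.
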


	\begin{proof} See \cite[II, Claim 2.18]{shelah_abstr_ele_cla}.
\end{proof}

\section{The Random Plane}\label{random_plane}

	As mentioned in the introduction, our starting questions were questions of amalgamation and stability. We asked: does the class of geometric lattices of a fixed rank $n$ have the amalgamation property in the vocabulary $L' = \left\{ 0, 1, \vee \right\}$? Does it have it in the vocabulary $L = \left\{ 0, 1, \vee, \wedge \right\}$? I.e. does this class of structures give rise to an amalgamation class with respect to the notion of subgeometry and $\wedge$-subgeometry, respectively? In this section we see that the answer to the first question is no, and we give a positive answer to the second question for $n = 3$ (i.e. planes). We then conclude that the class $(\mathbf{K}^{3}_{0}, \preccurlyeq_L)$ of geometric lattices of rank $3$ in the vocabulary $L$ is an $\mathrm{AEC}$ with $\mathrm{AP}$, $\mathrm{JEP}$ and $\mathrm{ALM}$. Finally, we show that $(\mathbf{K}^{3}_{0}, \preccurlyeq_L)$ is unstable.

	We now give an example of the failure of amalgamation for the class of geometric lattices of rank $n = 3$ in the vocabulary $L' = \left\{ 0, 1, \vee \right\}$. The example is taken from \cite[Example 7.2.3]{oxley}. Of course, the counterexample can be adapted so that we have failure for every $n \geq 3$.

\begin{example}\label{failure_amalgamation} Let $\mathcal{A}$ and $\mathcal{B}$ be the planes in Figure \ref{fail_amal} (lines of $\mathcal{A}$ in the first line, points of $\mathcal{A}$ in the second line, lines of $\mathcal{B}$ in the third line and points of $\mathcal{B}$ in the fourth line), and $\mathcal{C} = \langle A, B, C, D, E, F \rangle_{\mathcal{A}} \cong \langle A, B, C, D, E, F \rangle_{\mathcal{B}}$. Then there can not be a plane $\mathcal{D}$ in which both $\mathcal{A}$ and $\mathcal{B}$ $\preccurlyeq_{L'}$-embed over $\mathcal{C}$, because otherwise the lines $A \vee C$ and $B \vee E$ would have to intersect in two {\em different} points (the images of $P_0$ and $P_1$).
	
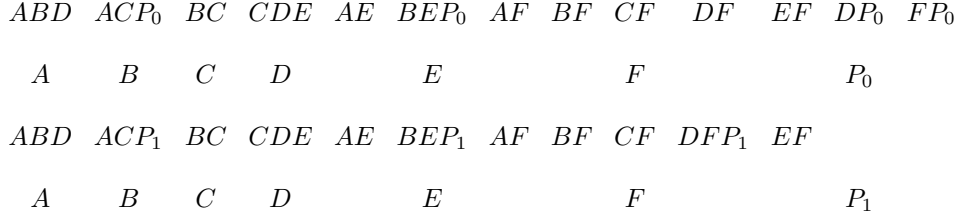
\begin{figure}[ht]
\begin{center}
\begin{tikzpicture}
\matrix (a) [matrix of math nodes, column sep=0.05cm, row sep=0.3cm]{
ABD & ACP_0 & BC & CDE & AE & BEP_0 & AF & BF & CF & DF & EF & DP_0 & FP_0 \\
A   & B  & C  & D  & & E & & & F & & & P_0 \\
ABD & ACP_1 & BC & CDE & AE & BEP_1 & AF & BF & CF & DFP_1 & EF  \\
A   & B  & C  & D  & & E & & & F & & & P_1 \\};
\end{tikzpicture}
\end{center}  \caption{Failure of amalgamation in $L' = \left\{ 0, 1, \vee \right\}$.} \label{fail_amal}
\end{figure}
\end{example}

	We now show that in the expanded vocabulary $L = \left\{ 0, 1, \vee, \wedge \right\}$ the class of planes {\em does} have the amalgamation property. The proof is essentially straightforward, this leads us to believe that this fact is know among experts.

\begin{theorem}\label{amalgamation_finite_planes} $(\mathbf{K}^3, \preccurlyeq_L)$ has $\mathrm{AP}$. 
\end{theorem}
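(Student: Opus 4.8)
The plan is to pass to the combinatorial picture and build the amalgam by hand. Recall (Birkhoff--Whitney, and \cite{aigner}) that a plane, i.e.\ a rank-$3$ geometric lattice, is the same thing as a \emph{linear space}: a set of points $P$ together with a family of ``lines'' (subsets of $P$ of size $\geq 2$) such that any two distinct points lie on a unique common line, subject to the non-degeneracy condition that $P$ has three non-collinear points. Under this translation $\preccurlyeq_{L'}$ is the subgeometry relation and $\preccurlyeq_{L}$ the $\wedge$-subgeometry relation. So let $\mathcal{A} \preccurlyeq_{L} \mathcal{B}_0$ and $\mathcal{A} \preccurlyeq_{L} \mathcal{B}_1$; I would identify $P(\mathcal{A})$ with a subset of each $P(\mathcal{B}_i)$ and arrange that $P(\mathcal{B}_0) \cap P(\mathcal{B}_1) = P(\mathcal{A})$. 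For an $\mathcal{A}$-line $\ell$, write $\hat\ell_i$ for the unique line of $\mathcal{B}_i$ containing $\ell$; from $\mathcal{A}\preccurlyeq_{L'}\mathcal{B}_i$ one gets $\hat\ell_i \cap P(\mathcal{A}) = \ell$.

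The crucial preliminary observation --- and the place where the jump from $L'$ to $L$ earns its keep --- is: \emph{each point $p\in P(\mathcal{B}_i)\setminus P(\mathcal{A})$ lies on at most one line of $\mathcal{B}_i$ of the form $\hat\ell_i$}. For if $p$ lay on two of them, $\hat\ell_i\neq \hat m_i$, then, being distinct lines, they meet only in $p$; but $\ell,m\in\mathcal{A}$ and $\wedge$ is preserved, so $\ell\wedge^{\mathcal{A}}m\in\mathcal{A}$ would be sent to $p$, forcing $p\in P(\mathcal{A})$ --- a contradiction. (This is precisely what breaks down for the amalgam displayed in Example \ref{failure_amalgamation}, which is not a $\wedge$-subgeometry of its two covers.)

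Next I would define the amalgam $\mathcal{C}$ explicitly. Its point set is the pushout $P(\mathcal{B}_0)\cup P(\mathcal{B}_1)$ (disjoint union over $P(\mathcal{A})$). Its lines are of four kinds: (i) the ``thickened'' lines $\hat\ell_0\cup\hat\ell_1$, one for each $\mathcal{A}$-line $\ell$ (the two pieces overlap in exactly $\ell$); (ii)--(iii) each line of $\mathcal{B}_0$, respectively $\mathcal{B}_1$, that meets $P(\mathcal{A})$ in at most one point, kept unchanged; (iv) for every pair $p\in P(\mathcal{B}_0)\setminus P(\mathcal{A})$, $q\in P(\mathcal{B}_1)\setminus P(\mathcal{A})$ that does not already lie on a common line of type (i), the two-element line $\{p,q\}$. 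I would then check that $\mathcal{C}$ is a linear space --- a case analysis on a pair of distinct points according to which of $P(\mathcal{B}_0)$, $P(\mathcal{B}_1)$, $P(\mathcal{A})$ it lies in, the only subtle case being a ``cross pair'' $(p,q)$, where the observation above says $p$ lies on at most one type-(i) line and $q$ on at most one, so $p,q$ lie on exactly one line of $\mathcal{C}$ (a type-(i) line if these coincide, else the type-(iv) line $\{p,q\}$) --- and that it has three non-collinear points (inherited from $\mathcal{B}_0$), hence is a plane.

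Finally I would verify that the two pushout inclusions $\mathcal{B}_i\hookrightarrow\mathcal{C}$ are $\preccurlyeq_{L}$-embeddings agreeing on $\mathcal{A}$. For $\preccurlyeq_{L'}$: the $\mathcal{C}$-line spanned by two points of $P(\mathcal{B}_0)$ is either their $\mathcal{B}_0$-line (type (ii)) or a thickening $\hat\ell_0\cup\hat\ell_1$ of it (type (i)), and intersecting it with $P(\mathcal{B}_0)$ returns the original $\mathcal{B}_0$-line, so $\vee$ is preserved and the order reflected. For $\wedge$: given two lines of $\mathcal{B}_0$, the only case needing thought is two \emph{disjoint} ones, $\hat n_0$ and $\hat n_0'$, both extending $\mathcal{A}$-lines $n\neq n'$; then $n\wedge^{\mathcal{A}}n'=0$, so by $\mathcal{A}\preccurlyeq_{L}\mathcal{B}_1$ their $\mathcal{B}_1$-extensions $\hat n_1,\hat n_1'$ are disjoint too, hence the thickened $\mathcal{C}$-lines $\hat n_0\cup\hat n_1$ and $\hat n_0'\cup\hat n_1'$ remain disjoint and the meet stays $0$. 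Agreement on $\mathcal{A}$ is built in: both embeddings fix $P(\mathcal{A})$ pointwise and send an $\mathcal{A}$-line $\ell$ to $\hat\ell_0\cup\hat\ell_1$. The only genuine effort is the bookkeeping in the linear-space check and in the $\wedge$-preservation check --- making sure the thickening never produces a second line through a pair of points, nor a spurious intersection of two lines --- and every instance of it reduces to the one-extended-line observation of the second paragraph; I expect that observation (together with getting the ``thicken plus fresh two-point lines'' recipe right) to be the crux, with everything else routine.
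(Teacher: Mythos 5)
Your proof is correct, but it follows a genuinely different route from the paper's. The paper amalgamates by transfinite induction: it enumerates the points of one extension outside the base, adds them one at a time to the other extension via Crapo one-point extensions along the modular cuts $M_i$ pulled back from $\mathcal{B}$ (the Claim in the paper's proof is exactly that $\wedge$-closedness makes $M_i$ a modular cut of the partial amalgam), and then verifies by a case analysis on pairs of lines that both factors stay $\wedge$-subgeometries at every stage. You instead build the amalgam in one shot, in the cryptomorphic language of linear spaces: thicken each $\mathcal{A}$-line by gluing its two extensions, keep all other lines, and join the remaining cross pairs by fresh two-point lines; the whole verification then hinges on your observation that $\wedge$-preservation forbids a point outside $P(\mathcal{A})$ from lying on two distinct extended $\mathcal{A}$-lines (which is indeed exactly what fails in Example \ref{failure_amalgamation}), and your disjointness-preservation argument for $\wedge$ is sound since $\ell\wedge^{\mathcal{A}}m=0$ transfers to both sides. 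The two constructions produce essentially the same amalgam -- the paper's iterated one-point extensions also only ever put a new point of $\mathcal{B}$ under lines coming from the base, so cross incidences occur exactly when forced by a common base line -- but your version is more elementary and self-contained (no modular cuts, no appeal to Theorem \ref{one_point_2}), and it isolates the geometric reason why passing from $L'$ to $L$ restores amalgamation. What the paper's formulation buys is uniformity with the machinery of one-point extensions used throughout the rest of the paper and a shape of argument one could hope to push to rank $n\geq 4$ (their stated open problem), whereas your explicit ``points and lines'' description is intrinsically rank-$3$; in fairness, the paper's inductive case analysis is also rank-$3$-specific, so neither proof settles the higher-rank question. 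The steps you flag as routine (the remaining linear-space cases, injectivity of the embeddings, joins with $1$, agreement over $\mathcal{A}$) are indeed routine and check out.
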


\begin{proof} Let $\mathcal{A}, \mathcal{B}, \mathcal{C} \in \mathbf{K}^3$, with $C = A \cap B$ and $\mathcal{C} \leq \mathcal{A}, \mathcal{B}$. Let $(p_i)_{i < \alpha}$ be an injective enumeration of $P(\mathcal{B}) - P(\mathcal{C})$ and $\mathcal{B}_i = \langle C, (p_j)_{j < i} \rangle_{\mathcal{B}}$, for every $i < \alpha$. By induction on $i < \alpha$ we define $(\mathcal{D}_i)_{i < \alpha}$ such that
\begin{enumerate}[i)]
	\item $\mathcal{A}$ is a $\wedge$-subgeometry of $\mathcal{D}_i$;
	\item $\mathcal{B}_{i}$ a $\wedge$-subgeometry of $\mathcal{D}_i$;
	\item $\mathcal{D}_{i}$ is a subgeometry of $\mathcal{D}_{i+1}$.
\end{enumerate}
Of course $\bigcup_{i < \alpha} \mathcal{D}_i = \mathcal{D}$ will be the wanted amalgam of $\mathcal{A}$ and $\mathcal{B}$ over $\mathcal{C}$. If $i = 0$, let $\mathcal{D}_i = \mathcal{A}$. For $i$ limit, let $\mathcal{D}_i = \bigcup_{j < i} \mathcal{D}_j$. Suppose then that $i = j+1$ and let $M_i = \left\{ x \in B_{j} \, | \, \mathcal{B} \models p_{j} \leq x \right\}$.

\begin{claim} $M_i$ is a modular cut of $\mathcal{D}_{j}$.
\end{claim}

\begin{claimproof} By hypothesis $B_{j}$ is a $\wedge$-subgeometry of $\mathcal{D}_{j}$ and so the pair $(a, b)$ is modular in $B_{j}$ iff it is modular in $\mathcal{D}_{j}$.
\end{claimproof}

\noindent Then we can define $\mathcal{D}_i = \mathcal{D}_{j} \oplus_{M_i} p_{j}$. We verify the inductive properties. Item iii) is clear, we show ii). By inductive hypothesis, $\mathcal{B}_{j}$ is a subgeometry of $\mathcal{D}_{j}$ and furthermore
	\[ \left\{ x \in D_{i} \, | \, p_{j} \leq x \right\} = M_i = \left\{ x \in B_{i} \, | \, p_{j} \leq x \right\}, \]
and so $\mathcal{B}_{i}$ is a subgeometry of $\mathcal{D}_{i}$. We now show that $\mathcal{B}_{i}$ is actually a $\wedge$-subgeometry of $\mathcal{D}_{i}$. Let then $l_0, l_1$ be lines in $\mathcal{B}_{i}$, we want to show that $$(l_0 \wedge l_1)^{\mathcal{B}_{i}} = (l_0 \wedge l_1)^{\mathcal{D}_{i}}.$$ 
There are several cases.
\newline {\bf Case 1.} $l_0, l_1 \in B_{j}$.
\newline {\bf Case 1.1.} $l_0 \not\in M_i$ or $l_1 \not\in M_i$.
In this case we have (by the induction hypothesis that $B_j$ is a $\wedge$-subgeometry of $D_j$)
\[ \begin{array}{rcl}
		(l_0 \wedge l_1)^{\mathcal{B}_{i}} & = & (l_0 \wedge l_1)^{\mathcal{B}_{j}} \\
											& = & (l_0 \wedge l_1)^{\mathcal{D}_{j}} \\
											& = & (l_0 \wedge l_1)^{\mathcal{D}_{i}}.
		\end{array} \] 
		 {\bf Case 1.2.} $l_0, l_1 \in M_i$.
		 In this case we have 
	\[ (l_0 \wedge l_1)^{\mathcal{B}_{i}} = p_{j} = (l_0 \wedge l_1)^{\mathcal{D}_{i}}. \]
\newline {\bf Case 2.} $l_0 \not\in B_{j}$ (i.e it is a new line).
\newline {\bf Case 2.1.} $l_1 \not\in B_{j}$.
In this case we have 
\[ l_0 = q_0 \vee p_{j} \; \text{ and } \; l_1 = q_1 \vee p_{j}, \]
for $q_0, q_1 \in CO_{\mathcal{B}_{j}}(M_i) \subseteq CO_{\mathcal{D}_{j}}(M_i)$. Thus,
	\[ (l_0 \wedge l_1)^{\mathcal{B}_{i}} = p_{j} = (l_0 \wedge l_1)^{\mathcal{D}_{i}}. \]
\newline {\bf Case 2.2.} $l_1 \in B_{j}$. In this case we have that $l_0 = q_0 \vee p_{j}$ (as in the case above) and $l_1$ is an old line.
\newline {\bf Case 2.2.1.} $l_1 \in M_i$.
In this case we have 
\[ (l_0 \wedge l_1)^{\mathcal{B}_{i}} = p_{j} = (l_0 \wedge l_1)^{\mathcal{D}_{i}}. \]
\newline {\bf Case 2.2.2.} $l_1 \not \in M_i$. In this case we have 
\[ \begin{array}{rcl}
		(l_0 \wedge l_1)^{\mathcal{B}_{i}} = 0 & \text{ or } & (l_0 \wedge l_1)^{\mathcal{B}_{i}} = q_0 \\
		 & \Downarrow &  \\
		(l_0 \wedge l_1)^{\mathcal{D}_{i}} = 0 & \text{ or } & (l_0 \wedge l_1)^{\mathcal{D}_{i}} = q_0.
		\end{array} \] 
	Finally, we verify i). Of course $\mathcal{A}$ is a subgeometry of $\mathcal{D}_i$, because 
$\mathcal{A}$ is a subgeometry $\mathcal{D}_{j}$ and $\mathcal{D}_{j}$ is a subgeometry of $\mathcal{D}_{i}$. We show that it is actually a $\wedge$-subgeometry. Let $l_0, l_1$ be lines in $\mathcal{A}$. There are two cases.
\newline {\bf Case a.} $l_0, l_1 \in C$.
In this case we have (since $B_i$ is a $\wedge$-subgeometry of $D_i$)
\[ \begin{array}{rcl}
		(l_0 \wedge l_1)^{\mathcal{A}} & = & (l_0 \wedge l_1)^{\mathcal{C}} \\
											& = & (l_0 \wedge l_1)^{\mathcal{B}_{i}} \\
											& = & (l_0 \wedge l_1)^{\mathcal{D}_{i}}.
		\end{array} \] 
\newline {\bf Case b.} $l_0 \not\in C$ or $l_1 \not\in C$.
Suppose that $l_0$ and $l_1$ are parallel in $\mathcal{A}$ but incident in $\mathcal{D}_i$ in a point $p$. Then we must have that $p = p_{j}$ as $(l_0 \wedge l_1)^A = 0 = (l_0 \wedge l_1)^{D_j}$ by the inductive hypothesis. But then $l_0, l_1 \in M_i$, which is absurd, because $M_i \subseteq B_{j}$ and $B_{j} \cap A = C$.
\end{proof}

	The idea used in the proof above should generalize to $n > 3$, but unfortunately we were not able to establish this because of some combinatorial difficulties. We then leave open the question of amalgamation of geometric lattices or rank $n > 3$ in the vocabulary $L = \left\{ 0, 1, \vee, \wedge \right\}$. Once again, we suspect that this is known among combinatorialists. We denote by $\mathbf{K}^{n}_{0}$ the class of geometric lattices of fixed rank $n$.

\begin{oproblem} Does $(\mathbf{K}^{n}_{0}, \preccurlyeq_L)$ have $\mathrm{AP}$ for $n \geq 4$?
\end{oproblem}

	\begin{theorem} $(\mathbf{K}^{3}_{0}, \preccurlyeq_L)$ is an $\mathrm{AEC}$ with $\mathrm{AP}$, $\mathrm{JEP}$ and $\mathrm{ALM}$.	
\end{theorem}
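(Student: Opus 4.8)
The plan is to verify each AEC axiom from Definition \ref{def_indep_first_order} in turn for $(\mathbf{K}^3_0, \preccurlyeq_L)$, where $\preccurlyeq_L$ is simply the $L$-submodel relation on geometric lattices of rank $3$, and then separately record $\mathrm{AP}$, $\mathrm{JEP}$ and $\mathrm{ALM}$. Axioms (1), (2), (3) are immediate: $\mathbf{K}^3_0$ is closed under isomorphism, $\preccurlyeq_L$ is by definition the $L$-submodel relation (so (2) holds with equality), and the submodel relation is always a partial order. For the Coherence Axiom (5), if $\mathcal{A} \leq \mathcal{B} \leq \mathcal{C}$ in the vocabulary $L$, then $\mathcal{A}$ is trivially an $L$-submodel of $\mathcal{B}$; coherence is automatic precisely because $\preccurlyeq_L$ coincides with $\leq_L$. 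For the Existence of an LS-number (6): given $\mathcal{A} \in \mathbf{K}^3_0$ and $B \subseteq A$, take the $L$-substructure generated by $B$ (closing under $\vee$ and $\wedge$); one checks this is again a geometric lattice of rank $3$ — it is a point lattice since every element is a join of atoms below it, which lie in the generated substructure, and semimodularity is inherited — of cardinality $\leq |B| + \aleph_0$, so $\mathrm{LS}(\mathbf{K}^3_0, \preccurlyeq_L) = \aleph_0$ works.

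The chain axiom (4) is the part requiring a small argument. Given an increasing continuous $\preccurlyeq_L$-chain $(\mathcal{A}_i)_{i<\delta}$, let $\mathcal{A} = \bigcup_{i<\delta}\mathcal{A}_i$. Since $L$-submodel unions of lattices are lattices, and every element of $\mathcal{A}$ already appears in some $\mathcal{A}_i$ (so is a join of atoms there, hence in $\mathcal{A}$), $\mathcal{A}$ is a point lattice; semimodularity and the finite-chain condition are checked locally inside a single $\mathcal{A}_i$, which suffices because $h^{\mathcal{A}} = h^{\mathcal{A}_i}$ on each fixed interval (each $\mathcal{A}_i$ is a full $L$-submodel, so meets and joins, hence ranks, are absolute). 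Thus $\mathcal{A} \in \mathbf{K}^3_0$, giving (4.1); (4.2) is then clear; and (4.3), the Smoothness Axiom, holds because if every $\mathcal{A}_j$ is an $L$-submodel of $\mathcal{B}$ then so is their union. (Note this contrasts with $(\mathbf{K}^3, \preccurlyeq)$ in Section \ref{plane}, where $\preccurlyeq$ is strictly finer than $\leq_L$ and smoothness will fail.)

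Finally, $\mathrm{AP}$ is exactly Theorem \ref{amalgamation_finite_planes}. For $\mathrm{JEP}$: any two planes $\mathcal{B}_0, \mathcal{B}_1$ both $\preccurlyeq_L$-contain the common bottom structure — take the Boolean algebra $FG_3$ on three atoms, which $\wedge$-embeds into any rank-$3$ geometric lattice — so $\mathrm{JEP}$ follows from $\mathrm{AP}$ applied over $FG_3$; one should double-check that $FG_3$ indeed sits as a $\wedge$-subgeometry (choose any three non-collinear points and observe all pairwise meets are $0$ in both the sub- and the ambient lattice). For $\mathrm{ALM}$: build arbitrarily large planes, e.g. by iterating generic point additions $\mathcal{L} \oplus_1 p$ from Lemma \ref{switchining}'s setup, or more concretely by taking a near-pencil or the truncation of a large Boolean lattice, to get geometric lattices of rank $3$ in every infinite cardinality. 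I expect the only genuinely non-routine ingredient to be the already-proved Theorem \ref{amalgamation_finite_planes}; everything else is bookkeeping, with the one subtlety being the absoluteness of rank and meets across the chain, which is what makes both the chain axiom and smoothness go through cleanly.
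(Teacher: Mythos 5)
Your overall route coincides with the paper's (which treats the AEC axioms as routine, derives $\mathrm{JEP}$ from $\mathrm{AP}$ via the $FG_3$-embedding, and quotes Theorem \ref{amalgamation_finite_planes} for $\mathrm{AP}$), but your verification of the L\"owenheim--Skolem axiom has a genuine flaw. The $L$-substructure of $\mathcal{A}$ generated by $B$ need not lie in $\mathbf{K}^3_0$: closing under $\vee$ and $\wedge$ does \emph{not} bring in the atoms below a given element, contrary to your justification ("every element is a join of atoms below it, which lie in the generated substructure"). Concretely, if $B = \{l\}$ for a single line $l$ of $\mathcal{A}$ (or $B = \emptyset$), the generated substructure is $\{0, l, 1\}$ (resp.\ $\{0,1\}$): it is not a point lattice and has rank $2$ (resp.\ $1$), so it is not a rank-$3$ geometric lattice, and the axiom requires the witness $\mathcal{C}$ to be a member of $\mathbf{K}^3_0$. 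The repair is the same move the paper makes in the LS step of Theorem \ref{our_class_is_almost_AEC}: first enlarge $B$ to a set $B'$ of \emph{points} of $\mathcal{A}$ such that every element of $B$ is a join of points of $B'$ and $B'$ contains three non-collinear points (to force rank $3$), and then close under $\vee$ and $\wedge$ in $\omega$ many stages (meets of lines create new points, which create new joins, and so on); the union is an $L$-submodel of $\mathcal{A}$ of size at most $|B| + \aleph_0$ in which every element is a join of its points, and one checks it is a rank-$3$ geometric lattice, so $\mathrm{LS} = \aleph_0$ indeed works.

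The rest is sound and matches the paper's intent: (1)--(3) and Coherence are immediate because $\preccurlyeq_L$ is just the $L$-submodel relation; smoothness for unions of $L$-submodels is trivial; $\mathrm{JEP}$ via the copy of $FG_3$ spanned by three non-collinear points and $\mathrm{ALM}$ via generic point additions (or rank-$3$ truncations of large Boolean lattices) are exactly what the paper has in mind; and $\mathrm{AP}$ is Theorem \ref{amalgamation_finite_planes}. One caveat on your chain argument: the absoluteness of ranks is not a formal consequence of being an $L$-submodel alone; it uses that \emph{both} members of the pair are rank-$3$ geometric lattices, so that atoms of the smaller plane remain atoms of the larger (this is the paper's observation that on $\mathbf{K}^3_0$ the $L$-submodel relation coincides with being a $\wedge$-subgeometry), and it is precisely this point that fails for your unrepaired LS witness, since there the smaller structure is not assumed to be in the class.
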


	\begin{proof} Regarding $\mathrm{ALM}$, $\mathrm{JEP}$ and $\mathrm{AP}$, the first is obviously satisfied, the second follows from the third because $FG_3$ $\preccurlyeq_L$-embeds in every structure in $\mathbf{K}_0^{3}$, and the third is taken care of by Theorem \ref{amalgamation_finite_planes}. The axioms of $\mathrm{AEC}$s are obviously satisfied.
\end{proof}

	In (loose) analogy with the random graph, we call the monster model $\mathfrak{M}(\mathbf{K}^{3}_{0}, \preccurlyeq_L)$ the {\em random plane}.
	Notice that because of Proposition \ref{equiv_def_geom_lat} the class $\mathbf{K}^{n}_{0}$ is first-order axiomatizable, and in particular compact. We recall the definition of independence property for complete first-order theory $T$.

	\begin{definition} Let $T$ be a complete first-order theory with infinite models. We say that $T$ has the {\em independence property} if there exists a formula $\phi(x, y)$ such that for every $n < \omega$ there are $(a_i)_{i < n} \in (\mathfrak{M}^{|x|})^{n}$ and $(\vec{b}_J)_{J \subseteq n} \in (\mathfrak{M}^{|y|})^{2^n}$ such that 
	\[ \mathfrak{M} \models \phi(a_i, b_J) \;\; \text{ iff } \;\; i \in J. \]				
\end{definition}

\begin{theorem}\label{indepence_property} The first-order theory of the random plane has the independence property.
\end{theorem}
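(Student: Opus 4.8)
The plan is to find a quantifier-free $L$-formula $\phi(x,y)$ which shatters arbitrarily large finite sets inside $\mathfrak{M}=\mathfrak{M}(\mathbf{K}^3_0,\preccurlyeq_L)$. The reduction I will use is: for each $n<\omega$ I construct a \emph{finite} plane $\mathcal{C}_n\in\mathbf{K}^3_0$ containing elements $a_0,\dots,a_{n-1}$ and $(b_J)_{J\subseteq n}$ with $\mathcal{C}_n\models\phi(a_i,b_J)$ iff $i\in J$. Since $\mathfrak{M}$ is $\preccurlyeq_L$-universal and $\mathcal{C}_n$ is finite, there is a $\preccurlyeq_L$-embedding $f:\mathcal{C}_n\to\mathfrak{M}$; as $\preccurlyeq_L$ is the $L$-submodel relation, $f$ preserves and reflects quantifier-free $L$-formulas, so the $f(a_i),f(b_J)$ witness shattering in $\mathfrak{M}$, and $n$ being arbitrary this gives the independence property for $\mathrm{Th}(\mathfrak{M})$.

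The first point to settle is the choice of $\phi$. The obvious candidate, point--line incidence $x\leq y$, does \emph{not} work: two distinct points span a unique line, so any $b_J$ with $a_0,a_1\leq b_J$ equals $a_0\vee a_1$; hence for $n\geq3$ one would get $b_{\{0,1\}}=a_0\vee a_1=b_{\{0,1,2\}}$, forcing $a_2\leq b_{\{0,1\}}$ — contradiction. So instead I let $x,y$ range over lines and take $\phi(x,y):\equiv\lnot(x\wedge y=0)$, i.e. ``$x$ and $y$ are non-parallel''. (No conjunct asserting that $x,y$ are lines is needed: the independence property only constrains the values $\phi(a_i,b_J)$, and the $a_i,b_J$ will be lines by construction.)

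For the configuration I will exploit that, by semimodularity, two distinct lines of a rank-$3$ geometric lattice meet in at most a point (if $\ell_0\neq\ell_1$ then $r(\ell_0\vee\ell_1)=3$, whence $r(\ell_0\wedge\ell_1)\leq r(\ell_0)+r(\ell_1)-r(\ell_0\vee\ell_1)=1$), while dually a single line may carry arbitrarily many points. I build $\mathcal{C}_n$ as a finite linear space — equivalently a simple rank-$3$ matroid, i.e. an object of $\mathbf{K}^3_0$ — as follows: take pairwise parallel lines $a_0,\dots,a_{n-1}$; for each $J\subseteq n$ a transversal $b_J$ crossing $a_i$ in exactly one point for each $i\in J$, with all these crossing points chosen pairwise distinct, and with two extra free points adjoined on each $b_J$ so that every line has at least two points; and finally a two-point line through every pair of points not yet on a common line, to make the incidence structure a genuine linear space. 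Since $a_0,a_1$ are parallel there are three non-collinear points, so $\mathcal{C}_n$ has rank exactly $3$; and $a_i$ and $b_J$ share a point — their crossing point — precisely when $i\in J$ (no other crossing point lying on $a_i$ lies on $b_J$, and adjoining two-point lines adds no point to an old line), so $a_i\wedge b_J\neq0$ iff $i\in J$, as required.

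I expect the main obstacle to be the verification that this prescribed linear space is genuinely consistent as a geometric lattice — above all, that completing it with two-point lines does not secretly force some incidence $a_i\wedge b_J\neq0$ with $i\notin J$, and that all the $b_J$ really are lines of rank $2$ distinct from each other and from the $a_i$ — but this is routine linear-space bookkeeping. The conceptual content is just the observation that point--line (and small line--line) incidences in a plane are too rigid to shatter large sets, so one must use a single parallel class together with transversals running through it.
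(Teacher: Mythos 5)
Your proposal is correct and follows essentially the same route as the paper: the paper also shatters via the quantifier-free formula $x \wedge y \neq 0$ applied to a family of mutually non-intersecting lines $a_i$ and, for each $J \subseteq n$, a line $b_J$ meeting exactly the $a_i$ with $i \in J$, realized in the monster by universality/homogeneity. The only (cosmetic) difference is that the paper builds the configuration via generic points and one-point extensions of $FG_3$, producing $\mathcal{A}$ and a family $(\mathcal{B}_J)_{J\subseteq n}$ of extensions over it, whereas you amalgamate everything into a single finite linear space before embedding.
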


\begin{proof} Let $n < \omega$ and $\mathcal{A}$ be the geometry obtained by adding generic points $(p^i_j)_{j < n, \, i < 2}$ to $FG_3$ on point set $A, B, C$ (the order in which the $p^i_j$ are added does not matter, of course). Let then $a_i =  p_{i}^{0} \vee  p_{i}^{1}$, for $i < n$, and, for $J \subseteq n$, let $\mathcal{B}_J$ be the geometry obtained from $\mathcal{A}$ by first adding two generic points $(q_i)_{i < 2}$ and then adding points $p_i$ in the lines $a_i$ and $b_J = q_0 \vee q_1$, for all and only the $i \in J$. 
The structures $\mathcal{A}$ and $(\mathcal{B}_J)_{J \subseteq n}$ can of course be found in the random plane $\mathfrak{M}$, and so we have that for every $J \subseteq n$, $\mathfrak{M} \models a_i \wedge b_J \neq 0$ iff $i \in J$.
\end{proof}



	Notice that from the proof of Theorem \ref{indepence_property} it follows that $(\mathbf{K}^{3}_{0}, \preccurlyeq_L)$ is unstable (with respect to Galois types). Given the unstability of $(\mathbf{K}^{3}_{0}, \preccurlyeq_L)$, we searched for strengthening of the notion of $\wedge$-subgeometry that would still give rise to amalgamation. Based on the notion of principal one-point extension of Section \ref{geo_lat} and Lemma \ref{switchining}, we found one such notion: two geometric lattices of the same rank are strong one in another if the second can be obtained from the first by a sequence of {\em principal} one-point extensions (cf. the next section for a more thorough explanation).

\section{A Plane Beyond $\mathrm{AECs}$}\label{plane}

	In this and the next section we use the ideas mentioned at the end of Section \ref{random_plane} to study various classes of geometric lattices and notions of strong embeddings between them. We will focus on two notions of strong subgeometry, $\preccurlyeq$ and $\preccurlyeq^*$, and in function of them define two classes of planes, $\mathbf{K}^{3}$ and $\mathbf{K}^{3}_*$. We will see that $(\mathbf{K}^{3}, \preccurlyeq)$ is an almost $\mathrm{AEC}$ (Theorem \ref{our_class_is_almost_AEC}), it fails the Smootheness Axiom (Theorem \ref{th_fail_smooth}), and it is stable in every infinite cardinality (Corollary \ref{stability}). We then define an independence notion $\pureindep$ for $(\mathbf{K}^{3},  \preccurlyeq)$ and prove that $(\mathbf{K}^{3}, \preccurlyeq, \pureindep)$ is an independence calculus (Theorem \ref{good_frame_th}). Regarding $(\mathbf{K}^{3}_*, \preccurlyeq^*)$ we will see that there are difficulties in establishing the Coherence Axiom, but that if we modify $\preccurlyeq^*$ to a coherent relation $\preccurlyeq^+$, and accordingly extend $\mathbf{K}^{3}_*$ to a class $\mathbf{K}^{3}_+$, then $(\mathbf{K}^{3}_+, \preccurlyeq^+)$ is in fact an $\mathrm{AEC}$ (Theorem \ref{+_is_AEC}), and furthermore it is stable in every cardinal $\kappa$ such that $\kappa^{\omega} = \kappa$ (Theorem \ref{+_is_stable}). 
	
	We now use the theory of principal one-point extensions of geometric lattices to define a notion of construction between geometries which reminds of the notion of constructible sets for (abstract) isolation notion of classification theory, see e.g. \cite{shelah}. Given a set $X$ and $m \leq n < \omega$, we denote by $\mathcal{P}_{m/n}(X)$ the set of subsets of $X$ of cardinality at least $m$ and at most $n$. Recall that given a geometric lattice $\mathcal{A}$ we denote by $P(\mathcal{A})$ the set of points, i.e. rank $1$ objects, of $\mathcal{A}$. 
	
	\begin{definition}\label{construction} Let $\mathcal{A} \in \mathbf{K}^{n}_{0}$, $(a_i)_{i < \alpha}$ a sequence of elements so that $a_i \not\in A \cup \left\{ a_j \, | \, j < i \right\}$ and $(B_i \in \mathcal{P}_{2/n}(P(\mathcal{A}) \cup \left\{ a_j \, | \, j < i \right\})_{i < \alpha}$. By induction on $j < \alpha$, we define $\mathcal{A}_j \in \mathbf{K}^{n}_{0}$ as follows:
	\begin{enumerate}[i)]
	\item $j = 0$. $\mathcal{A}_0 = \mathcal{A}$;
	\item $j = i+1$. $\mathcal{A}_j = \mathcal{A}_i \oplus_{\bigvee B_i} a_i$, where $\bigvee B_i = (\bigvee B_i)^{\mathcal{A}_i}$;
	\item $j$ limit. $\mathcal{A}_j = \bigcup_{i < j} \mathcal{A}_i$.
\end{enumerate}
Finally, we define 
	$$\mathrm{CS}(\mathcal{A}, (a_i, B_i)_{i < \alpha}) = \bigcup_{i < \alpha} \mathcal{A}_i$$ 
($\mathrm{CS}$ is for construction).
\end{definition}

	I.e. $\mathcal{B} = \mathrm{CS}(\mathcal{A}, (a_i, B_i)_{i < \alpha})$ if $\mathcal{B}$ can be obtained from $\mathcal{A}$ via a sequence of principal one-point extensions (cfr. the notation introduced right after Lemma \ref{princ_pres_ext}). We refer to a sequence $(a_i, B_i)_{i < \alpha}$, as in the definitions above, as a  construction over $\mathcal{A}$. With some abuse of notation, we will refer to sequences of the form $(a_i, B_i)_{i \in X}$, for $X$ a set of ordinals, $a_i \not\in A \cup \left\{ a_j \, | \, j \in i \cap X \right\}$ and $(B_i \in \mathcal{P}_{2/n}(P(\mathcal{A}) \cup \left\{ a_j \, | \, j \in i \cap X \right\})_{i \in X}$, also as constructions over $\mathcal{A}$. We now define two notions of strong subgeometry, these will be the essential ingredients of this section. Notice that, for $\mathcal{A}, \mathcal{B} \in \mathbf{K}^{n}_{0}$,  $\mathcal{A}$ is a $\wedge$-subgeometry of $\mathcal{B}$ (cfr. Definition \ref{def_meet_subgeo}) iff $\mathcal{A}$ is an $L$-submodel of $\mathcal{B}$, for $L = \left\{ 0, 1, \vee, \wedge \right\}$.

	\begin{definition}\label{def_of_strong} Let $\mathcal{A}, \mathcal{B} \in \mathbf{K}^{n}_{0}$ with $\mathcal{A} \leq \mathcal{B}$, i.e. $\mathcal{A}$ is a $\wedge$-subgeometry of $\mathcal{B}$. 
	\begin{enumerate}[i)]
	\item We say that $\mathcal{A}$ is {\em strong} in $\mathcal{B}$, for short $\mathcal{A} \preccurlyeq \mathcal{B}$, if there exists a well-ordering $(a_i)_{i < \alpha}$ of $P(\mathcal{B}) - P(\mathcal{A})$ and $(B_i \in \mathcal{P}_{2/n}(P(\mathcal{A}) \cup \left\{ a_j \, | \, j < i \right\})_{i < \alpha}$ such that $$\mathcal{B} = \mathrm{CS}(\mathcal{A}, (a_i, B_i)_{i < \alpha}).$$
	\item We say that $\mathcal{A}$ is {\em almost strong} in $\mathcal{B}$, for short $\mathcal{A} \preccurlyeq^* \mathcal{B}$, if there exists a linear ordering $(a_i)_{i \in (I, <)}$ of $P(\mathcal{B}) - P(\mathcal{A})$ and $(B_i \in \mathcal{P}_{2/n}(P(\mathcal{A}) \cup \left\{ a_j \, | \, j < i \right\})_{i \in (I, <)}$ such that:
\begin{enumerate}[a)]
	\item for every $x \in B - A$ there exists $j \in I$ such that $x \in \langle A, (a_i)_{i \leq j}\rangle_{\mathcal{B}} - \langle A, (a_i)_{i < j}\rangle_{\mathcal{B}}$;
	\item for every $j \in I$ we have that
	$$\langle A, (a_i)_{i \leq j} \rangle_{\mathcal{B}} = \langle A, (a_i)_{i < j} \rangle_{\mathcal{B}} \oplus_{\bigvee B_i} a_j,$$
where $\bigvee B_i = (\bigvee B_i)^{\langle A, (a_i)_{i < j} \rangle_{\mathcal{B}}}$.
\end{enumerate}
\end{enumerate} 
\end{definition}

	The following proposition gives an equivalent characterization of $\preccurlyeq$ and $\preccurlyeq^*$.
	
	\begin{proposition}\label{equi_car_of_strong} Let $\mathcal{A}, \mathcal{B} \in \mathbf{K}^{n}_{0}$ with $\mathcal{A} \leq \mathcal{B}$.
	\begin{enumerate}[i)]
	\item $\mathcal{A} \preccurlyeq \mathcal{B}$ iff there exists a well-ordering $(a_i)_{i < \alpha}$ of $P(\mathcal{B}) - P(\mathcal{A})$ such that for every $j < \alpha$ we have that $\langle A, (a_i)_{i < j} \rangle_{\mathcal{B}} \leq \mathcal{B}$.
	\item $\mathcal{A} \preccurlyeq^* \mathcal{B}$ iff there exists a linear ordering $(a_i)_{i \in (I, <)}$ of $P(\mathcal{B}) - P(\mathcal{A})$ such that:
\begin{enumerate}[a)']
	\item for every 
$x \in B - A$  there exists $j \in I$ such that $x \in \langle A, (a_i)_{i \leq j}\rangle_{\mathcal{B}} - \langle A, (a_i)_{i < j}\rangle_{\mathcal{B}}$; 
	\item for every $J \subseteq I$ downward closed we have that $\langle A, (a_i)_{i \in J} \rangle_{\mathcal{B}} \leq \mathcal{B}$.
\end{enumerate}
\end{enumerate}
\end{proposition}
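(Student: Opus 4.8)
The plan is to prove the two biconditionals by unwinding the definitions, in each case showing that the existence of a suitable "witness" sequence $(B_i)$ can be recovered from, and produces, the chain condition on the subgeometries $\langle A, (a_i)_{i<j}\rangle_{\mathcal{B}}$. The key observation that makes both directions work is the following: if $\mathcal{A}' \leq \mathcal{B}$ (i.e. $\mathcal{A}'$ is a $\wedge$-subgeometry of $\mathcal{B}$), $p \in P(\mathcal{B})$, and $\mathcal{A}'' = \langle A', p\rangle_{\mathcal{B}}$, then $\mathcal{A}' \leq \mathcal{A}''$ if and only if $\mathcal{A}'' = \mathcal{A}' \oplus_a p$ for the principal modular cut generated by $a = (\bigvee \{x \in P(\mathcal{A}') : \mathcal{B}\models p \leq b \text{ for every line } b \geq x\})^{\mathcal{A}'}$; more usefully, by Lemma \ref{princ_pres_ext}, a one-point extension $\mathcal{A}' \oplus_C p$ contains $\mathcal{A}'$ as a $\wedge$-subgeometry precisely when $C$ is principal, and Proposition \ref{one_point_1} identifies $C$ as the cut of elements above $p$. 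So I would first isolate this as a small lemma (or just cite the combination of \ref{one_point_1} and \ref{princ_pres_ext} inline): \emph{for $\mathcal{A}' \leq \mathcal{B}$ and $p \in P(\mathcal{B}) \setminus P(\mathcal{A}')$, setting $\mathcal{A}'' = \langle A', p\rangle_{\mathcal{B}}$, we have $\mathcal{A}' \leq \mathcal{A}''$ iff $\mathcal{A}'' = \mathcal{A}' \oplus_{\bigvee B} p$ for some $B \in \mathcal{P}_{2/n}(P(\mathcal{A}'))$.}

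For part i), the ($\Rightarrow$) direction: given a construction $(a_i, B_i)_{i<\alpha}$ with $\mathcal{B} = \mathrm{CS}(\mathcal{A}, (a_i,B_i)_{i<\alpha})$, one checks by induction on $j$ that $\mathcal{A}_j = \langle A, (a_i)_{i<j}\rangle_{\mathcal{B}}$ and that $\mathcal{A}_j \leq \mathcal{B}$. The equality $\mathcal{A}_j = \langle A, (a_i)_{i<j}\rangle_{\mathcal{B}}$ is immediate from the definition of $\mathrm{CS}$ since each step adjoins exactly the point $a_i$. For $\mathcal{A}_j \leq \mathcal{B}$: at successor steps $\mathcal{A}_{i+1} = \mathcal{A}_i \oplus_{\bigvee B_i} a_i$ is a principal extension, hence $\mathcal{A}_i \leq \mathcal{A}_{i+1}$ by Lemma \ref{princ_pres_ext}; one then needs transitivity of $\leq$ and the fact that the union of a $\leq$-increasing chain of $\wedge$-subgeometries is again a $\wedge$-subgeometry of $\mathcal{B}$ (a meet computed in the union equals the meet computed at any sufficiently large stage, which equals the meet in $\mathcal{B}$). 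Finally $\mathcal{A}_{\alpha} = \mathcal{B}$ gives that all the intermediate $\langle A, (a_i)_{i<j}\rangle_{\mathcal{B}}$ are $\wedge$-subgeometries of $\mathcal{B}$, and so in particular of $\mathcal{A}_{j+1}$ (again using transitivity of $\leq$). For ($\Leftarrow$): given a well-ordering $(a_i)_{i<\alpha}$ with each $\langle A, (a_i)_{i<j}\rangle_{\mathcal{B}} \leq \mathcal{B}$, apply the small lemma at each successor stage to $\mathcal{A}' = \langle A, (a_i)_{i<j}\rangle_{\mathcal{B}}$ and $p = a_j$ — since $\mathcal{A}' \leq \langle A, (a_i)_{i\leq j}\rangle_{\mathcal{B}} \leq \mathcal{B}$, we extract the required $B_j \in \mathcal{P}_{2/n}(P(\mathcal{A}) \cup \{a_i : i<j\})$ with $\langle A, (a_i)_{i\leq j}\rangle_{\mathcal{B}} = \mathcal{A}' \oplus_{\bigvee B_j} a_j$. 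At limit stages one takes unions, using that the well-ordering exhausts $P(\mathcal{B})$; this exhibits $(a_i, B_i)_{i<\alpha}$ as a construction with $\mathrm{CS}(\mathcal{A}, (a_i,B_i)_{i<\alpha}) = \mathcal{B}$, i.e. $\mathcal{A} \preccurlyeq \mathcal{B}$.

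For part ii) the argument is the same but bookkept along the linear (not well-) order $(I,<)$. Condition a) is literally condition a) of Definition \ref{def_of_strong}, so it transfers verbatim in both directions, and it guarantees that $\bigcup_{j \in I}\langle A, (a_i)_{i\leq j}\rangle_{\mathcal{B}}$ actually exhausts $P(\mathcal{B})$ as a point set even though $I$ need not be well-ordered. For ($\Rightarrow$): given a), b) of the definition, and $J \subseteq I$ downward closed, one shows $\langle A, (a_i)_{i\in J}\rangle_{\mathcal{B}} \leq \mathcal{B}$; the point is that for each $j \in J$, condition b) says $\langle A, (a_i)_{i\leq j}\rangle_{\mathcal{B}}$ is a principal one-point extension of $\langle A, (a_i)_{i<j}\rangle_{\mathcal{B}}$, so by Lemma \ref{princ_pres_ext} these form a $\leq$-increasing (transfinite but not necessarily well-ordered) family whose union over $J$ is $\langle A, (a_i)_{i\in J}\rangle_{\mathcal{B}}$, and one checks directly that this union is a $\wedge$-subgeometry of $\mathcal{B}$ — here I would argue that for $l_0, l_1$ lines in the union, both lie in $\langle A, (a_i)_{i\leq j}\rangle_{\mathcal{B}}$ for a single $j \in J$ (since each is a join of finitely many points, each of which appears at some stage $\leq$ some $j_k$, and $J$ downward closed lets us — wait, $I$ is merely linear, so take $j = \max$ of the finitely many indices, which exists), and that is a $\wedge$-subgeometry of $\mathcal{B}$ by b). For ($\Leftarrow$): given a well-ordered... given the linear order with a)$'$ and b)$'$, apply the small lemma to each pair $\langle A, (a_i)_{i<j}\rangle_{\mathcal{B}} \leq \langle A, (a_i)_{i\leq j}\rangle_{\mathcal{B}}$ (both downward-closed-indexed, hence $\wedge$-subgeometries of $\mathcal{B}$ by b)$'$, hence $\leq$ related by transitivity) to extract $B_j$, recovering b) of Definition \ref{def_of_strong}.

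\textbf{Main obstacle.} The only genuinely non-formal point is verifying that unions of $\leq$-increasing chains (or more generally linearly-indexed families) of $\wedge$-subgeometries remain $\wedge$-subgeometries of $\mathcal{B}$, and in the $\preccurlyeq^*$ case that the relevant families are directed enough for any two lines to be captured at a common stage — this is where one uses that lines have rank $\leq n$ so are joins of boundedly many points, each appearing at some index, and that a finite set in a linear order has a maximum. I expect everything else to be a direct translation through Lemma \ref{princ_pres_ext}, Proposition \ref{one_point_1}, and the definition of $\mathrm{CS}$.
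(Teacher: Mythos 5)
Parts i) and the ``sufficiency'' half of ii) of your proposal follow the paper's route: extract the sets $B_j$ from the hypothesis that the relevant subgeometries are $\wedge$-subgeometries of $\mathcal{B}$, using Proposition \ref{one_point_1} together with Lemma \ref{princ_pres_ext} (your explicit formula for the generator of the cut is garbled, but harmlessly so, since only the existential form of your ``small lemma'' is ever used; likewise ``transitivity'' is the wrong name for the trivial fact that $\mathcal{A}'\subseteq\mathcal{A}''$ with $\mathcal{A}'\leq\mathcal{B}$ and $\mathcal{A}''\leq\mathcal{B}$ gives $\mathcal{A}'\leq\mathcal{A}''$). These parts are fine, and your treatment of i) via transfinite induction on the stages of $\mathrm{CS}$ is correct.

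The genuine gap is in the direction ``$\mathcal{A}\preccurlyeq^*\mathcal{B}$ implies b)$'$'' of ii). After reducing, by directedness, to showing that a single stage $\langle A,(a_i)_{i\leq j}\rangle_{\mathcal{B}}$ is a $\wedge$-subgeometry of $\mathcal{B}$, you assert that this holds ``by b)''. But condition b) of Definition \ref{def_of_strong} only says that each stage is a principal one-point extension of the \emph{immediately preceding} stage, i.e.\ $\langle A,(a_i)_{i<j}\rangle_{\mathcal{B}}\leq\langle A,(a_i)_{i\leq j}\rangle_{\mathcal{B}}$; it says nothing about $\mathcal{B}$, and since $(I,<)$ is an arbitrary linear order you cannot induct ``upwards to $\mathcal{B}$'' as in the well-ordered case of part i). Indeed, the statement that every stage is a $\wedge$-subgeometry of $\mathcal{B}$ is precisely an instance of b)$'$ (for the downward closed sets $\left\{ i \, | \, i\leq j \right\}$), so your argument is circular at exactly the point that carries the content of this direction. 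Moreover the claim genuinely needs condition a), which your argument uses only to say that the stages exhaust the points: Remark \ref{technical_req} gives a witness satisfying b) but not a) for which a downward closed set of the form $\left\{ i \, | \, i\leq j \right\}$ fails to generate a $\wedge$-subgeometry of $\mathcal{B}$. The paper's argument is by contradiction: if some downward closed $J$ fails, pick $x,y\in\langle A,(a_i)_{i\in J}\rangle_{\mathcal{B}}$ whose meet in this subgeometry has smaller rank than $(x\wedge y)^{\mathcal{B}}$; by a) there is $t\in I$ with $(x\wedge y)^{\mathcal{B}}\in\langle A,(a_i)_{i\leq t}\rangle_{\mathcal{B}}-\langle A,(a_i)_{i<t}\rangle_{\mathcal{B}}$; then $t\notin J$ (otherwise the meet would already lie in $\langle A,(a_i)_{i\in J}\rangle_{\mathcal{B}}$), hence every $i\in J$ satisfies $i<t$, so $x,y\in\langle A,(a_i)_{i<t}\rangle_{\mathcal{B}}$ and stage $t$ creates a new meet for two old elements, which by Lemma \ref{princ_pres_ext} contradicts its being a principal extension. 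Some such use of a) is unavoidable, and your union-of-stages reduction does not replace it.
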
 

	\begin{proof} It suffices to prove ii). The sufficiency of the condition b)' is clear. Since from b)' it follows in particular that for every $j \in I$ we have that $\langle A, (a_i)_{i < j} \rangle_{\mathcal{B}} \leq \mathcal{B}$, from which  $\langle A, (a_i)_{i < j} \rangle_{\mathcal{B}} \leq \langle A, (a_i)_{i \leq j} \rangle_{\mathcal{B}}$, because otherwise there would be $x, y \in \langle A, (a_i)_{i < j} \rangle_{\mathcal{B}}$ such that 
	$$r((x \wedge y)^{\langle A, (a_i)_{i < j} \rangle_{\mathcal{B}}}) < r((x \wedge y)^{\langle A, (a_i)_{i \leq j} \rangle_{\mathcal{B}}}) \leq r(x \wedge y)^{\mathcal{B}}),$$
	which contradicts the fact that $\langle A, (a_i)_{i < j} \rangle_{\mathcal{B}} \leq \mathcal{B}$. Thus, by Lemma \ref{princ_pres_ext}, for every $j \in I$ we have that $\langle A, (a_i)_{i \leq j} \rangle_{\mathcal{B}}$ is a principal extension of $\langle A, (a_i)_{i < j} \rangle_{\mathcal{B}}$, and so we can find the wanted $B_i$. We prove the necessity of the condition b)' under the assumption of condition a) = a)'. Suppose that there exists $J \subseteq I$ downward closed such that $\langle A, (a_i)_{i \in J} \rangle_{\mathcal{B}} \not\leq \mathcal{B}$. Then there exists $x, y \in \langle A, (a_i)_{i \in J} \rangle_{\mathcal{B}}$ such that $r((x \wedge y)^{\langle A, (a_i)_{i \in J} \rangle_{\mathcal{B}}}) < r((x \wedge y)^{\mathcal{B}})$. Let $t \in I$ be such that $(x \wedge y)^{\mathcal{B}} \in \langle A, (a_i)_{i \leq t}\rangle_{\mathcal{B}} - \langle A, (a_i)_{i < t}\rangle_{\mathcal{B}}$, then $t \not\in J$ because otherwise 
	$$r((x \wedge y)^{\langle A, (a_i)_{i \in J} \rangle_{\mathcal{B}}}) = r((x \wedge y)^{\langle A, (a_i)_{i \leq t} \rangle_{\mathcal{B}}}) = r((x \wedge y)^{\mathcal{B}}),$$
a contradiction.
Hence, being $J$ downward closed, for every $i \in J$ we have that $t > i$, and so $x, y \in \langle A, (a_i)_{i < t}\rangle_{\mathcal{B}}$. Thus, 
	$$(x \wedge y)^{\langle A, (a_i)_{i < t}\rangle_{\mathcal{B}}} \neq (x \wedge y)^{\mathcal{B}} = (x \wedge y)^{\langle A, (a_i)_{i \leq t} \rangle_{\mathcal{B}}},$$ 
and so again by Lemma \ref{princ_pres_ext} we are done, since then we cannot have that $\langle A, (a_i)_{i \leq t} \rangle_{\mathcal{B}}$ is a principal extension of $\langle A, (a_i)_{i < t} \rangle_{\mathcal{B}}$.
\end{proof}

	We then see that $\mathcal{A} \preccurlyeq \mathcal{B}$ implies $\mathcal{A} \preccurlyeq^* \mathcal{B}$ and, of course, if $P(\mathcal{B}) - P(\mathcal{A})$ is finite, then $\mathcal{A} \preccurlyeq \mathcal{B}$ iff $\mathcal{A} \preccurlyeq^* \mathcal{B}$.

\begin{remark}\label{technical_req} We explain the reasons behind the requirement a) in Definition \ref{def_of_strong}. Let $\preccurlyeq^*_{-}$ be the relation obtained from $\preccurlyeq^*$ by dropping condition a), $\mathcal{B}$ the rank $4$ geometry whose non-trivial lines (i.e. lines incident with at least three points) and hyperplanes are represented in Figure \ref{rank_4_ex}, $\mathcal{A} = \langle A, B, C, D, E_a = E, F_b = F, G_c = G \rangle_{\mathcal{B}}$ and $(I, <)$ the linear order $a < b < c < \cdots < -n < \cdots < -1 < 0$. Then $\mathcal{A} \preccurlyeq^*_{-} \mathcal{B}$ but the downward closed set $J = \left\{ a, b, c \right\} \subseteq I$ is such that $\langle A, (a_i)_{i \in J} \rangle_{\mathcal{B}} \not\leq \mathcal{B}$, in fact 
$$((A \vee B \vee G) \wedge (E \vee F \vee G))^{\mathcal{A}} = G \neq G \vee P_0 = ((A \vee B \vee G) \wedge (E \vee F \vee G))^{\mathcal{B}}.$$

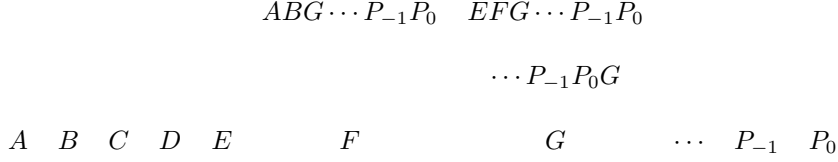
\begin{figure}[ht]
\begin{center}
\begin{tikzpicture}
\matrix (a) [matrix of math nodes, column sep=0.16cm, row sep=0.3cm]{
  &   &   &  & & ABG\cdots P_{-1}P_0 & EFG \cdots P_{-1}P_0 \\
  &   &   &  &    &   & \cdots P_{-1}P_0G  \\
A & B & C & D & E & F & G & \cdots & P_{-1} & P_0 & \\};
\end{tikzpicture}
\end{center} \caption{Reason I for requirement a).} \label{rank_4_ex} \end{figure}

\smallskip
\noindent
Another point against the relation $\preccurlyeq^*_{-}$ is that $\preccurlyeq^*_{-}$-witnesses need not determine isomorphism types, i.e. there are $\mathcal{B}, \mathcal{B}' \in \mathbf{K}_{0}^{3}$ such that $(a_i, B_i)_{i \in (I, <)}$ is a witness for both $\mathcal{A} \preccurlyeq^*_{-} \mathcal{B}$ and $\mathcal{A} \preccurlyeq^*_{-} \mathcal{B}'$, but $\mathcal{B} \not\cong \mathcal{B}'$. Let $\mathcal{B}$ and $\mathcal{B}'$ be the planes represented in Figure \ref{plane_for technical_req}, $\mathcal{A} = \langle A, B, C \rangle_{\mathcal{B}} = \langle A, B, C \rangle_{\mathcal{B}'}$, $(I, <)$ the linear order $\cdots < -n < \cdots < -1 < 0$ and $B_i = \left\{ P_{i-1}, P_{i-2} \right\}$. Then, in both $\mathcal{B}$ and $\mathcal{B}'$ the line $P_{-1} \vee P_0 = x$ is such that it does not exist $j \in I$ such that $x \in \langle A, (P_i)_{i \leq j}\rangle - \langle A, (P_i)_{i < j}\rangle$. Furthermore, $(P_i, B_i)_{i \in (I, <)}$ is a witness for both $\mathcal{A} \preccurlyeq_{-}^* \mathcal{B}$ and $\mathcal{A} \preccurlyeq_{-}^* \mathcal{B}'$, but obviously $\mathcal{B} \not\cong \mathcal{B}'$.
	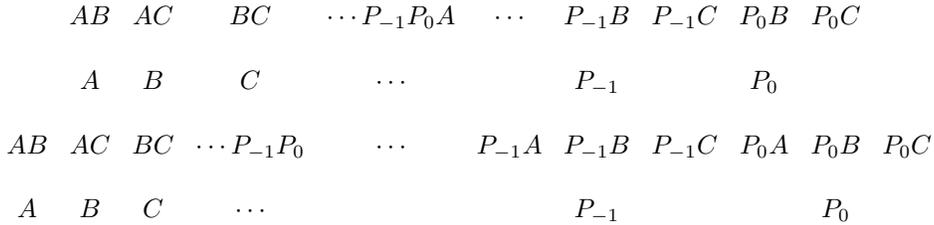
\begin{figure}[ht]
	\begin{center}
	\begin{tikzpicture}
\matrix (a) [matrix of math nodes, column sep=0.05cm, row sep=0.3cm]{
& AB & AC  & BC  & \cdots P_{-1} P_0 A & \cdots & P_{-1}B & P_{-1}C & P_{0}B & P_{0}C & \\
& A  & B   & C    &  \cdots  &  & P_{-1}  &  & P_0  & & \\
AB & AC  & BC  & \cdots P_{-1} P_0 & \cdots & P_{-1}A & P_{-1}B & P_{-1}C & P_{0}A & P_{0}B & P_{0}C & \\
A  & B   & C    &  \cdots  &  & & P_{-1}  &  & & P_0  & & \\};
\end{tikzpicture}
\end{center} \caption{Reason II for requirement a).} \label{plane_for technical_req} \end{figure}
\end{remark}

	Based on the two notions of embedding defined above, we define two classes of geometries of fixed rank $n$, the class of {\em principal geometries} of rank $n$ and the class of {\em quasi principal geometries} of rank $n$. 

\begin{definition}
\begin{enumerate}[i)]
\item The class $\mathbf{K}^{n}$ of {\em principal geometries} of rank $n$ is the class
	$$\left\{ \mathcal{A} \in \mathbf{K}^{n}_{0} \, | \, \exists \, \mathcal{B} \cong FG_n \text{ such that } \mathcal{B} \preccurlyeq \mathcal{A} \right\},$$
where $FG_n$ is the free geometry of rank $n$, i.e. the Boolean algebra on $n$ atoms.
\item The class $\mathbf{K}^{n}_{*}$ of {\em quasi principal geometries} of rank $n$ is the class
$$\left\{ \mathcal{A} \in \mathbf{K}^{n}_{0} \, | \, \exists \, \mathcal{B} \cong FG_n \text{ such that } \mathcal{B} \preccurlyeq^* \mathcal{A} \right\}.$$
\end{enumerate}
\end{definition}

	Unfortunately, we were not able to determine if $\mathbf{K}^{n} \neq \mathbf{K}^{n}_{*}$, but in Theorem \ref{th_fail_smooth} we will see that $(\mathbf{K}^{3}, \preccurlyeq) \neq (\mathbf{K}^{3}_{*}, \preccurlyeq^*)$. On the other hand, it is easy to see that $\mathbf{K}^{n} \neq \mathbf{K}^{n}_0$, e.g., for $n=3$, the Fano plane (Figure \ref{Fano}) is not in $\mathbf{K}^{3}$ because it can be obtained from $AF(2)$ with three non-principal one-point extensions (cf. Example \ref{example_Fano}). We will now focus on the study of $(\mathbf{K}^{n}, \preccurlyeq)$ and come back to the study of $(\mathbf{K}^{n}_*, \preccurlyeq^*)$ later on (only for $n = 3$ in this case). 

\begin{lemma}\label{finitary_change_order} Let $X$ and $Y$ be two finite sets of ordinals, $(a_i, B_i)_{i \in X}$ a construction over $\mathcal{A}$ and $\pi: Y \rightarrow X$ a bijection such that for every $j \in Y$, $B_{\pi(j)} \subseteq P(\mathcal{A}) \cup \left\{ a_{\pi(i)} \, | \, i \in j \cap Y \right\}$. Then
	$$\mathrm{CS}(\mathcal{A}, (a_i, B_i)_{i \in X}) = \mathrm{CS}(\mathcal{A}, (a_{\pi(i)}, B_{\pi(i)})_{i \in Y}) .$$
\end{lemma}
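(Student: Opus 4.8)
The statement is a finitary commutativity/reordering result for sequences of principal one-point extensions, so the natural plan is to reduce to the case where $Y$ and $X$ differ by a single adjacent transposition and then invoke Lemma~\ref{switchining}. First I would observe that since $X$ is finite, any bijection $\pi\colon Y\to X$ satisfying the stated monotonicity condition (that each $B_{\pi(j)}$ only mentions points of $\mathcal{A}$ and the $a_{\pi(i)}$ for $i$ earlier in $Y$) can be obtained from the identity by a finite sequence of swaps of \emph{consecutive} indices, where at each stage the intermediate ordering still satisfies the analogous monotonicity condition. This is a standard bubble-sort argument: one moves the $\pi$-preimages into place one at a time, and the constraint $B_{\pi(j)}\subseteq P(\mathcal{A})\cup\{a_{\pi(i)}\mid i\in j\cap Y\}$ is exactly what guarantees that the element being swapped backwards does not have to cross any extension that depends on it. Hence by induction on the number of such transpositions it suffices to treat the single-swap case.

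Second, for the single-swap case, suppose $(a_i,B_i)_{i\in X}$ is a construction over $\mathcal{A}$ and we interchange two consecutive steps, say the step adding $a$ over $\bigvee B_a$ and the step adding $b$ over $\bigvee B_b$, where the monotonicity hypothesis forces $a\notin B_b$ and $b\notin B_a$, so both $B_a$ and $B_b$ are contained in the point set of the geometry $\mathcal{A}'$ obtained by performing all earlier steps. Writing $\mathcal{A}'' = \mathcal{A}'\oplus_{\bigvee B_a}a\oplus_{\bigvee B_b}b$ and $\widetilde{\mathcal{A}}'' = \mathcal{A}'\oplus_{\bigvee B_b}b\oplus_{\bigvee B_a}a$, I need to check that these two geometries coincide (identifying geometries isomorphic via the identity on points, as per the convention fixed after the Birkhoff--Whitney theorem). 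Here one must be slightly careful: $\bigvee B_a$ is computed in $\mathcal{A}'$, but after adding $b$ first the flat $\bigvee B_a$ still exists in $\mathcal{A}'\oplus_{\bigvee B_b}b$ with the same rank (since $b\notin B_a$, adding $b$ freely under $\bigvee B_b$ does not lower the rank of $\bigvee B_a$ nor merge it with anything relevant), and symmetrically for $\bigvee B_b$ after adding $a$. Once this is in place, Lemma~\ref{switchining} applied inside $\mathcal{A}'$ (with $r(\bigvee B_a),r(\bigvee B_b)\geq 2$, which holds because $B_a,B_b\in\mathcal{P}_{2/n}(\cdots)$ have at least two elements and are therefore flats of rank at least $2$) gives $\mathcal{A}'' = \widetilde{\mathcal{A}}''$. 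Finally, performing all the later steps of the construction (whose defining sets $B_i$ for later $i$ are unaffected, since the point sets of $\mathcal{A}''$ and $\widetilde{\mathcal{A}}''$ agree) yields equality of the two full constructions.

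The main obstacle I anticipate is the bookkeeping in the reduction to adjacent transpositions, specifically verifying that the intermediate orderings produced during the bubble-sort still satisfy the monotonicity hypothesis so that Lemma~\ref{switchining} applies at each swap — this is where the hypothesis on $\pi$ is really used and where a careless argument could go wrong. The actual geometric content (the single swap) is essentially immediate from Lemma~\ref{switchining}, modulo the remark that the flats $\bigvee B_a$ and $\bigvee B_b$ are genuinely available with the correct rank after the first of the two extensions; this last point is exactly the reason the $B_i$ are required to lie in $\mathcal{P}_{2/n}$ rather than merely $\mathcal{P}_{1/n}$, since a rank-$1$ flat would not be eligible for Lemma~\ref{switchining}. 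Everything else — the limit/base cases of the original construction, and the propagation of the swap through the untouched tail — is routine.
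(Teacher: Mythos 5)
Your proposal is correct and follows essentially the same route as the paper: the paper's proof is an induction on $|X|$ that, in the nontrivial case, uses the induction hypothesis to rearrange initial segments and a single application of Lemma~\ref{switchining} to swap two consecutive principal extensions, which is exactly your reduction to adjacent transpositions (organized as a bubble sort rather than as an explicit induction). The points you flag — that intermediate orders remain valid constructions, that the relevant flats and their joins are unchanged after the first of the two swapped extensions, and that $|B_i|\geq 2$ gives the rank-$\geq 2$ hypothesis of Lemma~\ref{switchining} — are precisely the checks implicit in the paper's argument.
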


	\begin{proof} Let $X = \left\{ x_0 < \cdots < x_{n-1} \right\}$, $Y = \left\{ y_0 < \cdots < y_{n-1} \right\}$ and $w: n \rightarrow n$ so that $\pi(y_i) = x_{w_{(i)}}$. For $i < n$, let $a_{x_i} = c_i$ and $B_{x_i} = D_i$. We prove the lemma by induction on $|X| = n$. If $n \leq 1$ there is nothing to prove. Suppose then that $n \geq 2$.
\newline {\bf Case A.} $w(n-1) = n-1$. That is, $w \restriction n-1: n-1 \rightarrow n-1$. Thus, using the induction assumption we have that
\[ \begin{array}{rcl}
\mathrm{CS}(\mathcal{A}, (a_i, B_i)_{i \in X}) & = & 
\mathrm{CS}(\mathcal{A}, (c_{i}, D_{i})_{i < n}) \\
& = & \mathrm{CS}(\mathcal{A}, (c_i, D_i)_{i < n-1}) \oplus_{D_{n-1}} c_{n-1} \\ 
& = & \mathrm{CS}(\mathcal{A}, (c_{w(i)}, D_{w(i)})_{i < n-1} \oplus_{D_{n-1}} c_{n-1} \\ 
& = & \mathrm{CS}(\mathcal{A}, (c_{w(i)}, D_{w(i)})_{i < n} \\ 
& = & \mathrm{CS}(\mathcal{A}, (a_{\pi(i)}, B_{\pi(i)})_{i \in Y}).
\end{array} \] 
\newline {\bf Case B.} $w(n-1) \neq n-1$.
Of course to every $m \leq n$ and $X =  k_0 < \cdots < k_{m-1} \subseteq n$, it corresponds a bijection $t: m \rightarrow X$, namely $t(i) = k_{(i)}$. Let then
\begin{enumerate}[1)]
\item $(n, <)$ be the linear order $w(0) < \cdots < w(n-1)$;
\item $\sigma: n-1 \rightarrow n-1$ the bijection corresponding to $(n - \left\{ n-1 \right\}, <)$;
\item $\tau: n-1 \rightarrow n - \left\{ \sigma(n-2) \right\}$ the bijection corresponding to $(n - \left\{ \sigma(n-2), n-1 \right\}, <)$ concatenated with $\left\{ n-1 \right\}$, i.e. with $n-1$ as maximum;
\item $\rho: n-1 \rightarrow n - \left\{ \sigma(n-2) \right\}$ the bijection corresponding to $(n - \left\{ \sigma(n-2) \right\}, <)$.
\end{enumerate}
Notice that $\sigma(n-2) = w(n-1) \neq n-1$. Now, using the induction assumption twice and Lemma \ref{switchining} we have that
\[ \begin{array}{rcl}
\mathrm{CS}(\mathcal{A}, (a_i, B_i)_{i \in X}) & = & 
\mathrm{CS}(\mathcal{A}, (c_{i}, D_{i})_{i < n}) \\
& = & \mathrm{CS}(\mathcal{A}, (c_i, D_i)_{i < n-1}) \oplus_{D_{n-1}} c_{n-1} \\ 
& = & \mathrm{CS}(\mathcal{A}, (c_{\sigma(i)}, D_{\sigma(i)})_{i < n-1} \oplus_{D_{n-1}} c_{n-1} \\ 
& = & \mathrm{CS}(\mathcal{A}, (c_{\sigma(i)}, D_{\sigma(i)})_{i < n-2} \oplus_{D_{\sigma(n-2)}} c_{\sigma(n-2)} \oplus_{D_{n-1}} c_{n-1} \\ 
& = & \mathrm{CS}(\mathcal{A}, (c_{\sigma(i)}, D_{\sigma(i)})_{i < n-2} \oplus_{D_{n-1}} c_{n-1} \oplus_{D_{\sigma(n-2)}} c_{\sigma(n-2)} \\ 
& = & \mathrm{CS}(\mathcal{A}, (c_{\tau(i)}, D_{\tau(i)})_{i < n-1} \oplus_{D_{\sigma(n-2)}} c_{\sigma(n-2)} \\ 
& = & \mathrm{CS}(\mathcal{A}, (c_{\rho(i)}, D_{\rho(i)})_{i < n-1} \oplus_{D_{\sigma(n-2)}} c_{\sigma(n-2)} \\ 
& = & \mathrm{CS}(\mathcal{A}, (c_{w(i)}, D_{w(i)})_{i < n} \\ 
& = & \mathrm{CS}(\mathcal{A}, (a_{\pi(i)}, B_{\pi(i)})_{i \in Y}).
\end{array} \] 
\end{proof}

\begin{definition} Let $(a_i, B_i)_{i < \alpha}$ be a construction over $\mathcal{A}$ and $X \subseteq \alpha$. We say that $X$ is closed if for every $j \in X$, $B_{j} \subseteq P(\mathcal{A}) \cup \left\{ a_{i} \, | \, i \in j \cap X \right\}$. 
\end{definition}

\begin{lemma}\label{Konig} Let $(a_i, B_i)_{i < \alpha}$ be a construction over $\mathcal{A}$ and $X' \subseteq \alpha$. Then there is closed $X \subseteq \alpha$ such that $X' \subseteq X$ and $|X|$ is finite if $X'$ is finite, and $|X| = |X'|$ otherwise.
\end{lemma}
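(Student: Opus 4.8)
The plan is to reduce the statement to König's Lemma, which explains the name of the lemma. Recall that, by definition of a construction, each $B_j$ satisfies $B_j \subseteq P(\mathcal{A}) \cup \left\{ a_i \, | \, i < j \right\}$ and, being an element of $\mathcal{P}_{2/n}(\cdot)$, has $|B_j| \leq n$. Define the \emph{dependency function} $f \colon \alpha \to [\alpha]^{\leq n}$ (subsets of $\alpha$ of size at most $n$) by
$$f(j) = \left\{ i < j \, | \, a_i \in B_j \right\}.$$
Unwinding the definition of ``closed'', a set $X \subseteq \alpha$ is closed precisely when $f(j) \subseteq X$ for every $j \in X$. So the task is to find a closed superset of $X'$ of the prescribed cardinality.

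For each ordinal $\beta < \alpha$ let $T_\beta$ be the tree whose nodes are the finite strictly decreasing sequences $(\beta = \gamma_0 > \gamma_1 > \cdots > \gamma_k)$ with $\gamma_{m+1} \in f(\gamma_m)$ for all $m < k$ (strict decrease being automatic since $f(\gamma_m) \subseteq \gamma_m$), ordered by end-extension. Then $T_\beta$ is finitely branching, since a node ending in $\gamma_k$ has exactly $|f(\gamma_k)| \leq n$ immediate successors; and $T_\beta$ has no infinite branch, because such a branch would be an infinite strictly decreasing sequence of ordinals, contradicting well-foundedness. By König's Lemma, $T_\beta$ is finite. Let $S_\beta \subseteq \alpha$ be the (finite) set of ordinals occurring in the nodes of $T_\beta$; note $\beta \in S_\beta$, as $(\beta)$ is the root.

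Now set $X = \bigcup_{\beta \in X'} S_\beta$. Then $X' \subseteq X$. The set $X$ is closed: if $j \in X$, say $j \in S_\beta$ with $\beta \in X'$, then some node of $T_\beta$ has the form $(\beta, \ldots, j)$, and for each $i \in f(j)$ the sequence $(\beta, \ldots, j, i)$ is again a node of $T_\beta$, whence $f(j) \subseteq S_\beta \subseteq X$. Finally, for cardinality: each $S_\beta$ is finite, so if $X'$ is finite then $X$ is a finite union of finite sets, hence finite; and if $X'$ is infinite then $|X| \leq |X'| \cdot \aleph_0 = |X'|$, while $X' \subseteq X$ forces $|X| = |X'|$. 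The argument is routine; the only point worth stressing is that the blind closure of $X'$ under $f$ (iterating $X_{m+1} = X_m \cup \bigcup_{j \in X_m} f(j)$) could a priori yield an infinite set even from a finite $X'$, and it is exactly the conjunction of finite branching of the dependency relation (from $|B_j| \leq n$) with well-foundedness of the index set that prevents this — which is precisely the content that König's Lemma packages. No real obstacle arises beyond this observation.
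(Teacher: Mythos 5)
Your proof is correct and follows exactly the route the paper intends: the paper's entire proof is the citation ``K\"onig's Lemma,'' and your dependency-tree argument (finite branching from $|B_j| \leq n$, no infinite branch by well-foundedness of the ordinals, hence each tree $T_\beta$ is finite) is precisely the unpacking of that citation, with the cardinality bookkeeping handled correctly in both the finite and infinite cases.
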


\begin{proof} K\"onig's Lemma.
\end{proof}

	We need a small technical proposition.

\begin{proposition}\label{closed_implies_subgeo}  Let $(a_i, B_i)_{i < \alpha}$ be a construction over $\mathcal{A}$, $X \subseteq \alpha$ finite and closed, and set $\mathrm{CS}(\mathcal{A}, (a_i, B_i)_{i < \alpha}) = \mathcal{B}$. Then 
	$$\mathrm{CS}(\mathcal{A}, (a_i, B_i)_{i \in X}) = \langle A, (a_{i})_{i \in X }\rangle_{\mathcal{B}}.$$
\end{proposition}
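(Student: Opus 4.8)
The plan is to prove this by induction on $|X|$, using Lemma \ref{finitary_change_order} to rearrange the construction so that the indices in $X$ come first. Let me set up the notation: write $\mathcal{B}_j = \mathrm{CS}(\mathcal{A}, (a_i, B_i)_{i < j})$ for the intermediate stages of the full construction, and let $\beta = \max(X) + 1$ (if $X = \emptyset$ both sides are $\mathcal{A}$ and there is nothing to prove). Since $X$ is finite and closed, König's Lemma (Lemma \ref{Konig}) style reasoning is not needed here because $X$ is already given as closed; instead, the key combinatorial input is Lemma \ref{finitary_change_order}, which lets us reorder any finite initial segment of the construction.

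First I would reduce to the case where $X$ is an initial segment of $\alpha$. Pick any $\gamma$ with $X \subseteq \gamma$ and $\gamma$ finite (possible since $X$ is finite — e.g. $\gamma = \max(X)+1$), and enlarge $X$ to a finite closed set $X^+ \subseteq \gamma$ containing all of $\gamma \cap X = X$; actually the cleanest route is: by Lemma \ref{finitary_change_order}, applied to the bijection $\pi$ that lists the elements of $X$ first (in their original order) followed by the elements of $\gamma \setminus X$ (in their original order), we have that $\mathcal{B}_\gamma = \mathrm{CS}(\mathcal{A}, (a_i, B_i)_{i < \gamma})$ equals the construction performed in this new order — here we use that $X$ is closed so that each $B_{\pi(j)}$ for $\pi(j) \in X$ is already available, and that for the tail elements all their $B$-sets are available since they were available in the original order and all of $X$ now precedes them. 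Consequently $\mathrm{CS}(\mathcal{A}, (a_i,B_i)_{i \in X})$ is literally the intermediate stage $\mathcal{B}'_{|X|}$ of the reordered construction after the first $|X|$ steps.

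Next I would identify $\langle A, (a_i)_{i \in X}\rangle_{\mathcal{B}}$ with that same intermediate stage. On the one hand, $\mathrm{CS}(\mathcal{A}, (a_i,B_i)_{i \in X})$ is, as a lattice, generated as a $\vee$-semilattice by $P(\mathcal{A}) \cup \{a_i : i \in X\}$ — this follows inductively from the explicit description of one-point extensions (each $\oplus_{\bigvee B_i} a_i$ adds the new point $a_i$ and the new joins $a_i \vee x$, all of which lie in the $\vee$-semilattice generated by the old points together with $a_i$). On the other hand I must check that the ambient meets agree, i.e. that $\mathrm{CS}(\mathcal{A},(a_i,B_i)_{i\in X})$ is a $\wedge$-subgeometry of $\mathcal{B}$, not merely a subgeometry: this is exactly the content of Lemma \ref{princ_pres_ext} applied stage by stage, since every extension used is principal, so that $\mathcal{B}_j \leq \mathcal{B}_{j+1}$ for all $j$, hence by composing and using that a $\wedge$-subgeometry of a $\wedge$-subgeometry is a $\wedge$-subgeometry, $\mathrm{CS}(\mathcal{A},(a_i,B_i)_{i\in X}) \leq \mathcal{B}$. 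Once we know the smaller lattice sits inside $\mathcal{B}$ as a $\wedge$-subgeometry and is generated by the right set of atoms, it is by definition equal to $\langle A, (a_i)_{i\in X}\rangle_{\mathcal{B}}$ (recall the convention, stated after the Birkhoff–Whitney theorem, that we identify geometries isomorphic via a point-fixing isomorphism).

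The main obstacle I anticipate is bookkeeping in the reordering step: one must verify that after moving the elements of $X$ to the front, the permuted sequence is still a legitimate construction, i.e. that the hypothesis "$B_{\pi(j)} \subseteq P(\mathcal{A}) \cup \{a_{\pi(i)} : i \in j \cap Y\}$" of Lemma \ref{finitary_change_order} genuinely holds — for indices in $X$ this is precisely closedness of $X$, but for the tail indices $i \in \gamma \setminus X$ one needs that their defining sets $B_i$, which in the original order referred only to points $a_k$ with $k < i$, still refer only to points now preceding them; this is automatic because we keep the relative order within $\gamma \setminus X$ and all of $X$ now precedes $\gamma \setminus X$, but it is the kind of detail that is easy to state imprecisely. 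A secondary subtlety is that the one-point extension $\oplus_{\bigvee B_i}a_i$ is taken with $\bigvee B_i$ computed in the current intermediate lattice; since all the lattices in play are $\wedge$-subgeometries of one another and of $\mathcal{B}$, the value of $\bigvee B_i$ is the same whether computed in $\mathcal{B}_i$, in the reordered stage, or in $\mathcal{B}$, so this causes no trouble, but it should be remarked explicitly. Modulo these checks the proof is routine.
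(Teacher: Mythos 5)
Your argument hinges on reordering the construction so that the indices of $X$ come first, justified by Lemma \ref{finitary_change_order} after asserting that $\gamma=\max(X)+1$ is finite ``since $X$ is finite''. That assertion is false in general: $X$ is a finite \emph{subset} of the ordinal $\alpha$, but its elements may be infinite ordinals (e.g.\ $X=\left\{ \omega \right\}$ inside a construction of length $\omega+\omega$), so the initial segment $\left\{ i<\alpha \, : \, i\le \max X \right\}$ that you need to permute is typically infinite, and the tail $\gamma\setminus X$ is then an infinite set. Lemma \ref{finitary_change_order} is stated (and proved, by induction on $|X|$ via Lemma \ref{switchining}) only for finite index sets, so it cannot perform this reordering. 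The tool that would do it is the infinitary reordering Lemma \ref{change_order}; but in the paper that lemma is proved \emph{using} Proposition \ref{closed_implies_subgeo} (it supplies the vertical arrows in the diagram there), so appealing to it here would be circular unless you give an independent proof, which your proposal does not.

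The paper avoids reordering altogether: it argues by induction on $|X|=n$, writing $X=\left\{ k_0<\cdots<k_{n-1} \right\}$ and showing $\mathrm{CS}(\mathcal{A},(a_{k_i},B_{k_i})_{i<j})=\langle A,(a_{k_i})_{i<j}\rangle_{\mathcal{B}}$ for all $j\le n$. The key point at the successor step is that, by closedness of $X$, $B_{k_i}\subseteq P(\mathcal{A})\cup\left\{ a_t \, : \, t\in k_i\cap X \right\}$, so $\bigvee B_{k_i}$ has the same value computed in $\mathcal{A}^*=\langle A,(a_{k_0},\dots,a_{k_{i-1}})\rangle_{\mathcal{B}}$, in the stage $\mathcal{A}_{k_i}$ of the full construction, and in $\mathcal{B}$; hence the cut $\left\{ x\in A^* \, : \, a_{k_i}\le x \right\}$ is the principal cut of $\mathcal{A}^*$ generated by $\bigvee B_{k_i}$, and Crapo's theorem identifies $\langle A,(a_{k_0},\dots,a_{k_i})\rangle_{\mathcal{B}}$ with $\mathcal{A}^*\oplus_{\bigvee B_{k_i}}a_{k_i}$, which by the induction hypothesis is the next stage of $\mathrm{CS}(\mathcal{A},(a_{k_i},B_{k_i})_{i\in X})$. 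Your second half (identifying an intermediate stage of a construction with the subgeometry of $\mathcal{B}$ generated by the corresponding points) is essentially sound, but it cannot rescue the argument while the reordering step is unavailable; as it stands the proposal has a genuine gap at its central step.
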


\begin{proof} Let $X = \left\{ k_0 < \cdots < k_{n-1} \right\}$. By induction on $j \leq n$ we we show that 
	$$\mathrm{CS}(\mathcal{A}, (a_{k_i}, B_{k_i})_{i < j}) = \langle A, (a_{k_i})_{i < j }\rangle_{\mathcal{B}}.$$
$j = 0$). $\mathrm{CS}(\mathcal{A}, (a_{k_i}, B_{k_i})_{i < j}) = \mathcal{A} = \langle A \rangle_{\mathcal{B}}$. 
\newline $j = i+1$). By construction,
	$$\mathcal{A}_{k_i + 1} = \mathcal{A}_{k_i} \oplus_{\bigvee B_{k_i}} a_{k_i},$$
where the notation is as in Definition \ref{construction}.
Now,
	$$M = \left\{ x \in A_{k_i} \, | \, a_{k_i} \leq x \right\} = \left\{ x \in A_{k_i} \, | \, (\bigvee B_{k_i})^{\mathcal{B}} \leq x \right\}.$$
By the coherence of the subgeometry relation,
	$$ \mathcal{A}^* = \langle A, (a_{k_0}, ..., a_{k_{i-1}}) \rangle_{\mathcal{B}}$$
is a subgeometry of $\mathcal{A}_{k_i}$.
Thus,
	$$M' = \left\{ x \in A^* \, | \, a_{k_i} \leq x \right\} \supseteq M. $$
Furthermore, by the fact that $X$ is closed, $B_{k_i} \subseteq \left\{ a_t \, | \, t \in k_i \cap X \right\}$. Hence, 
	$$M' = \left\{ x \in A^* \, | \, (\bigvee B_{k_i})^{\mathcal{A}^*} = (\bigvee B_{k_i})^{\mathcal{A}_{k_i}} = (\bigvee B_{k_i})^{\mathcal{B}} \leq x \right\},$$
and so $M'$ is principal cut of $A^*$.
Finally,
	\[ \begin{array}{rcl}
		\langle A, (a_{k_0}, ..., a_{k_{i-1}}, a_{k_{i}}) \rangle_{\mathcal{B}} & = & \langle A, (a_{k_0}, ..., a_{k_{i-1}}) \rangle_{\mathcal{B}} \oplus_{M'} a_{k_i} \\
						& = & \langle A, (a_{k_0}, ..., a_{k_{i-1}}) \rangle_{\mathcal{B}} \oplus_{\bigvee B_{k_i}} a_{k_i} \\		 				
		 				& = & \mathrm{CS}(\mathcal{A}, (a_{k_0}, B_{k_0}), ..., (a_{k_{i-1}}, B_{k_{i-1}})) \oplus_{\bigvee B_{k_i}} a_{k_i} \\
						& = & \mathrm{CS}(\mathcal{A}, (a_{k_0}, B_{k_0}), ..., (a_{k_i}, B_{k_i})) .\end{array} \] 
\end{proof}

	We now have all the ingredients to prove an infinitary version on Lemma \ref{finitary_change_order}.
	
\begin{lemma}\label{change_order} Let $(a_i, B_i)_{i < \alpha}$ be a construction over $\mathcal{A}$ and $\pi: \beta \rightarrow \alpha$ a bijection such that for every $j < \beta$, $B_{\pi(j)} \subseteq P(\mathcal{A}) \cup \left\{ a_{\pi(i)} \, | \, i < j \right\}$. Then
	$$\mathrm{CS}(\mathcal{A}, (a_i, B_i)_{i < \alpha}) = \mathrm{CS}(\mathcal{A}, (a_{\pi(i)}, B_{\pi(i)})_{i < \beta}) .$$
\end{lemma}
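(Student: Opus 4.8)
The plan is to reduce the infinitary statement to the finitary one (Lemma \ref{finitary_change_order}) by a localization argument, using Proposition \ref{closed_implies_subgeo} to identify the pieces of the two constructions, and then to check that two geometric lattices that agree on all finitely generated subgeometries must coincide. Write $\mathcal{B} = \mathrm{CS}(\mathcal{A}, (a_i, B_i)_{i < \alpha})$ and $\mathcal{B}' = \mathrm{CS}(\mathcal{A}, (a_{\pi(i)}, B_{\pi(i)})_{i < \beta})$; note that both have the same point set, namely $P(\mathcal{A}) \cup \{ a_i \mid i < \alpha \}$, so by the convention of Section \ref{geo_lat} (identifying geometries via isomorphisms fixing the points) it suffices to show that $\mathcal{B}$ and $\mathcal{B}'$ have the same closed sets, or equivalently the same meet operation.

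First I would observe that since a geometric lattice is a point lattice, every element is the join of the points below it, so it is enough to check that for all $x, y$ the meets agree: $(x \wedge y)^{\mathcal{B}} = (x \wedge y)^{\mathcal{B}'}$. Fix such $x, y$. Because $\mathcal{B}$ is built as a directed union of the finitely-generated stages, there is a finite $X' \subseteq \alpha$ with $x, y, (x\wedge y)^{\mathcal{B}} \in \langle A, (a_i)_{i \in X'} \rangle_{\mathcal{B}}$; by Lemma \ref{Konig} we may enlarge $X'$ to a finite \emph{closed} $X \subseteq \alpha$. Applying $\pi^{-1}$ and Lemma \ref{Konig} again on the $\mathcal{B}'$ side, we may further arrange that $Y := \pi^{-1}[X]$ is closed as a subset of $\beta$ (and, enlarging once more and closing up on both sides alternately, that $X$ is closed, $Y = \pi^{-1}[X]$ is closed, and $\pi\restriction Y : Y \to X$ is a bijection). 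Now Proposition \ref{closed_implies_subgeo} gives
\[
\mathrm{CS}(\mathcal{A}, (a_i, B_i)_{i \in X}) = \langle A, (a_i)_{i \in X} \rangle_{\mathcal{B}}, \qquad
\mathrm{CS}(\mathcal{A}, (a_{\pi(i)}, B_{\pi(i)})_{i \in Y}) = \langle A, (a_i)_{i \in X} \rangle_{\mathcal{B}'},
\]
and Lemma \ref{finitary_change_order} (whose hypothesis $B_{\pi(j)} \subseteq P(\mathcal{A}) \cup \{ a_{\pi(i)} \mid i \in j \cap Y \}$ holds because $Y$ is closed) identifies the two left-hand sides. Hence $\langle A, (a_i)_{i \in X} \rangle_{\mathcal{B}} = \langle A, (a_i)_{i \in X} \rangle_{\mathcal{B}'}$ as geometric lattices on the same point set, so in particular they compute $x \wedge y$ the same way; since $\langle A, (a_i)_{i \in X} \rangle_{\mathcal{B}} \leq \mathcal{B}$ (coherence of the subgeometry relation, and the fact that a closed set is a $\wedge$-subgeometry — this is exactly how $\mathrm{CS}$ is assembled) and likewise on the $\mathcal{B}'$ side, this meet equals $(x\wedge y)^{\mathcal{B}}$ and $(x \wedge y)^{\mathcal{B}'}$ respectively, and we are done.

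The step I expect to be the main obstacle is the bookkeeping around Lemma \ref{Konig}: I need simultaneously that $X \subseteq \alpha$ is closed, that $Y = \pi^{-1}[X] \subseteq \beta$ is closed, and that $X$ contains a prescribed finite set — and closing up on one side can destroy closedness on the other, so this requires a back-and-forth: alternately apply Lemma \ref{Konig} to close $X$ in $\alpha$, then to close $\pi^{-1}[X]$ in $\beta$, then to close the image again in $\alpha$, etc., and observe the process stabilizes after finitely many steps because each set stays finite and the sequence of finite sets is increasing and bounded in the relevant sense (each element, once included, pulls in only finitely many predecessors under the construction data). Once this finite stabilization is in hand, the rest is a direct invocation of Proposition \ref{closed_implies_subgeo} and Lemma \ref{finitary_change_order}. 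A secondary subtlety is making sure the chosen finite closed set actually \emph{contains} witnesses for $x$, $y$ and $(x\wedge y)^{\mathcal{B}}$ so that Proposition \ref{closed_implies_subgeo} sees the meet; this is handled by including, for each of these three elements, a finite set of indices generating it, before closing up.
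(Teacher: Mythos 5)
Your overall strategy is the paper's: localize to a finite closed set via Lemma \ref{Konig}, identify the two finite stages using Proposition \ref{closed_implies_subgeo} and Lemma \ref{finitary_change_order}, and transfer the conclusion to the full constructions. However, your final transfer step has a genuine gap. Proposition \ref{closed_implies_subgeo} identifies $\mathrm{CS}(\mathcal{A},(a_i,B_i)_{i\in X})$ with $\langle A,(a_i)_{i\in X}\rangle_{\mathcal{B}}$ only as a \emph{subgeometry} (a sub-$\vee$-semilattice generated by points, Definition \ref{def_meet_subgeo}); a subgeometry need not compute meets as in the ambient lattice, so equality of the two finite stages does not by itself give $(x\wedge y)^{\mathcal{B}}=(x\wedge y)^{\mathcal{B}'}$. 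The fact you appeal to --- that a closed piece of a construction is a $\wedge$-subgeometry of the whole --- is exactly (part of) Lemma \ref{closed_implies_strong_subgeo}, which the paper proves \emph{after} and \emph{by means of} the present lemma, so invoking it here is circular, and it is not obvious how to obtain it independently. The paper sidesteps this by comparing not meets but \emph{dependence}: if the two constructions differed, some finite set of points $I_0$ would be independent in one and dependent in the other; independence of a finite point set is absolute between a geometry and any subgeometry whose point set contains it (matroid restriction preserves rank), so the subgeometry-level identification from Proposition \ref{closed_implies_subgeo} already yields the contradiction. Your argument is repaired by making the same switch from meets (or closed sets) to dependence of finite point sets; as written, the reduction ``point lattice, hence it suffices that meets agree'' is also shaky, since which elements $x,y$ exist and what their meets are is precisely what is at stake.

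A second, minor point: the back-and-forth you anticipate as the main obstacle is unnecessary. Once $X\subseteq\alpha$ is finite and closed, $\pi^{-1}(X)$ is automatically closed in $\beta$: if $a_{\pi(i)}\in B_{\pi(j)}$ with $\pi(j)\in X$, then $\pi(i)\in X$ because $X$ is closed for the $\alpha$-indexed construction, and $i<j$ by the hypothesis on $\pi$ in the statement of the lemma. This is the paper's one-line Claim, so a single application of Lemma \ref{Konig} on the $\alpha$-side suffices; your iterated closing-up is harmless but does no work.
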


\begin{proof} Suppose not. By symmetry, we can assume that there is $I_0 \subseteq_{\omega} P(\mathcal{A}) \cup \left\{ a_i \, | \, i < \alpha \right\}$ such that $I_0$ is independent in $\mathrm{CS}(\mathcal{A}, (a_i, B_i)_{i < \alpha})$ but dependent in $\mathrm{CS}(\mathcal{A}, (a_{\pi(i)}, B_{\pi(i)})_{i < \beta})$. Let $X \subseteq_{\omega} \alpha$ be closed and such such that $I_0 \subseteq P(\mathcal{A}) \cup \left\{ a_i \, | \, i \in X \right\}$, this is possible by Lemma \ref{Konig}.

\begin{claim} For every $j \in \pi^{-1}(X)$, $B_{\pi(j)} \subseteq P(\mathcal{A}) \cup \left\{ a_{\pi(i)} \, | \, i \in j \cap \pi^{-1}(X) \right\}$. I.e. $\pi^{-1}(X) \subseteq \beta$ is closed. 
\end{claim}

\begin{claimproof} Standard.
\end{claimproof}

\noindent Let $\pi'= \pi\restriction{\pi^{-1}(X)}: \pi^{-1}(X) \rightarrow X$. By the fact that $X$ is closed, $\mathrm{CS}(\mathcal{A}, (a_i, B_i)_{i \in X})$ is well-defined, and by the claim $\pi'$ is as in Lemma \ref{finitary_change_order}. Thus, 
\begin{equation*}
\begin{tikzcd}[column sep=6pc]
\mathrm{CS}(\mathcal{A}, (a_i, B_i)_{i \in X}) \arrow{r}{=} \arrow{d}{\subseteq} &
 \mathrm{CS}(\mathcal{A}, (a_{\pi(i)}, B_{\pi(i)})_{i \in \pi^{-1}(X)}) \arrow{d}{\subseteq} \\
\mathrm{CS}(\mathcal{A}, (a_i, B_i)_{i < \alpha})  & \mathrm{CS}(\mathcal{A}, (a_{\pi(i)}, B_{\pi(i)})_{i < \beta}),
\end{tikzcd}
\end{equation*}
where the vertical arrows exist by Proposition \ref{closed_implies_subgeo} (as already noticed not only $X$ is closed, but also $\pi^{-1}(X)$ is).  But this is obviously a contradiction, since then $I_0$ is at the same time independent and dependent in $\mathrm{CS}(\mathcal{A}, (a_i, B_i)_{i \in X}) = \mathrm{CS}(\mathcal{A}, (a_{\pi(i)}, B_{\pi(i)})_{i \in \pi^{-1}(X)})$, being it independent in $\mathrm{CS}(\mathcal{A}, (a_i, B_i)_{i < \alpha})$ and dependent in $\mathrm{CS}(\mathcal{A}, (a_{\pi(i)}, B_{\pi(i)})_{i < \beta})$.
\end{proof}

	We prove an improved version of Proposition \ref{closed_implies_subgeo}.

\begin{lemma}\label{closed_implies_strong_subgeo} Let $(a_i, B_i)_{i < \alpha}$ be a construction over $\mathcal{A}$, $X \subseteq \alpha$ closed and set $\mathrm{CS}(\mathcal{A}, (a_i, B_i)_{i < \alpha}) = \mathcal{B}$. Then 
	$$\mathcal{A} \preccurlyeq \mathrm{CS}(\mathcal{A}, (a_i, B_i)_{i \in X}) = \langle A, (a_{i})_{i \in X }\rangle_{\mathcal{B}} \preccurlyeq \mathcal{B}.$$
\end{lemma}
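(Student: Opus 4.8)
The plan is to prove the three assertions of the statement separately and in that order: first $\mathcal{A}\preccurlyeq\mathrm{CS}(\mathcal{A},(a_i,B_i)_{i\in X})$, then the equality $\mathrm{CS}(\mathcal{A},(a_i,B_i)_{i\in X})=\langle A,(a_i)_{i\in X}\rangle_{\mathcal{B}}$, and finally $\langle A,(a_i)_{i\in X}\rangle_{\mathcal{B}}\preccurlyeq\mathcal{B}$; along the way the finite statement of Proposition \ref{closed_implies_subgeo} gets upgraded to arbitrary closed $X$. For the first assertion, note that since $X$ is closed, listing $X$ in increasing order turns $(a_i,B_i)_{i\in X}$ into a construction over $\mathcal{A}$ in the sense of Definition \ref{construction}, so $\mathrm{CS}(\mathcal{A},(a_i,B_i)_{i\in X})$ is well-defined, lies in $\mathbf{K}^n_0$, and has point set $P(\mathcal{A})\cup\{a_i\,:\,i\in X\}$. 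By Lemma \ref{princ_pres_ext} every stage of this construction is a $\wedge$-subgeometry of the next, and the ($\wedge$-)subgeometry relation is transitive and passes to the directed unions taken at limit stages (routine), so $\mathcal{A}\leq\mathrm{CS}(\mathcal{A},(a_i,B_i)_{i\in X})$; the increasing enumeration of $X$ then witnesses $\mathcal{A}\preccurlyeq\mathrm{CS}(\mathcal{A},(a_i,B_i)_{i\in X})$ by Definition \ref{def_of_strong}(i).

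For the equality, observe that $\mathrm{CS}(\mathcal{A},(a_i,B_i)_{i\in X})$ and $\langle A,(a_i)_{i\in X}\rangle_{\mathcal{B}}$ are geometries on the same point set $P(\mathcal{A})\cup\{a_i\,:\,i\in X\}$, so by the Birkhoff--Whitney correspondence (and our convention of identifying geometries via identity-on-points isomorphisms) it suffices to check that an arbitrary finite set $I_0$ of points is independent in one iff it is independent in the other. Given such $I_0$, Lemma \ref{Konig} supplies a finite closed $X_0\subseteq X$ with $I_0\subseteq P(\mathcal{A})\cup\{a_i\,:\,i\in X_0\}$ (closedness does not refer to the ambient index set, so $X_0$ is closed both as a subset of $X$ and as a subset of $\alpha$). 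Applying Proposition \ref{closed_implies_subgeo} once inside $\mathcal{B}=\mathrm{CS}(\mathcal{A},(a_i,B_i)_{i<\alpha})$ and once inside $\mathrm{CS}(\mathcal{A},(a_i,B_i)_{i\in X})$ yields
\[
\langle A,(a_i)_{i\in X_0}\rangle_{\mathcal{B}}=\mathrm{CS}(\mathcal{A},(a_i,B_i)_{i\in X_0})=\langle A,(a_i)_{i\in X_0}\rangle_{\mathrm{CS}(\mathcal{A},(a_i,B_i)_{i\in X})}.
\]
Since independence of a finite set of points is inherited between a geometry and any of its subgeometries containing those points, $I_0$ is independent in $\langle A,(a_i)_{i\in X}\rangle_{\mathcal{B}}$ iff in $\mathcal{B}$ iff (by the first equality above) in $\langle A,(a_i)_{i\in X_0}\rangle_{\mathcal{B}}$ iff (by the second) in $\langle A,(a_i)_{i\in X_0}\rangle_{\mathrm{CS}(\mathcal{A},(a_i,B_i)_{i\in X})}$ iff in $\mathrm{CS}(\mathcal{A},(a_i,B_i)_{i\in X})$; chaining these equivalences gives the equality.

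For the last assertion, reorder the construction $(a_i,B_i)_{i<\alpha}$ so that the indices in $X$ come first, in increasing order, followed by those in $\alpha\setminus X$, again in increasing order, and let $\pi:\beta\to\alpha$ be the resulting reindexing. The hypothesis of Lemma \ref{change_order} holds: at a position in the $X$-block it is exactly the closedness of $X$, and at a position in the $(\alpha\setminus X)$-block it follows from the original constraint $B_i\subseteq P(\mathcal{A})\cup\{a_j\,:\,j<i\}$ together with the fact that the whole $X$-block precedes that position. Hence $\mathcal{B}=\mathrm{CS}(\mathcal{A},(a_{\pi(i)},B_{\pi(i)})_{i<\beta})$; its initial segment indexed by the $X$-block is $\mathrm{CS}(\mathcal{A},(a_i,B_i)_{i\in X})$, which by the previous paragraph equals $\langle A,(a_i)_{i\in X}\rangle_{\mathcal{B}}$, and the tail of the reordered construction, started from $\langle A,(a_i)_{i\in X}\rangle_{\mathcal{B}}$, is a construction over $\langle A,(a_i)_{i\in X}\rangle_{\mathcal{B}}$ whose remaining stages reproduce the build-up to $\mathcal{B}$. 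By the argument of the first paragraph applied with base $\langle A,(a_i)_{i\in X}\rangle_{\mathcal{B}}$, this gives $\langle A,(a_i)_{i\in X}\rangle_{\mathcal{B}}\preccurlyeq\mathcal{B}$.

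The main obstacle is the middle step: Proposition \ref{closed_implies_subgeo} is only available for finite $X$, so for infinite $X$ the equality must be extracted by a K\"onig-style reduction to finite closed pieces, and one must be careful that the finite piece $X_0$ is closed so that Proposition \ref{closed_implies_subgeo} genuinely applies on both sides of the display. Everything else is bookkeeping with the definitions of construction, closedness, and $\preccurlyeq$, plus the routine facts about transitivity of the subgeometry relation and its behaviour under directed unions.
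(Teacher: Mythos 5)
Your proof is correct, and its backbone is the same as the paper's: the key move in both is the reordering $\pi$ that lists the indices in $X$ first (in increasing order) and then those in $\alpha - X$, checking the hypothesis of Lemma \ref{change_order} exactly as you do (closedness of $X$ on the $X$-block, the original construction constraint on the tail). Where you diverge is the middle equality $\mathrm{CS}(\mathcal{A},(a_i,B_i)_{i\in X})=\langle A,(a_{i})_{i\in X}\rangle_{\mathcal{B}}$: you prove it by a separate K\"onig-style reduction, invoking Lemma \ref{Konig} to get a finite closed $X_0$, applying Proposition \ref{closed_implies_subgeo} on both sides, and comparing finite independent sets of points under the identity-on-points identification. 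That argument is sound (closedness is indeed intrinsic to $X_0$, and finite independence is preserved under passing to point-generated subgeometries, since these are matroid restrictions), and it mirrors the technique the paper uses inside the proof of Lemma \ref{change_order} itself; but in the paper's proof of this lemma the equality comes for free from the reordering: once $\mathcal{B}=\mathrm{CS}(\mathcal{A},(a_{\pi(i)},B_{\pi(i)})_{i<\delta})$, the initial segment $\mathrm{CS}(\mathcal{A},(a_{\pi(i)},B_{\pi(i)})_{i<\beta})$ is literally $\mathrm{CS}(\mathcal{A},(a_i,B_i)_{i\in X})$, is a subgeometry of $\mathcal{B}$ whose point set is $P(\mathcal{A})\cup\{a_i \mid i\in X\}$, and hence coincides with the subgeometry of $\mathcal{B}$ generated by those points, so no second appeal to K\"onig or to Proposition \ref{closed_implies_subgeo} is needed. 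In short: same route, with an avoidable detour in the middle step; your version costs a bit more work but makes the identification of the two lattices completely explicit.
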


\begin{proof} Let $\beta$ be the order type of $(X, \in)$ and $\gamma$ the order type of $(\alpha - X, \in)$. Let $\delta = \beta + \gamma$ and $\pi: \delta \rightarrow \alpha$ such that for all $i < \beta$, $\pi(i)$ is the $i$-th member of $(X, \in )$, and for all $i < \gamma$, $\pi(\beta + i)$ is the $i$-th member of $(\alpha - X, \in)$. It is then easy to see that for every $j < \delta$, $B_{\pi(j)} \subseteq P(\mathcal{A}) \cup \left\{ a_{\pi(i)} \, | \, i < j \right\}$. Hence, by Lemma \ref{change_order}, 
$$\mathrm{CS}(\mathcal{A}, (a_i, B_i)_{i < \alpha}) = \mathrm{CS}(\mathcal{A}, (a_{\pi(i)}, B_{\pi(i)})_{i < \delta}) .$$
But
\[ \begin{array}{rcl}
 \mathrm{CS}(\mathcal{A}, (a_i, B_i)_{i \in X})
 & = & \langle A, (a_{i})_{i \in X }\rangle_{\mathrm{CS}(\mathcal{A}, (a_{\pi(i)}, B_{\pi(i)})_{i < \beta})} \\
  & = & \langle A, (a_{i})_{i \in X }\rangle_{\mathrm{CS}(\mathcal{A}, (a_{\pi(i)}, B_{\pi(i)})_{i < \delta})} \\
 & \preccurlyeq & \mathrm{CS}(\mathcal{A}, (a_{\pi(i)}, B_{\pi(i)})_{i < \beta}) \\
 & \preccurlyeq & \mathrm{CS}(\mathcal{A}, (a_{\pi(i)}, B_{\pi(i)})_{i < \delta}),
\end{array} \] 
and so $\mathcal{A} \preccurlyeq \mathrm{CS}(\mathcal{A}, (a_i, B_i)_{i \in X}) = \langle A, (a_{i})_{i \in X }\rangle_{\mathcal{B}} \preccurlyeq \mathcal{B}$.
\end{proof}

	We now define the {\em free $\preccurlyeq$-amalgam} of $\mathcal{A}$ and $\mathcal{B}$ over $\mathcal{C}$, in symbols $\mathcal{A} \oplus_{\mathcal{C}} \mathcal{B}$.
	
	\begin{definition}\label{free_amalgamation} Let $\mathcal{A}, \mathcal{B}, \mathcal{C} \in \mathbf{K}^n$, with $\mathcal{C} \preccurlyeq \mathcal{A}, \mathcal{B}$ and $A \cap B = C$, and $\mathcal{A} = \mathrm{CS}(\mathcal{C}, (a_i, A_i)_{i < \alpha})$ and $\mathcal{B} = \mathrm{CS}(\mathcal{C}, (b_i, B_i)_{i < \beta})$. Let $\delta = \alpha + \beta$ and for $i < \delta$ set
	\[c_i = \begin{cases} a_i \;\;\;\;\; \text{ if } i < \alpha \\
						  b_j \;\;\;\;\; \text{ if } i = \alpha + j \text{ for } j < \beta \end{cases}
	\;\;\;\;\;\;\;  C_i = \begin{cases} A_i \;\;\;\;\; \text{ if } i < \alpha \\
						  B_j \;\;\;\;\; \text{ if } i = \alpha + j \text{ for } j < \beta. \end{cases}\]
We then define $\mathcal{A} \oplus_{\mathcal{C}} \mathcal{B} = \mathrm{CS}(\mathcal{C}, (c_i, C_i)_{i < \delta})$.
\end{definition}

	\begin{proposition}\label{amalg_strong} Let $\mathcal{A}, \mathcal{B}, \mathcal{C}$ as above. Then $\mathcal{A} \preccurlyeq \mathcal{A} \oplus_{\mathcal{C}} \mathcal{B} \in \mathbf{K}^n$ and $\mathcal{A} \oplus_{\mathcal{C}} \mathcal{B} = \mathcal{B} \oplus_{\mathcal{C}} \mathcal{A}$. In particular, $\mathcal{A}, \mathcal{B} \preccurlyeq \mathcal{A} \oplus_{\mathcal{C}} \mathcal{B}$.
\end{proposition}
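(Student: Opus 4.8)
The plan is to work entirely at the level of constructions, using three things: that $\mathrm{CS}$ is associative, i.e. $\mathrm{CS}(\mathcal{C},\sigma^{\frown}\tau) = \mathrm{CS}(\mathrm{CS}(\mathcal{C},\sigma),\tau)$ whenever $\tau$ is a construction over $\mathrm{CS}(\mathcal{C},\sigma)$ (immediate from the transfinite recursion in Definition \ref{construction}); Lemma \ref{change_order} on reordering constructions; and Lemma \ref{closed_implies_strong_subgeo}. I will also use that $\preccurlyeq$ is transitive, which follows from associativity: if $\mathcal{D}\preccurlyeq\mathcal{C}$ is witnessed by a construction $\sigma$ and $\mathcal{E}=\mathrm{CS}(\mathcal{C},\tau)$ with $\mathcal{C}\preccurlyeq\mathcal{E}$, then $\mathcal{E}=\mathrm{CS}(\mathcal{D},\sigma^{\frown}\tau)$, one checks $\sigma^{\frown}\tau$ is a valid construction over $\mathcal{D}$ with new points exactly $P(\mathcal{E})-P(\mathcal{D})$, and $\mathcal{D}\leq\mathcal{E}$ propagates along the resulting chain of principal one-point extensions by Lemma \ref{princ_pres_ext} and coherence of the subgeometry relation.

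First I would record that $\mathcal{A}\oplus_{\mathcal{C}}\mathcal{B}=\mathrm{CS}(\mathcal{C},(c_i,C_i)_{i<\delta})$ is a legitimate member of $\mathbf{K}^n_0$: each $\bigvee C_i$ has rank $\geq 2$ since $|C_i|\geq 2$, so every step is a genuine principal one-point extension of a rank-$n$ geometric lattice, and $\mathbf{K}^n_0$ is closed under the increasing unions occurring at limit stages. Taking $X=\emptyset$ in Lemma \ref{closed_implies_strong_subgeo} gives $\mathcal{C}\preccurlyeq\mathcal{A}\oplus_{\mathcal{C}}\mathcal{B}$; choosing $\mathcal{D}\cong FG_n$ with $\mathcal{D}\preccurlyeq\mathcal{C}$ (possible since $\mathcal{C}\in\mathbf{K}^n$) and using transitivity yields $\mathcal{A}\oplus_{\mathcal{C}}\mathcal{B}\in\mathbf{K}^n$. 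Next, the initial segment $\alpha$ of $\delta$ indexing the $a$-part is \emph{closed} for the construction $(c_i,C_i)_{i<\delta}$, because for $i<\alpha$ one has $C_i=A_i\subseteq P(\mathcal{C})\cup\{a_j\,|\,j<i\}$, which mentions no $b$'s; hence Lemma \ref{closed_implies_strong_subgeo} gives $\mathcal{A}=\mathrm{CS}(\mathcal{C},(c_i,C_i)_{i<\alpha})\preccurlyeq\mathcal{A}\oplus_{\mathcal{C}}\mathcal{B}$.

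For commutativity I would invoke Lemma \ref{change_order}. By Definition \ref{free_amalgamation}, $\mathcal{B}\oplus_{\mathcal{C}}\mathcal{A}=\mathrm{CS}(\mathcal{C},(d_k,D_k)_{k<\beta+\alpha})$, where $(d_k,D_k)$ runs through the same family of pairs as $(c_i,C_i)_{i<\alpha+\beta}$, only re-indexed so that the $b$'s come first and the $a$'s second. The crucial observation is that $A_i\subseteq P(\mathcal{C})\cup\{a_{i'}\,|\,i'<i\}$ and $B_j\subseteq P(\mathcal{C})\cup\{b_{j'}\,|\,j'<j\}$ — that is, the support of an $a$-point never mentions a $b$-point and vice versa, precisely because $\mathcal{A}$ and $\mathcal{B}$ are themselves constructions over $\mathcal{C}$. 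Consequently, in the $b$-then-$a$ reordering every point's support still precedes it, so the reindexing bijection $\pi:\beta+\alpha\to\alpha+\beta$ (namely $\pi(k)=\alpha+k$ for $k<\beta$ and $\pi(\beta+j)=j$ for $j<\alpha$) satisfies the hypothesis of Lemma \ref{change_order}, whence $\mathcal{A}\oplus_{\mathcal{C}}\mathcal{B}=\mathrm{CS}(\mathcal{C},(c_i,C_i)_{i<\alpha+\beta})=\mathrm{CS}(\mathcal{C},(d_k,D_k)_{k<\beta+\alpha})=\mathcal{B}\oplus_{\mathcal{C}}\mathcal{A}$. Finally, $\mathcal{A},\mathcal{B}\preccurlyeq\mathcal{A}\oplus_{\mathcal{C}}\mathcal{B}$ follows: $\mathcal{A}\preccurlyeq\mathcal{A}\oplus_{\mathcal{C}}\mathcal{B}$ was just shown, and $\mathcal{B}\preccurlyeq\mathcal{B}\oplus_{\mathcal{C}}\mathcal{A}=\mathcal{A}\oplus_{\mathcal{C}}\mathcal{B}$ by the same argument with the roles of $\mathcal{A}$ and $\mathcal{B}$ interchanged.

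The only real content is the commutativity step, and even there the work is bookkeeping: one must set up the reindexing bijection $\pi$ correctly and verify the hypothesis of Lemma \ref{change_order}, the essential input being the disjointness of the two supports noted above. Everything else is a direct application of Lemma \ref{closed_implies_strong_subgeo}, associativity of $\mathrm{CS}$, and transitivity of $\preccurlyeq$; no genuinely new geometric fact is required, which is why the statement should go through without trouble.
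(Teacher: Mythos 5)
Your proof is correct and follows essentially the same route as the paper: the strong embedding $\mathcal{A}\preccurlyeq\mathcal{A}\oplus_{\mathcal{C}}\mathcal{B}$ and membership in $\mathbf{K}^n$ come from the definition of the construction together with transitivity of $\preccurlyeq$, and commutativity is obtained from Lemma \ref{change_order} applied to exactly the same reindexing bijection $\pi:\beta+\alpha\to\alpha+\beta$, with the key verification that the supports $A_i$ and $B_j$ never mix. The only cosmetic differences (routing $\mathbf{K}^n$-membership through $\mathcal{C}$ instead of $\mathcal{A}$, and invoking Lemma \ref{closed_implies_strong_subgeo} for facts the paper treats as immediate) do not change the substance.
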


	\begin{proof} Obviously $\mathcal{A} \preccurlyeq \mathcal{A} \oplus_{\mathcal{C}} \mathcal{B}$. The fact that $\mathcal{A} \oplus_{\mathcal{C}} \mathcal{B} \in \mathbf{K}^n$ follows from Transitivity of $\preccurlyeq$. Finally, to see that $\mathcal{A} \oplus_{\mathcal{C}} \mathcal{B} = \mathcal{B} \oplus_{\mathcal{C}} \mathcal{A}$, let $\mathcal{A} = \mathrm{CS}(\mathcal{C}, (a_i, A_i)_{i < \alpha})$, $\mathcal{B} = \mathrm{CS}(\mathcal{C}, (b_i, B_i)_{i < \beta})$ and $\pi: \beta + \alpha \rightarrow \alpha + \beta$ the obvious bijection, i.e.
	\[\pi(i) = \begin{cases} \alpha + i \;\;\;\; \text{ if } i < \beta \\
						     j \;\;\;\;\;\;\;\;\;\;\, \text{ if } i = \beta + j \text{ for } j < \alpha. \end{cases}\]
Then $\pi$ is as in Lemma \ref{change_order}, and so
\[ \mathrm{CS}(\mathcal{C}, (c_i, C_i)_{i < \alpha + \beta}) = \mathrm{CS}(\mathcal{C}, (c_{\pi(i)}, C_{\pi(i)})_{i < \beta + \alpha}). \]
\end{proof}

\begin{theorem}\label{our_class_is_almost_AEC} $(\mathbf{K}^{3}, \preccurlyeq)$ is an almost $\mathrm{AEC}$ with $\mathrm{AP}$, $\mathrm{JEP}$ and $\mathrm{ALM}$. 
\end{theorem}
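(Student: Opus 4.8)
The plan is to check the axioms one at a time, disposing of the routine ones quickly and concentrating on the Coherence Axiom and the chain axioms.

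\emph{The routine axioms.} Closure of $\mathbf{K}^3$ and $\preccurlyeq$ under isomorphism is immediate from the definitions, and $\mathcal{A}\preccurlyeq\mathcal{B}\Rightarrow\mathcal{A}\leq\mathcal{B}$ holds by the definition of $\preccurlyeq$. That $\preccurlyeq$ is a partial order: reflexivity uses the empty construction; antisymmetry holds since $\mathcal{A}\preccurlyeq\mathcal{B}\preccurlyeq\mathcal{A}$ forces $A=B$ and then $\mathcal{A}=\mathcal{B}$, as both are $\wedge$-subgeometries of one another; transitivity follows by concatenating a construction of $\mathcal{B}$ over $\mathcal{A}$ with one of $\mathcal{C}$ over $\mathcal{B}$ (the concatenation is a legitimate construction over $\mathcal{A}$ since $P(\mathcal{B})=P(\mathcal{A})\cup\{a_i\}$). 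For $\mathrm{AP}$: given $\mathcal{A}\preccurlyeq\mathcal{B}_0,\mathcal{B}_1$, rename the points of $\mathcal{B}_1$ so that $B_0\cap B_1=A$ and apply Proposition \ref{amalg_strong} to get $\mathcal{B}_0\oplus_\mathcal{A}\mathcal{B}_1\in\mathbf{K}^3$ with $\mathcal{B}_0,\mathcal{B}_1\preccurlyeq\mathcal{B}_0\oplus_\mathcal{A}\mathcal{B}_1$; the two inclusions are $\preccurlyeq$-embeddings agreeing on $A$. $\mathrm{JEP}$ follows by the same argument applied over a common copy of $FG_3$, using that every member of $\mathbf{K}^3$ contains such a copy as a $\preccurlyeq$-substructure. $\mathrm{ALM}$: for infinite $\lambda$ the geometry $\mathrm{CS}(FG_3,(a_i,\{A,B,C\})_{i<\lambda})$ obtained by adding $\lambda$ generic points to $FG_3$ lies in $\mathbf{K}^3$ and has cardinality $\lambda$. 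Finally $\mathrm{LS}(\mathbf{K}^3,\preccurlyeq)=\aleph_0$: writing $\mathcal{A}=\mathrm{CS}(\mathcal{A}_0,(a_i,A_i)_{i<\alpha})$ with $\mathcal{A}_0\cong FG_3$, given $B\subseteq A$ each $x\in B$ is a join of at most three points, so we may pick $I_0\subseteq\alpha$ with $|I_0|\leq|B|+\aleph_0$ capturing all the points used; by Lemma \ref{Konig} enlarge $I_0$ to a closed $X$ of the same cardinality (or finite), and then Lemma \ref{closed_implies_strong_subgeo} yields $\mathcal{A}_0\preccurlyeq\langle A,(a_i)_{i\in X}\rangle_\mathcal{A}\preccurlyeq\mathcal{A}$, so $\mathcal{C}:=\langle A,(a_i)_{i\in X}\rangle_\mathcal{A}\in\mathbf{K}^3$ is the required structure.

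\emph{The chain axioms} (4.1) \emph{and} (4.2). Let $(\mathcal{A}_i)_{i<\delta}$ be an increasing continuous $\preccurlyeq$-chain. All the $\mathcal{A}_i$ share the same $0$ and $1$; since $\leq$ (being the $\wedge$-subgeometry relation) makes the rank function, $\vee$ and $\wedge$ absolute and the chain is directed, $\bigcup_{i<\delta}\mathcal{A}_i$ is again a lattice that is semimodular, a point lattice and of rank $3$, so it lies in $\mathbf{K}^3_0$. For (4.2), concatenate for $i\geq j$ the constructions witnessing $\mathcal{A}_i\preccurlyeq\mathcal{A}_{i+1}$: by continuity of the chain the limit stages of the concatenated construction over $\mathcal{A}_j$ produce exactly the $\mathcal{A}_\mu$ for limit $\mu$, whence $\mathrm{CS}(\mathcal{A}_j,\text{concatenation})=\bigcup_{i<\delta}\mathcal{A}_i$ and so $\mathcal{A}_j\preccurlyeq\bigcup_{i<\delta}\mathcal{A}_i$. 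Then (4.1) follows: taking $\mathcal{A}_0'\cong FG_3$ with $\mathcal{A}_0'\preccurlyeq\mathcal{A}_0\preccurlyeq\bigcup_{i<\delta}\mathcal{A}_i$, transitivity of $\preccurlyeq$ gives $\bigcup_{i<\delta}\mathcal{A}_i\in\mathbf{K}^3$.

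\emph{The Coherence Axiom.} Let $\mathcal{A}\preccurlyeq\mathcal{C}$, $\mathcal{B}\preccurlyeq\mathcal{C}$ and $\mathcal{A}\leq\mathcal{B}$; we must produce $\mathcal{A}\preccurlyeq\mathcal{B}$. By Proposition \ref{equi_car_of_strong}(i) fix a well-ordering $(a_i)_{i<\alpha}$ of $P(\mathcal{C})-P(\mathcal{A})$ with $\langle A,(a_i)_{i<j}\rangle_\mathcal{C}\leq\mathcal{C}$ for all $j\leq\alpha$, and set $Y=\{i<\alpha\,:\,a_i\in P(\mathcal{B})\}$, so $(a_i)_{i\in Y}$ enumerates $P(\mathcal{B})-P(\mathcal{A})$. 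The crux is: for every $j\leq\alpha$, writing $S_j=\{a_i:i<j\}$ and $S'_j=S_j\cap P(\mathcal{B})$, one has $\langle A,S'_j\rangle_\mathcal{C}\leq\mathcal{C}$. Indeed, two lines $\ell_0,\ell_1$ of $\langle A,S'_j\rangle$ are each the join of two points of $P(\mathcal{A})\cup S'_j\subseteq P(\mathcal{B})$, hence lie in $\mathcal{B}$; since $\mathcal{B}\leq\mathcal{C}$ their meet $(\ell_0\wedge\ell_1)^\mathcal{C}$ lies in $\mathcal{B}$, and since $\langle A,S_j\rangle_\mathcal{C}\leq\mathcal{C}$ it also lies in $\langle A,S_j\rangle$; so if it is a point it lies in $(P(\mathcal{A})\cup S_j)\cap P(\mathcal{B})=P(\mathcal{A})\cup S'_j\subseteq\langle A,S'_j\rangle$, forcing $(\ell_0\wedge\ell_1)^{\langle A,S'_j\rangle}=(\ell_0\wedge\ell_1)^\mathcal{C}$ (here we use that in rank $3$ the meet of two lines is a point or $0$). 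Because $\mathcal{B}\leq\mathcal{C}$, the geometry $\langle A,S'_j\rangle$ is computed identically inside $\mathcal{B}$ and inside $\mathcal{C}$, and the displayed equality of meets upgrades to $\langle A,S'_j\rangle_\mathcal{B}\leq\mathcal{B}$. As the initial segments of the well-order $(a_i)_{i\in Y}$ are exactly the $\langle A,S'_j\rangle_\mathcal{B}$ with $j\in Y$, Proposition \ref{equi_car_of_strong}(i) (in the other direction) gives $\mathcal{A}\preccurlyeq\mathcal{B}$.

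\emph{Main obstacle.} The substantial point is the Coherence Axiom, and inside it the absoluteness argument above: this is where the restriction to rank $3$ and the hypothesis $\mathcal{B}\preccurlyeq\mathcal{C}$ (used through $\mathcal{B}\leq\mathcal{C}$) genuinely enter. The verification that a directed union of $\wedge$-subgeometries along a $\preccurlyeq$-chain is again a rank-$3$ geometric lattice also needs some care, but is routine once one notes that rank, $\vee$ and $\wedge$ are absolute along such chains.
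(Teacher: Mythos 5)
Your proposal is correct and follows essentially the same route as the paper: routine axioms plus transitivity by concatenation of constructions, $\mathrm{AP}$ via the free $\preccurlyeq$-amalgam (with $\mathrm{JEP}$ reduced to it through $FG_3$), the chain clauses via the limit clause of the construction, the L\"owenheim--Skolem bound via K\"onig's Lemma and Lemma \ref{closed_implies_strong_subgeo}, and Coherence by restricting a witness for $\mathcal{A} \preccurlyeq \mathcal{C}$ to the points of $\mathcal{B}$ and using $\mathcal{B} \leq \mathcal{C}$ together with the rank-$3$ fact that meets of lines are points or $0$. Your direct argument that the offending meet-point already lies in $P(\mathcal{A}) \cup S'_j$ is just the contrapositive formulation of the paper's contradiction at the stage $t \geq j$, so the two proofs coincide in substance.
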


\begin{proof} (1), (2) and (3) from Definition \ref{def_indep_first_order} are OK (for (3) just concatenate the constructions). Regarding (4), we will see that (4.3) fails in Theorem \ref{th_fail_smooth}, while (4.1) and (4.2) are taken care of by the limit clause in Definition \ref{construction}. Regarding $\mathrm{ALM}$, $\mathrm{JEP}$ and $\mathrm{AP}$, the first is obviously satisfied, the second follows from the third because $FG_3$ $\preccurlyeq$-embeds in every structure in $\mathbf{K}^{3}$, and the third is taken care of by the free $\preccurlyeq$-amalgam $\mathcal{A} \oplus_{\mathcal{C}} \mathcal{B}$. Items (5) and (6) need proofs. First (5). Let $\mathcal{A}, \mathcal{B} \preccurlyeq \mathcal{C}$ and $\mathcal{A} \leq \mathcal{B}$, we want to show that $\mathcal{A} \preccurlyeq \mathcal{B}$. Using the equivalent definition of $\preccurlyeq$ of Proposition \ref{equi_car_of_strong}, let $(a_i)_{i < \alpha}$ be a witness for $\mathcal{A} \preccurlyeq \mathcal{C}$, i.e. $(a_i)_{i < \alpha}$ is a well-ordering of $P(\mathcal{C}) - P(\mathcal{A})$ such that for every $j < \alpha$ we have that $\langle A, (a_i)_{i < j} \rangle_{\mathcal{C}}$ is a $\wedge$-subgeometry of $\mathcal{C}$.  Let $(a_i)_{i \in J \subseteq \alpha}$ be the well-order of $P(\mathcal{B}) - P(\mathcal{A})$ induced by $(a_i)_{i < \alpha}$. We show that $(a_i)_{i \in J}$ is a witness for $\mathcal{A} \preccurlyeq \mathcal{B}$. For a contradiction, suppose that there exists $j \in J$ such that there exists $x, y \in \langle A, (a_i)_{i \in J, \, i < j} \rangle_{\mathcal{B}} = \mathcal{A}_j$ such that $(x \wedge y)^{\mathcal{A}_j} \neq (x \wedge y)^{\mathcal{B}}$. It is easy to see that for this to be the case we must have that $x$ and $y$ are lines which are parallel in $\mathcal{A}_j$ but incident in $\mathcal{B}$. Let $t \in J$ be such that $(x \wedge y)^{\mathcal{B}} = a_t$. Notice that $t \geq j$ because $\mathcal{A}_j = \langle A, (a_i)_{i \in J, \, i < j} \rangle_{\mathcal{B}}$. Of course, $x, y \in \langle A, (a_i)_{i \in J, \, i < j} \rangle_{\mathcal{C}} \subseteq \langle A, (a_i)_{i < j} \rangle_{\mathcal{C}} \subseteq \langle A, (a_i)_{i < t} \rangle_{\mathcal{C}}$, and by hypothesis $(x \wedge y)^{\mathcal{B}} = (x \wedge y)^{\mathcal{C}} =a_t$, because $\mathcal{B} \preccurlyeq \mathcal{C}$ implies $\mathcal{B} \leq \mathcal{C}$. Thus, $\langle A, (a_i)_{i < t} \rangle_{\mathcal{C}}$ is not a $\wedge$-subgeometry of $\mathcal{C}$, which contradicts the fact that $\mathcal{A} \preccurlyeq \mathcal{C}$.
We now prove (6). We show that $\mathrm{LS}(\mathbf{K}^3, \preccurlyeq) = \omega$. Let $\mathcal{A} \in \mathbf{K}^3$ and $B \subseteq A$, we want to find $\mathcal{C} \in \mathbf{K}^3$ such that $B \subseteq C$, $\mathcal{C} \preccurlyeq \mathcal{A}$ and $|C| \leq |B| + \omega$. Let $\mathcal{A} = \mathrm{CS}(FG_3, (a_i, B_i)_{i < \alpha})$ and $X \subseteq \alpha$ such that $|X| \leq |B| + \omega$ and $B \subseteq \langle FG_3, (a_i)_{i \in X} \rangle_{\mathcal{A}}$ -- such an X can be easily found, just add enough points to lines in $B$ so that every line is the sup of some points. By Lemma \ref{Konig}, we can assume that $X$ is closed and $|X| \leq |B| + \omega$. By Lemma \ref{closed_implies_strong_subgeo}, we have that
	\[ \langle FG_3, (a_i)_{i \in X} \rangle_{\mathcal{A}} = \mathrm{CS}(FG_3, (a_i, B_i)_{i \in X}) \preccurlyeq \mathcal{A},\]
and so we are done. 
\end{proof}

\begin{theorem}\label{th_fail_smooth} The Smoothness Axiom fails for $(\mathbf{K}^{3}, \preccurlyeq)$, and $(\mathbf{K}^{3}, \preccurlyeq) \neq (\mathbf{K}^{3}_{*}, \preccurlyeq^*)$.
\end{theorem}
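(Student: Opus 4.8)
The plan is to construct an explicit counterexample to the Smoothness Axiom and show that the union structure, while being the limit of a $\preccurlyeq$-chain inside some ambient $\mathcal{B}$, is not itself $\preccurlyeq$ in $\mathcal{B}$ — but it \emph{is} $\preccurlyeq^*$ in $\mathcal{B}$, which simultaneously separates the two classes. Concretely, I would start from $FG_3$ on points $A,B,C$ and build a plane $\mathcal{B}$ by adding a countable backward-indexed sequence of points $(P_i)_{i \in (I,<)}$ with $I = \{\dots < -n < \dots < -1 < 0\}$, where each $P_j$ (for $j < 0$) is added freely to the line $P_{j-1} \vee P_{j-2}$ (so $B_j = \{P_{j-1},P_{j-2}\}$), exactly as in the $\mathcal{B}$ of Remark \ref{technical_req} (Figure \ref{plane_for technical_req}), while $P_0$ is added so that the line $P_{-1}\vee P_0$ is present but is \emph{not} obtainable as $\langle A, (P_i)_{i\leq j}\rangle - \langle A,(P_i)_{i<j}\rangle$ for any single $j$. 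The key feature I want is that the chain $\mathcal{A}_n = \langle A,B,C,(P_i)_{-n \leq i < 0}\rangle_{\mathcal{B}}$ is an increasing continuous $\preccurlyeq$-chain (each step is a principal one-point extension, hence $\mathcal{A}_n \preccurlyeq \mathcal{A}_{n+1}$ by construction, and each $\mathcal{A}_n \preccurlyeq \mathcal{B}$ by Lemma \ref{closed_implies_strong_subgeo} applied to the closed finite set $\{-n,\dots,-1\}$), with union $\mathcal{A}_\omega = \bigcup_n \mathcal{A}_n$, yet $\mathcal{A}_\omega \not\preccurlyeq \mathcal{B}$.

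The core of the argument is then the failure $\mathcal{A}_\omega \not\preccurlyeq \mathcal{B}$. Here I would use the characterization in Proposition \ref{equi_car_of_strong}(i): if $\mathcal{A}_\omega \preccurlyeq \mathcal{B}$ there is a well-ordering of $P(\mathcal{B}) - P(\mathcal{A}_\omega) = \{P_0\}$ — but that set is a singleton, so $\mathcal{A}_\omega \preccurlyeq \mathcal{B}$ would just say $\mathcal{A}_\omega \oplus_a P_0 = \mathcal{B}$ for some flat $a$, i.e. $\mathcal{A}_\omega$ is a $\wedge$-subgeometry of $\mathcal{B}$ and the extension is principal (Lemma \ref{princ_pres_ext}). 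The point is that $\mathcal{A}_\omega$ is \emph{not} a $\wedge$-subgeometry of $\mathcal{B}$: the line $\ell_0 = P_{-1}\vee P_0$ of $\mathcal{B}$ is not in $\mathcal{A}_\omega$, but I arrange $\mathcal{B}$ so that two lines $x,y$ lying in $\mathcal{A}_\omega$ meet in a point of $\ell_0 \setminus A_\omega$-material — more precisely I want witnesses $x,y \in A_\omega$ with $(x\wedge y)^{\mathcal{A}_\omega} = 0$ (parallel) but $(x \wedge y)^{\mathcal{B}} \neq 0$. One clean way: add $P_0$ as the intersection point of $\ell_0$ with a suitable line through two of the $P_i$, forcing a coincidence that only materializes once $P_0$ is present. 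The arithmetic of exactly which lines to use is the routine part I would not grind through, but the existence of such a configuration follows the same pattern as Remark \ref{technical_req}.

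For the separation $(\mathbf{K}^3,\preccurlyeq) \neq (\mathbf{K}^3_*,\preccurlyeq^*)$, I would show $\mathcal{A}_\omega \preccurlyeq^* \mathcal{B}$: enumerate $P(\mathcal{B}) - P(\mathcal{A}_\omega) = \{P_0\}$ trivially, and verify conditions a) and b) of Definition \ref{def_of_strong}(ii) — or equivalently a)' and b)' of Proposition \ref{equi_car_of_strong}(ii). Condition b)' asks that every downward-closed $J$ gives $\langle A_\omega, (P_i)_{i\in J}\rangle_{\mathcal{B}} \leq \mathcal{B}$, which here only involves $J = \emptyset$ and $J = \{0\}$, i.e. $\mathcal{A}_\omega \leq \mathcal{B}$ and $\mathcal{B} \leq \mathcal{B}$; so this fails too unless I rephrase — actually the right move is to exhibit $FG_3 \preccurlyeq^* \mathcal{B}$ directly via the full enumeration $(P_i)_{i\in I}$ but $FG_3 \not\preccurlyeq \mathcal{B}$, using that the witness $x = P_{-1}\vee P_0$ violates condition a) / a)' for $\preccurlyeq^*$-with-$I$ yet the linear (non-well-founded) order still works for $\preccurlyeq^*$ while no \emph{well}-ordering works for $\preccurlyeq$. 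So more precisely: $\mathcal{B} \in \mathbf{K}^3_*$ (witnessed by the linear order $I$ over $FG_3$, after checking a) and b) for this order, which is exactly the content built into the construction), but $\mathcal{B} \notin \mathbf{K}^3$ because any well-ordering of $P(\mathcal{B}) - P(FG_3)$ would have to place $P_0$ with finitely many $P_i$ below it, making the relevant initial segment fail to be a $\wedge$-subgeometry. The main obstacle is getting a single concrete planar configuration that simultaneously (i) makes each finite initial segment principal/strong, (ii) has a line $P_{-1}\vee P_0$ not captured by any single stage, and (iii) makes the $\omega$-union fail $\wedge$-closure in $\mathcal{B}$; I expect this to reduce, after some bookkeeping, to the planes already drawn in Figure \ref{plane_for technical_req}, so the burden is verification rather than discovery.
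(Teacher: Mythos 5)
Your central mechanism cannot work. If $(\mathcal{A}_n)_{n<\omega}$ is an increasing chain with every $\mathcal{A}_n \preccurlyeq \mathcal{B}$, then in particular every $\mathcal{A}_n \leq \mathcal{B}$, and the union is automatically a $\wedge$-subgeometry of $\mathcal{B}$: any two elements $x,y \in A_\omega$ already lie in some $A_n$, so $(x\wedge y)^{\mathcal{A}_\omega} = (x\wedge y)^{\mathcal{A}_n} = (x\wedge y)^{\mathcal{B}} \in A_\omega$. So there are no witnesses $x,y \in A_\omega$ that are parallel in $\mathcal{A}_\omega$ but incident in $\mathcal{B}$; the "coincidence that only materializes once $P_0$ is present" you want to arrange is contradictory. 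Worse, once $\mathcal{A}_\omega \leq \mathcal{B}$ is forced and $P(\mathcal{B})-P(\mathcal{A}_\omega)=\{P_0\}$ is a singleton, $P_0$ can lie on at most one line of $A_\omega$ (two such lines would meet in $P_0 \notin A_\omega$, contradicting $\wedge$-closure), so the modular cut of $P_0$ over $\mathcal{A}_\omega$ is principal and $\mathcal{A}_\omega \preccurlyeq \mathcal{B}$ after all. In other words, no counterexample to smoothness can have only finitely many points of $\mathcal{B}$ outside the union: the failure must come from the \emph{well-foundedness} requirement on the witness ordering when infinitely many points are missing. This is exactly what the paper's proof does: it builds a gadget (the points $D_i, E_i, H_i$ with the three non-trivial lines per level in Figure \ref{fail_smoth}) engineered so that every witness for $\mathcal{A}_\omega \preccurlyeq \mathcal{B}_\omega$ would have to extend the infinite descending chain $D_0 > D_1 > D_2 > \cdots$, which no well-order can do, while the backward linear order still witnesses $\mathcal{A}_\omega \preccurlyeq^* \mathcal{B}_\omega$ (and $\mathcal{A}_\omega \leq \mathcal{B}_\omega$ holds, as it must). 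Your side issues compound this: the backward-indexed family $(P_i,B_i)_{i\in(I,<)}$ is not a construction in the sense of Definition \ref{construction} (those are ordinal-indexed), and the sets $\{-n,\dots,-1\}$ are not closed (each $B_{-n}$ refers to $P_{-n-1},P_{-n-2}$), so Lemma \ref{closed_implies_strong_subgeo} does not apply as you invoke it.

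The separation argument also fails as stated. The planes of Figure \ref{plane_for technical_req} have all the $P_i$ collinear, so one \emph{can} well-order $P(\mathcal{B})-P(FG_3)$: add $P_0$ (generically, or under $A\vee P_0$'s line data as appropriate), then $P_{-1}$, and hang every further $P_{-n}$ on the line $P_0 \vee P_{-1}$; hence $FG_3 \preccurlyeq \mathcal{B}$ and these planes lie in $\mathbf{K}^3$, contrary to your claim that any well-ordering forces a failure of $\wedge$-closure. Note also that the paper does \emph{not} prove $\mathbf{K}^3 \neq \mathbf{K}^3_*$ (it states this is open); the theorem's inequality $(\mathbf{K}^{3},\preccurlyeq) \neq (\mathbf{K}^{3}_{*},\preccurlyeq^*)$ is witnessed by a \emph{pair} of structures, namely $\mathcal{A}_\omega \preccurlyeq^* \mathcal{B}_\omega$ but $\mathcal{A}_\omega \not\preccurlyeq \mathcal{B}_\omega$, obtained from the same smoothness counterexample. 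So the burden here is genuinely one of discovery, not verification: you need a configuration in which the geometry itself forces every construction order to be non-well-founded, and the collinear configuration of Remark \ref{technical_req} is far too free for that.
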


\begin{proof} Let $\mathcal{B}_2$ be the plane whose points and non-trivial lines, i.e. lines with at least three points, are represented in Figure \ref{fail_smoth}. Then, following the well-order as in the figure, i.e. $D_0 < E_0 < H_0 < D_1 < C_0 < B_0 < E_1 < \cdots < B_2$, it is easy to see that $\mathcal{B}_2 \in \mathbf{K}^{3}$. Let now $\mathcal{A}_2 = \langle A, B, C, (C_i, B_i)_{i \leq 2} \rangle_{\mathcal{B}_2}$. Then, using e.g. the induced ordering, i.e. $C_0 < B_0 < C_1 < B_1 < C_2 < B_2$, one sees that also $\mathcal{A}_2 \in \mathbf{K}^{3}$. Furthermore, the induced reverse ordering, i.e. $D_3 < H_2 < E_2 < D_2 < H_1 < \cdots < D_0$, is a witness for $\mathcal{A}_2 \preccurlyeq \mathcal{B}_2$. Finally, no linear extension of $D_0 < D_1$ or $D_1 < D_2$ or $D_2 < D_3$ or $D_0 < D_2$ or $D_0 < D_3$ or $D_1 < D_3$ is a witness for $\mathcal{A}_2 \preccurlyeq \mathcal{B}_2$. I.e. any witness for $\mathcal{A}_2 \preccurlyeq \mathcal{B}_2$ has to be a linear extension of $D_3 < D_2 < D_1 < D_0$, as in the exhibited witness.

\smallskip
\noindent
Using the same construction, i.e. keep going as in $B_2 < E_3 < H_3 < D_4 < C_3 < B_3 < \cdots$, we can build a $\preccurlyeq$-chain $(\mathcal{B}_i)_{i < \omega}$ and then consider for every $i < \omega$ the structure $\mathcal{A}_i = \langle A, B, C, (C_j, B_j)_{j \leq i} \rangle_{\mathcal{B}_i}$. Let now
$$ \mathcal{A}_{\omega} = \bigcup_{i < \omega} \mathcal{A}_i \;\; \text{ and } \;\; \mathcal{B}_{\omega} = \bigcup_{i < \omega} \mathcal{B}_i.$$
Then $(\mathcal{A}_i)_{i < \omega}$ is a $\preccurlyeq$-chain (easy to verify) and $\mathcal{A}_i \preccurlyeq \mathcal{B}_{\omega}$ for each $i < \omega$, because $\mathcal{A}_i \preccurlyeq \mathcal{B}_i$ via the induced reverse ordering (as already noticed for $i = 2$) and of course $\mathcal{B}_i \preccurlyeq \mathcal{B}_{\omega}$. On the other hand $\mathcal{A}_{\omega} \not\preccurlyeq \mathcal{B}_{\omega}$, because our constructions are well-founded and to construct $\mathcal{B}_{\omega}$ from $\mathcal{A}_{\omega}$ we would be in need of a linear extension of the linear order $D_0 > D_1 > D_2 > D_3 > D_4 > \cdots$. As a matter of facts, the linear order $\cdots < D_n < H_{n-1} < E_{n-1} < D_{n-1} < \cdots < D_3 < H_2 < E_2 < D_2 < \cdots < D_0$ is a witness for $\mathcal{A}_{\omega} \preccurlyeq^* \mathcal{B}_{\omega}$. This shows that $(\mathbf{K}^{3}, \preccurlyeq) \neq (\mathbf{K}^{3}_{*}, \preccurlyeq^*)$.
\begin{figure}[ht]
	\begin{center}
	\begin{tikzpicture}
\matrix (a) [matrix of math nodes, column sep=0.4cm, row sep=0.3cm]{
  &   &   &   D_0E_0D_1 & E_0H_0C_0 & H_0D_1B_0 & \\
A & B & C & D_0 & E_0 	    & H_0       & D_1 & C_0 & B_0 \\
  &   &   &   D_1E_1D_2 & E_1H_1C_1 & H_1D_2B_1 & \\
  &  D_1 &  C_0 & B_0 & E_1 	    & H_1       & D_2 & C_1 & B_1 \\
  &   &   &   D_2E_2D_3 & E_2H_2C_2 & H_2D_3B_2 & \\
  &  D_2  & C_1 & B_1 & E_2 	    & H_2       & D_3 & C_2 & B_2 \\};
\end{tikzpicture}
\end{center} \caption{Failure of Smoothness Axiom.} \label{fail_smoth} \end{figure}
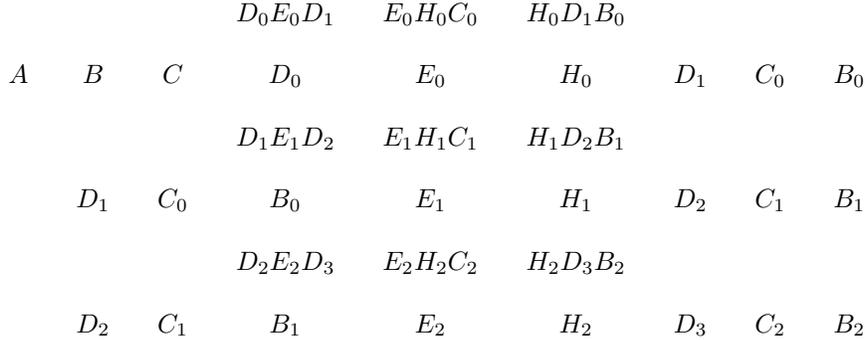
\end{proof}

\begin{remark} Notice that in the above construction if we let for $i < \omega$ 
$$\mathcal{C}_i = \langle A_{\omega}, B_{\omega} - B_i \rangle_{\mathcal{B}_{\omega}},$$
then for every $i, j < \omega$ we have that $\mathcal{A}_{j} \preccurlyeq \mathcal{C}_i \preccurlyeq \mathcal{B}_{\omega}$, and $\bigcap_{i < \omega} \mathcal{C}_i = \mathcal{A}_{\omega}$. 
\end{remark}

\begin{remark} It is possible to extend the structures above to structures $(\mathcal{B}^{*}_i)_{i < \omega}$, $(\mathcal{A}^{*}_i)_{i < \omega}$, $\mathcal{B}^{*}_{\omega}$ and $\mathcal{A}^{*}_{\omega}$, so that in addition they are {\em full}, i.e. saturated with respect to Galois types -- just extend any of the structures with a limit construction that adds cofinally many generic point and cofinally many points in each (newly created) line. It is then straightforward to verify that also in this case we have that $(\mathcal{A}^{*}_i)_{i < \omega}$ is a $\preccurlyeq$-chain, $\mathcal{A}^{*}_i \preccurlyeq \mathcal{B}^{*}_{\omega}$ for each $i < \omega$, but $\mathcal{A}^{*}_{\omega} \not\preccurlyeq \mathcal{B}^{*}_{\omega}$. This shows that the Smoothness Axiom fails even if we strengthen $\preccurlyeq$ with (fragments of) the logic $L_{\infty, \omega}$, and so it is somehow an inherent property of $(\mathbf{K}^3, \preccurlyeq)$.
\end{remark}

	\begin{example} The following counterexample shows that the strategy used in Theorem \ref{our_class_is_almost_AEC} to prove the coherence of $\preccurlyeq$ does not work in rank $n \geq 4$. Let $\mathcal{C}$ be the structure obtained from $FG_4$ on atoms $A, B, C, D$ by adding $E$ and $F$ generic, $G$ and $H$ under the hyperplane $AEF$, $I$ and $L$ under the hyperplane $BEF$, and $M$ and $N$ under the line $EF$. Then, of course, $\mathcal{C} \in \mathbf{K}^4$. In Figure \ref{fail_strat} we see a partial representation of $\mathcal{C}$, only the points and relevant non-trivial lines and hyperplanes of $\mathcal{C}$ have been represented. Let now 
	\[ \mathcal{A} = \langle A, B, C, D \rangle_{\mathcal{C}} = FG_4 \;\; \text{ and }  \;\; \mathcal{B} = \langle A, B, C, D, G, H, I, L, M, N \rangle_{\mathcal{C}}. \]
Then, of course, $\mathcal{A} \preccurlyeq \mathcal{C}$ and $\mathcal{A} \leq \mathcal{B}$. Furthermore, we can construct $\mathcal{C}$ from $\mathcal{B}$, by adding $E$ and $F$ under the line $MN$. But $\mathcal{B}$ can not be built from $\mathcal{A}$ following the order induced by the order used to build $\mathcal{C}$, i.e. $G < H< I < L < M < N$. As in fact, at the last stage, $N$ should be added to both hyperplanes $AEFGHMN$ and $BEFILMN$ because they are already existing in $\langle A, B, C, D, G, H, I, L, M \rangle_{\mathcal{C}}$.
	
\begin{figure}[ht]
\begin{center}
\begin{tikzpicture}
\matrix (a) [matrix of math nodes, column sep=0.16cm, row sep=0.3cm]{
  &   &   & AEFGHMN & & & BEFILMN \\
  &   &   &   &   & EFMN  \\
A & B & C & D & E & F & G & H & I & L & M & N \\};
\end{tikzpicture}
\end{center} \caption{Failure of strategy for coherence.} \label{fail_strat} \end{figure}
\end{example}

	\begin{oproblem} Does $(\mathbf{K}^{n}, \preccurlyeq)$ satisfies the Coherence Axiom for $n \geq 4$?		
\end{oproblem}


	
	We now define a notion of independence in $(\mathbf{K}^{3}, \preccurlyeq)$. 
	
	\begin{definition} Let $\mathcal{A} \preccurlyeq \mathcal{B} \preccurlyeq \mathfrak{M}$ and $a \in \mathfrak{M}^{< \omega}$.
Then, we define  $a \pureindep[\mathcal{A}] \mathcal{B}$ if there exists $\mathcal{A} \preccurlyeq \mathcal{C}_a$ such that $a \in C^{< \omega}_a$ and $\langle C_a, B \rangle_{\mathfrak{M}} = \mathcal{C}_a \oplus_{\mathcal{A}} \mathcal{B} \preccurlyeq \mathfrak{M}$. 
\end{definition}
Notice that from the above definition it follows that $\mathcal{C}_a \preccurlyeq \mathfrak{M}$, furthermore $\mathcal{C}_a$ can always be taken so that $C_a - A$ is finite.

	\begin{example} We now give two concrete examples of lack of independence. Let $\mathcal{H} \preccurlyeq \mathfrak{M}$ be the plane in Figure \ref{lack_indep}, and $\mathcal{A}_0 = \langle A, B, C, E \rangle_{\mathcal{H}}$ and $\mathcal{B}_0 = \langle A, B, C, E, F \rangle_{\mathcal{H}}$. Then $D \not\!\pureindep[\mathcal{A}_0] \mathcal{B}_0$, because any $\mathcal{A}_0 \preccurlyeq \mathcal{C}_D \preccurlyeq \mathfrak{M}$ with $D \in C_D$ and $C_D \cap B_0 = A_0$ is so that $\langle C_D, B_0 \rangle_{\mathfrak{M}} \neq \mathcal{C}_D \oplus_{\mathcal{A}_0} \mathcal{B}_0$, as in fact in the structure $\langle C_D, B_0 \rangle_{\mathfrak{M}}$ the set $\left\{ D, E, F \right\}$ is dependent, while in $\mathcal{C}_D \oplus_{\mathcal{A}_0} \mathcal{B}_0$ it is independent. Let now $\mathcal{A}_1 = \langle A, B, C \rangle_{\mathcal{H}}$ and $\mathcal{B}_1 = \langle A, B, C, F \rangle_{\mathcal{H}}$. Then $D \not\!\pureindep[\mathcal{A}_1] \mathcal{B}_1$, because any $\mathcal{A}_1 \preccurlyeq \mathcal{C}_D \preccurlyeq \mathfrak{M}$ with $D \in C_D$ and $C_D \cap B_1 = A_1$ is so that $\langle C_D, B_1 \rangle_{\mathfrak{M}} \not\preccurlyeq \mathfrak{M}$ as in fact $(EBC  \wedge DEF)^{\langle C_D, B_1 \rangle_{\mathfrak{M}}} = 0$, while $(EBC  \wedge DEF)^{\mathfrak{M}} = E$.
\begin{figure}[ht]
	\begin{center}
	\begin{tikzpicture}
\matrix (a) [matrix of math nodes, column sep=0.2cm, row sep=0.3cm]{
AB & AC  & BCE  & AD & BD & CD & AE & DEF & AF & BF & CF \\
A  & B   & C    &    & D   &    & E  &     & F \\};
\end{tikzpicture}
\end{center} \caption{Examples of lack of independence.} \label{lack_indep} \end{figure}
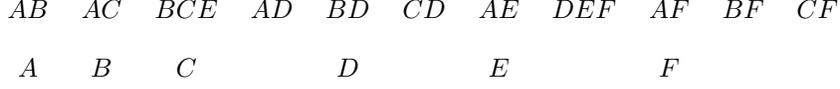
\end{example}
	
	\begin{theorem}\label{good_frame_th} $(\mathbf{K}^{3},  \preccurlyeq, \pureindep)$ is an independence calculus.
\end{theorem}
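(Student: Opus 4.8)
The plan is to check, one at a time, the conditions of Definition \ref{frame}. Condition (1)(a) and (2)(a)--(c) are exactly Theorem \ref{our_class_is_almost_AEC}, and (3)(a) is immediate, since a witness $\mathcal{C}_a$ together with the equation $\langle C_a, B \rangle = \mathcal{C}_a \oplus_{\mathcal{A}} \mathcal{B}$ is carried by any $f \in \mathrm{Aut}(\mathfrak{M})$ to a witness for $f(a) \pureindep[f(\mathcal{A})] f(\mathcal{B})$ (the free amalgam respects isomorphisms, by Definition \ref{free_amalgamation}). For the remaining axioms I would set up one recurring device: whenever $\mathcal{C} \preccurlyeq \mathcal{C}_0, \mathcal{C}_1 \preccurlyeq \mathfrak{M}$, realize $\mathcal{C}_0 \oplus_{\mathcal{C}} \mathcal{C}_1$ as a single construction over $\mathcal{C}$, note that splicing together closed pieces of a concatenated construction again yields a closed set, and apply Lemma \ref{closed_implies_strong_subgeo} to conclude that the subgeometry generated by such a piece is $\preccurlyeq$ the whole amalgam --- hence $\preccurlyeq \mathfrak{M}$ by transitivity of $\preccurlyeq$ --- while Lemma \ref{change_order} identifies it with the expected free amalgam; here one uses that $\preccurlyeq$-substructures are $\wedge$-subgeometries, so generated subgeometries are absolute in $\mathfrak{M}$.

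With this device the ``soft'' axioms go through quickly. For Symmetry (3)(b): given $b \pureindep[\mathcal{A}_0] \mathcal{A}_1$ witnessed by $\mathcal{C}_b$ (taken with $C_b - A_0$ finite), I would put $\mathcal{A}_2 := \mathcal{C}_b$, extract from a construction of $\mathcal{A}_1$ over $\mathcal{A}_0$ a finite closed block generating a strong $\mathcal{C}_a \preccurlyeq \mathcal{A}_1$ with $a \in C_a$, and --- using $\mathcal{C}_b \oplus_{\mathcal{A}_0} \mathcal{A}_1 = \mathcal{A}_1 \oplus_{\mathcal{A}_0} \mathcal{C}_b \preccurlyeq \mathfrak{M}$ from Proposition \ref{amalg_strong} --- run the device to get $\langle C_a, C_b \rangle = \mathcal{C}_a \oplus_{\mathcal{A}_0} \mathcal{C}_b \preccurlyeq \mathfrak{M}$, i.e. $a \pureindep[\mathcal{A}_0] \mathcal{C}_b$. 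For Monotonicity (3)(c): from a witness $\mathcal{C}_a$ for $a \pureindep[\mathcal{A}] \mathcal{B}$, set $\mathcal{C}'_a := \mathcal{C}_a \oplus_{\mathcal{A}} \mathcal{A}'$ (which is well defined, as $C_a \cap A' = A$ by a point-counting argument, and strong over $\mathcal{A}'$ by Proposition \ref{amalg_strong}); regrouping the defining constructions gives $\langle C'_a, B' \rangle = \langle C_a, B' \rangle = \mathcal{C}_a \oplus_{\mathcal{A}} \mathcal{B}' = \mathcal{C}'_a \oplus_{\mathcal{A}'} \mathcal{B}'$, and the device, applied inside $\langle C_a, B \rangle \preccurlyeq \mathfrak{M}$ with the blocks for $\mathcal{C}_a$ and for $\mathcal{B}' \preccurlyeq \mathcal{B}$, shows it is $\preccurlyeq \mathfrak{M}$. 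For Existence (3)(e): choose $\mathcal{C}_a \ni a$ with $\mathcal{A} \preccurlyeq \mathcal{C}_a \preccurlyeq \mathfrak{M}$ and $C_a - A$ finite; then $\mathcal{C}_a \oplus_{\mathcal{A}} \mathcal{B} \in \mathbf{K}^3$ and $\mathcal{B} \preccurlyeq \mathcal{C}_a \oplus_{\mathcal{A}} \mathcal{B}$, so model-homogeneity of $\mathfrak{M}$ gives a $\preccurlyeq$-embedding $f$ of it into $\mathfrak{M}$ fixing $\mathcal{B}$, and $b := f(a)$ satisfies $\mathrm{tp}(b/\mathcal{A}) = \mathrm{tp}(a/\mathcal{A})$ and $b \pureindep[\mathcal{A}] \mathcal{B}$. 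For Local Character (3)(d): take a $\preccurlyeq$-substructure of $\mathfrak{M}$ of the form $\mathrm{CS}(\mathcal{A}_\delta, (n_i, N_i)_{i \in X})$ with $X$ finite, $\mathcal{A}_\delta \preccurlyeq$ it and (the coordinates of) $a$ in it (possible as in the last paragraph of the proof of Theorem \ref{our_class_is_almost_AEC}); the finitely many points of $\mathcal{A}_\delta$ occurring in the $N_i$ or below $a$ lie in some $\mathcal{A}_\alpha$, $\alpha < \delta$, by continuity of the chain; reading $(n_i, N_i)_{i \in X}$ as a construction over $\mathcal{A}_\alpha$ and splicing it after a construction of $\mathcal{A}_\delta$ over $\mathcal{A}_\alpha$, the device yields a witness $\mathcal{C}_a = \mathrm{CS}(\mathcal{A}_\alpha, (n_i, N_i)_{i \in X})$ for $a \pureindep[\mathcal{A}_\alpha] \mathcal{A}_\delta$.

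The hard part will be Uniqueness (3)(f) --- the stationarity of free amalgamation --- and everything above was arranged to reduce to it. The idea: given $a \pureindep[\mathcal{A}] \mathcal{B}$ via $\mathcal{C}_a$, rewrite the free amalgam $\langle C_a, B \rangle = \mathcal{C}_a \oplus_{\mathcal{A}} \mathcal{B}$ as a construction $\mathrm{CS}(\mathcal{B}, (a_i, A_i)_{i < n})$ over $\mathcal{B}$ (legitimate by Lemma \ref{change_order}, since each $A_i \subseteq P(\mathcal{A}) \cup \{a_j : j < i\}$); the crucial observation is that every flat $\bigvee A_i$ under which a new point is added already lies in $\mathcal{A}$, never in $\mathcal{B} \setminus \mathcal{A}$. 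Closing the indices of the coordinates of $a$ and applying Lemma \ref{closed_implies_strong_subgeo} then isolates, inside $\langle C_a, B \rangle \preccurlyeq \mathfrak{M}$, a $\preccurlyeq$-substructure containing $\mathcal{B}$ and $a$ that is built from $\mathcal{B}$ by a construction whose modular-cut data depends only on $\mathrm{tp}(a/\mathcal{A})$, not on $\mathcal{B}$; by Crapo's uniqueness theorem (Theorem \ref{one_point_2}) this pins down $\langle B, a \rangle_{\mathfrak{M}}$ with $a$ marked, up to isomorphism over $\mathcal{B}$, in terms of $\mathrm{tp}(a/\mathcal{A})$. Hence if $b$ has the same type over $\mathcal{A}$ and also $b \pureindep[\mathcal{A}] \mathcal{B}$, model-homogeneity of $\mathfrak{M}$ lifts the resulting isomorphism $\langle B, a \rangle \cong \langle B, b \rangle$ (identity on $\mathcal{B}$, $a \mapsto b$) to an automorphism of $\mathfrak{M}$, giving $\mathrm{tp}(a/\mathcal{B}) = \mathrm{tp}(b/\mathcal{B})$. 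I expect the genuine work here to be the bookkeeping: isolating the ``relevant'' part of the construction, checking it is truly independent of $\mathcal{B}$ and of the chosen witness, and handling $a$ a tuple rather than a single point. Finally Continuity (3)(g) is formal: by (3)(d) pick $\alpha < \delta$ with $a \pureindep[\mathcal{A}_\alpha] \mathcal{A}_\delta$; from $\mathrm{tp}(a/\mathcal{A}_\alpha) = \mathrm{tp}(a_\alpha/\mathcal{A}_\alpha)$ and $a_\alpha \pureindep[\mathcal{A}_0] \mathcal{A}_\alpha$, condition (3)(a) gives $a \pureindep[\mathcal{A}_0] \mathcal{A}_\alpha$; and Transitivity (Proposition \ref{transitivity}, available once (1), (2) and (3)(a)--(f) hold) gives $a \pureindep[\mathcal{A}_0] \mathcal{A}_\delta$.
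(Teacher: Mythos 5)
Most of your outline runs parallel to the paper's actual proof: invariance, Symmetry via commutativity of the free amalgam (Proposition \ref{amalg_strong}), Monotonicity by reordering constructions (Lemma \ref{change_order}), Local Character by extracting a finite closed block (the paper proves the stronger ``finite base'' form, which it needs later for Corollary \ref{stability}, but your chain version suffices here), and Continuity exactly as in the paper. Two small repairs are needed in Existence: the amalgam $\mathcal{C}_a \oplus_{\mathcal{A}} \mathcal{B}$ is only defined when $C_a \cap B = A$, so you must first replace $\mathcal{C}_a$ by an isomorphic copy over $\mathcal{A}$ disjoint from $B$ outside $A$; and to conclude $\mathrm{tp}(b/\mathcal{A}) = \mathrm{tp}(a/\mathcal{A})$ for Galois types you must, as the paper does, use model homogeneity to extend the isomorphism between $\mathcal{C}_a$ and its image (both $\preccurlyeq \mathfrak{M}$, fixing $A$) to an automorphism of $\mathfrak{M}$; the embedding over $\mathcal{B}$ alone does not witness equality of types over $\mathcal{A}$.

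The genuine gap is in Uniqueness (3)(f). Your key claim --- that the modular-cut data of the block of the construction over $\mathcal{B}$ containing $a$ ``depends only on $\mathrm{tp}(a/\mathcal{A})$'' --- is not true as stated: it depends on the chosen witness $\mathcal{C}_a$, and witnesses for tuples of the same type over $\mathcal{A}$ need not carry matching data. For instance, let $a$ be a single point added generically over $\mathcal{A}$, witnessed by $\mathcal{C}_a = \langle A, a\rangle_{\mathfrak{M}}$, while $b$ sits in a witness $\mathcal{C}_b$ obtained by adding two generic points $p, q$ and then $b$ freely on the line $p \vee q$; then $\mathrm{tp}(a/\mathcal{A}) = \mathrm{tp}(b/\mathcal{A})$, but the closed block containing $b$ must include $p, q$ and its cut data (two generic points, then a point on their join) differs from the single generic step for $a$. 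Crapo's Theorem \ref{one_point_2} only gives uniqueness of the extension for a \emph{fixed} modular cut of a \emph{fixed} lattice; it does not identify the outputs of two different constructions over $\mathcal{B}$, so it cannot bridge this. What your argument actually needs is witness-irrelevance: that any two witnesses for $\mathcal{A}$-conjugate tuples, each free from $\mathcal{B}$ over $\mathcal{A}$, yield $\mathcal{B}$-conjugate tuples. That is the real content of (f), not bookkeeping, and it is exactly what the paper's proof supplies: it places $\mathcal{C}_a$ and $f(\mathcal{C}_b)$ (for $f \in \mathrm{Aut}(\mathfrak{M}/\mathcal{A})$ with $f(b)=a$) inside a common $\mathcal{D} \preccurlyeq \mathfrak{M}$, uses the Existence argument to find $g \in \mathrm{Aut}(\mathfrak{M}/\mathcal{A})$ with $g(\mathcal{D})$ free from $\mathcal{B}$ over $\mathcal{A}$, and then composes two automorphisms over $\mathcal{B}$ obtained by extending isomorphisms of free amalgams via model homogeneity, tracking $a \mapsto g(a) = g(f(b)) \mapsto b$. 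Some argument of this kind (or an explicit lemma that $\pureindep$ does not depend on the witness) must be added; as written, your reduction to Crapo's theorem assumes at the key point what is to be proved.
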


\begin{proof}  (1) and (2) have already been shown. We now come to (3). (a) is OK.
We prove (b). Let $\mathcal{A}_0 \preccurlyeq \mathcal{A}_1 \preccurlyeq \mathfrak{M}$, $a \in \mathcal{A}_1^{< \omega}$ and assume that $b \pureindep[\mathcal{A}_0] \mathcal{A}_1$, i.e. there exists $\mathcal{A}_0 \preccurlyeq \mathcal{C}_{b}$ such that $b \in C^{< \omega}_{b}$ and $\langle C_{b}, A_1 \rangle_{\mathfrak{M}} = \mathcal{C}_{b} \oplus_{\mathcal{A}_0} \mathcal{A}_1  \preccurlyeq \mathfrak{M}$. We want to show that $\langle A_1, C_{b} \rangle_{\mathfrak{M}} = \mathcal{A}_1 \oplus_{\mathcal{A}_0} \mathcal{C}_{b} \preccurlyeq \mathfrak{M}$, this of course establishes $a \pureindep[\mathcal{A}_0] \mathcal{C}_{b}$. But this is clear since by Proposition \ref{amalg_strong} we have that $\mathcal{C}_{b} \oplus_{\mathcal{A}_0} \mathcal{A}_1 = \mathcal{A}_1 \oplus_{\mathcal{A}_0} \mathcal{C}_{b}$.

\smallskip
\noindent
We prove (c). Let $\mathcal{A} \preccurlyeq \mathcal{A}' \preccurlyeq \mathcal{B}' \preccurlyeq \mathcal{B} \preccurlyeq \mathfrak{M}$, and suppose that $a \pureindep[\mathcal{A}] \mathcal{B}$, we want to show that $a \pureindep[\mathcal{A}'] \mathcal{B}'$. Let 
	\[ \mathrm{CS}(\mathcal{A}, (c_i, C_i)_{i < \alpha + \beta + \gamma + n}) \]
witness the chain of $\preccurlyeq$-embeddings
	\[ \mathcal{A} \preccurlyeq \mathcal{A}' \preccurlyeq \mathcal{B}' \preccurlyeq \mathcal{B} \oplus_{\mathcal{A}} \mathcal{C}_a = \mathcal{C}_a \oplus_{\mathcal{A}} \mathcal{B} = \langle C_a, B \rangle_{\mathfrak{M}}, \]
with $(c_i, C_i)_{\alpha + \beta + \gamma \leq i < \alpha + \beta + \gamma + n}$ witnessing that $\mathcal{A} \preccurlyeq \mathcal{C}_a$, as in Definition \ref{free_amalgamation}. We want to show that $\langle C_a,  B' \rangle_{\mathfrak{M}} = \mathcal{C}_a \oplus_{\mathcal{A}'} \mathcal{B}'$, this of course establishes $a \pureindep[\mathcal{A}'] \mathcal{B}'$ since $\mathcal{C}_a \oplus_{\mathcal{A}'} \mathcal{B}' \preccurlyeq \mathcal{C}_a \oplus_{\mathcal{A}} \mathcal{B} \preccurlyeq \mathfrak{M}$. To this extent, let 
	\[ \pi: \alpha + \beta + n + \gamma \rightarrow \alpha + \beta + \gamma + n\]
be the obvious bijection. Then $\pi$ is as in Lemma \ref{change_order}, and so 	
	\[ \mathrm{CS}(\mathcal{A}, (c_i, C_i)_{i < \alpha + \beta + \gamma + n}) = \mathrm{CS}(\mathcal{A}, (c_{\pi(i)}, C_{\pi(i)})_{i < \alpha + \beta + n + \gamma}). \]
Thus,
	\[\langle C_a, B' \rangle_{\mathfrak{M}} = \mathrm{CS}(\mathcal{A}, (c_{\pi(i)}, C_{\pi(i)})_{i < \alpha + \beta + n}) = \mathrm{CS}(\mathcal{A}', (c_{\pi(i)}, C_{\pi(i)})_{\alpha \leq i < \alpha + \beta + n}).\]
We prove (d). We prove more, i.e. for every $a \in \mathfrak{M}^{< \omega}$ and $\mathcal{B} \preccurlyeq \mathfrak{M}$ there exists finite $\mathcal{A} \preccurlyeq \mathcal{B}$ such that $a \pureindep[\mathcal{A}] \mathcal{B}$. Let $\mathcal{B} \preccurlyeq \mathcal{D} \preccurlyeq \mathfrak{M}$ be such that $a \in D^{< \omega}$, and let
	\[ \mathrm{CS}(FG_3, (a_i, B_i)_{i < \alpha + \beta}) \]
witness that $FG_3 \preccurlyeq \mathcal{B} \preccurlyeq \mathcal{D}$. Then we can find finite closed $X \subseteq \alpha + \beta$ such that $a \in (\langle FG_3, (a_i)_{i \in X} \rangle_{\mathfrak{M}})^{< \omega}$ by $\mathcal{B} \preccurlyeq \mathcal{D} \preccurlyeq \mathfrak{M}$, Lemma \ref{Konig} and Proposition \ref{closed_implies_subgeo}. Let now
	\[\mathcal{A} = \mathrm{CS}(FG_3, (a_i, B_i)_{i \in X \cap \alpha}) \;\; \text{ and } \;\; \mathcal{C}_a = \mathrm{CS}(FG_3, (a_i, B_i)_{i \in X}).\] 
Then, $\langle C_a, B \rangle_{\mathfrak{M}} = \mathcal{C}_{a} \oplus_{\mathcal{A}} \mathcal{B} \preccurlyeq \mathcal{D} \preccurlyeq \mathfrak{M}$.

\smallskip
\noindent
We prove (e). Let $\mathcal{A} \preccurlyeq \mathcal{B} \preccurlyeq \mathfrak{M}$, $a \in \mathfrak{M}^{< \omega}$ and $\mathcal{A} \preccurlyeq \mathcal{C}_a \preccurlyeq \mathfrak{M}$ such that $a \in C_a^{< \omega}$. Let now $f:\mathcal{C}_a \cong \mathcal{C}'_{a}$ be such that ${C}'_{a} \cap B = A$ and $f \restriction A = id_A$, then $\mathcal{A} \preccurlyeq \mathcal{C}'_{a}$ and so we can consider the structure $\mathcal{C}'_{a} \oplus_{\mathcal{A}} \mathcal{B}$. Now, $\mathcal{B} \preccurlyeq \mathfrak{M}$ and $\mathcal{B} \preccurlyeq \mathcal{C}'_{a} \oplus_{\mathcal{A}} \mathcal{B}$. Thus, by properties of our monster model, we can find a $\preccurlyeq$-embedding $g: \mathcal{C}'_{a} \oplus_{\mathcal{A}} \mathcal{B} \rightarrow \mathfrak{M}$ such that $g \restriction B = id_B$. Then $g \circ f: \mathcal{C}_a \cong g(\mathcal{C}'_a)$, $\mathcal{C}_a \preccurlyeq \mathfrak{M}$ and $g(\mathcal{C}'_a) \preccurlyeq \mathfrak{M}$, so by model homogeneity we can find $h \in \mathrm{Aut}(\mathfrak{M})$ extending $g \circ f$ and thus fixing $A$ pointwise, given that $g \circ f \restriction A = id_A$. Hence, $b = h(a)$ is so that $\mathrm{tp}(b/\mathcal{A}) = \mathrm{tp}(a/\mathcal{A})$ and $b \pureindep[\mathcal{A}] \mathcal{B}$.

\smallskip
\noindent
We prove (f). Let $\mathcal{A} \preccurlyeq \mathcal{B} \preccurlyeq \mathfrak{M}$, $a \pureindep[\mathcal{A}] \mathcal{B}$, $b \pureindep[\mathcal{A}] \mathcal{B}$ and suppose that $\mathrm{tp}(a/\mathcal{A}) = \mathrm{tp}(b/\mathcal{A})$. Let $\mathcal{A} \preccurlyeq \mathcal{C}_a$ be such that $a \in C^{< \omega}_a$ and $\langle C_a, B \rangle_{\mathfrak{M}} = \mathcal{C}_a \oplus_{\mathcal{A}} \mathcal{B} \preccurlyeq \mathfrak{M}$, and $\mathcal{A} \preccurlyeq \mathcal{C}_b$ such that $b \in C^{< \omega}_b$ and $\langle C_b, B \rangle_{\mathfrak{M}} = \mathcal{C}_b \oplus_{\mathcal{A}} \mathcal{B} \preccurlyeq \mathfrak{M}$.
By assumption $\mathrm{tp}(a/\mathcal{A}) = \mathrm{tp}(b/\mathcal{A})$, so there exists $f \in \mathrm{Aut}(\mathfrak{M}/\mathcal{A})$ such that $f(b) = a$. Let $\mathcal{D} \preccurlyeq \mathfrak{M}$ be such that $\mathcal{C}_a \preccurlyeq \mathcal{D}$ and $f(\mathcal{C}_b) \preccurlyeq \mathcal{D}$. 
%
Using the same argument used in the proof of (e), we can find $g \in \mathrm{Aut}(\mathfrak{M}/\mathcal{A})$ such that $\langle g(\mathcal{D}), B \rangle_{\mathfrak{M}} = g(\mathcal{D}) \oplus_{\mathcal{A}} \mathcal{B} \preccurlyeq \mathfrak{M}$. Let now $\mathcal{C}'_a = g(\mathcal{C}_a)$, $\mathcal{C}'_b = g \circ f (\mathcal{C}_b)$ and $\mathcal{D}' = g(\mathcal{D})$, then $\mathcal{C}'_a, \mathcal{C}'_b \preccurlyeq \mathcal{D}' \preccurlyeq \mathfrak{M}$. Thus, by properties of the free amalgam, we have that \begin{equation}\label{star}
\langle \mathcal{C}'_a, B \rangle_{\mathfrak{M}} = \mathcal{C}'_a \oplus_{\mathcal{A}} \mathcal{B} \preccurlyeq \mathfrak{M} \; \text{ and } \; \langle \mathcal{C}'_b, B \rangle_{\mathfrak{M}} = \mathcal{C}'_b \oplus_{\mathcal{A}} \mathcal{B} \preccurlyeq \mathfrak{M}.
\end{equation}
On the other hand, because of the independence assumption, we have that 
\begin{equation}\label{starstar}\langle \mathcal{C}_a, B \rangle_{\mathfrak{M}} = \mathcal{C}_a \oplus_{\mathcal{A}} \mathcal{B} \preccurlyeq \mathfrak{M} \; \text{ and } \; \langle \mathcal{C}_b, B \rangle_{\mathfrak{M}} = \mathcal{C}_b \oplus_{\mathcal{A}} \mathcal{B} \preccurlyeq \mathfrak{M}.
\end{equation}
Hence, we can find 
	$$\hat{t}: \langle \mathcal{C}_a, B \rangle_{\mathfrak{M}} = \mathcal{C}_a \oplus_{\mathcal{A}} \mathcal{B} \cong \mathcal{C}'_a \oplus_{\mathcal{A}} \mathcal{B} = \langle \mathcal{C}'_a, B \rangle_{\mathfrak{M}}$$
fixing $\mathcal{B}$ pointwise, and then use (1) and (2) and model homogeneity to extend $\hat{t}$ to a $t \in \mathrm{Aut}(\mathfrak{M}/\mathcal{B})$. Analogously, we can find 
	$$\hat{p}:  \langle \mathcal{C}'_b, B \rangle_{\mathfrak{M}} = \mathcal{C}'_b \oplus_{\mathcal{A}} \mathcal{B} \cong \mathcal{C}_b \oplus_{\mathcal{A}} \mathcal{B} = \langle \mathcal{C}_b, B \rangle_{\mathfrak{M}}$$
fixing $\mathcal{B}$ pointwise, and then use (1) and (2) and model homogeneity to extend $\hat{p}$ to a $p \in \mathrm{Aut}(\mathfrak{M}/\mathcal{B})$. But now we are done, as in fact $p \circ t \in \mathrm{Aut}(\mathfrak{M}/\mathcal{B})$ is a witness for $\mathrm{tp}(a/\mathcal{B}) = \mathrm{tp}(b/\mathcal{B})$, given that
	$$ a \xmapsto{t} g(a) = g \circ f(b) \xmapsto{p} b.$$

\smallskip
\noindent
Finally, we prove (g). Let $\delta$ be limit, $(\mathcal{A}_i)_{i \leq \delta}$ an increasing continuous $\preccurlyeq$-chain, $\mathcal{A}_{\delta} \preccurlyeq \mathfrak{M}$ and $a$, $(a_i)_{i < \delta}$ such that $a_i \pureindep[\mathcal{A}_0] \mathcal{A}_i$ and $\mathrm{tp}(a_i/\mathcal{A}_i) = \mathrm{tp}(a/\mathcal{A}_i)$ for every $i < \delta$. By (d), there exists $\alpha < \delta$ such that $a \pureindep[\mathcal{A}_{\alpha}] \mathcal{A}_{\delta}$. Furthermore, by (a), we have that $a \pureindep[\mathcal{A}_{0}] \mathcal{A}_{\alpha}$. Thus, by Proposition \ref{transitivity}, we have that $a \pureindep[\mathcal{A}_{0}] \mathcal{A}_{\delta}$, as wanted.

\end{proof} 

	\begin{corollary}\label{stability} $(\mathbf{K}^3, \preccurlyeq)$ is stable in every infinite cardinality.
\end{corollary}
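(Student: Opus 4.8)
The plan is to count Galois types by using the independence calculus of Theorem \ref{good_frame_th}. Fix an infinite cardinal $\lambda$; I must show that $|S(\mathcal{B})| \leq \lambda$ for every $\mathcal{B} \in \mathbf{K}^3$ with $|B| \leq \lambda$, and we may assume $\mathcal{B} \preccurlyeq \mathfrak{M}$. Since $S(\mathcal{B}) = \bigcup_{n < \omega} S^n(\mathcal{B})$, it suffices to bound $|S^n(\mathcal{B})|$ for each fixed $n < \omega$. The two ingredients are: (a) over a \emph{finite} base there are only countably many Galois types of $n$-tuples; and (b) by the finitary local character and the uniqueness of non-forking extensions, every type over $\mathcal{B}$ is coded by its restriction to a finite $\preccurlyeq$-substructure.

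For (a), let $\mathcal{A} \preccurlyeq \mathfrak{M}$ be finite and take $p \in S^n(\mathcal{A})$ with a realization $a \in \mathfrak{M}^n$. Writing $\mathfrak{M} = \mathrm{CS}(FG_3, (a_i, B_i)_{i < \alpha})$ (possible since $FG_3 \preccurlyeq \mathfrak{M}$), the finitely many points of $\mathcal{A}$ and the points occurring in the coordinates of $a$ (each element of $\mathfrak{M}$ being $0$, $1$, or a join of at most three points, and each non-$FG_3$ point being some $a_i$) all lie in $\langle FG_3, (a_i)_{i \in X'} \rangle_{\mathfrak{M}}$ for some finite $X' \subseteq \alpha$. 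By Lemma \ref{Konig} enlarge $X'$ to a finite closed $X$, and set $\mathcal{C}_a := \mathrm{CS}(FG_3, (a_i, B_i)_{i \in X})$. By Proposition \ref{closed_implies_subgeo} and Lemma \ref{closed_implies_strong_subgeo}, $\mathcal{C}_a = \langle FG_3, (a_i)_{i \in X} \rangle_{\mathfrak{M}}$ is a finite structure, $\mathcal{C}_a \preccurlyeq \mathfrak{M}$, it contains $a$, and $\mathcal{A}$ is a $\wedge$-subgeometry of $\mathcal{C}_a$ (since $\mathcal{A}$ and $\mathcal{C}_a$ are $\wedge$-subgeometries of $\mathfrak{M}$ with $P(\mathcal{A}) \subseteq P(\mathcal{C}_a)$). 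Now $p$ is determined by the isomorphism type of $(\mathcal{C}_a, a)$ over $\mathcal{A}$: if $q \in S^n(\mathcal{A})$ has a realization $b$ and $(\mathcal{C}_b, b) \cong (\mathcal{C}_a, a)$ via an isomorphism fixing $A$ pointwise and sending $a$ to $b$, then --- both $\mathcal{C}_a, \mathcal{C}_b$ being $\preccurlyeq \mathfrak{M}$ --- model homogeneity extends this isomorphism to an automorphism of $\mathfrak{M}$ fixing $\mathcal{A}$, so $q = p$. Choosing one such datum per type gives an injection of $S^n(\mathcal{A})$ into the set of isomorphism types of pairs $(\mathcal{C}, \bar c)$ with $\mathcal{C}$ a finite plane containing $\mathcal{A}$ as a $\wedge$-subgeometry and $\bar c \in C^n$; since $\mathcal{A}$ is finite and there are only countably many finite geometric lattices of rank $3$ up to isomorphism, this set is countable, so $|S^n(\mathcal{A})| \leq \aleph_0$.

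For (b), note first that $\mathcal{B}$ has at most $\lambda$ finite $\preccurlyeq$-substructures, since each is determined by its finite underlying subset of $B$. Next, by the proof of Theorem \ref{good_frame_th}(3)(d) --- where a finitary local character is actually established --- every $p \in S^n(\mathcal{B})$ has a realization $a$ and a finite $\mathcal{A}_p \preccurlyeq \mathcal{B}$ with $a \pureindep[\mathcal{A}_p] \mathcal{B}$; and by uniqueness of the non-forking extension (Theorem \ref{good_frame_th}(3)(f)) the pair $(\mathcal{A}_p, p \restriction \mathcal{A}_p)$ determines $p$. Hence $|S^n(\mathcal{B})| \leq \sum \{\, |S^n(\mathcal{A})| : \mathcal{A} \preccurlyeq \mathcal{B}, \ \mathcal{A} \text{ finite} \,\} \leq \lambda \cdot \aleph_0 = \lambda$, and summing over $n < \omega$ yields $|S(\mathcal{B})| \leq \lambda$. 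As $\lambda$ and $\mathcal{B}$ were arbitrary, $(\mathbf{K}^3, \preccurlyeq)$ is stable in every infinite cardinality.

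I expect the main obstacle to be step (a), and within it the passage from an arbitrary finite tuple of $\mathfrak{M}$ to a finite $\preccurlyeq$-substructure of $\mathfrak{M}$ containing it and having $\mathcal{A}$ as a $\wedge$-subgeometry: this is exactly where Lemma \ref{Konig}, Proposition \ref{closed_implies_subgeo}/Lemma \ref{closed_implies_strong_subgeo} and the combinatorics of constructions are needed, together with the standard (but essential) fact that, up to isomorphism, there are only countably many finite rank-$3$ geometric lattices. Everything else is routine cardinal bookkeeping once these finiteness facts are in hand.
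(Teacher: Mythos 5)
Your proof is correct and follows essentially the same route as the paper: the core of your step (b) — finitary local character from the proof of (d) of Theorem \ref{good_frame_th} plus uniqueness of non-forking extensions (f), so that a type over $\mathcal{B}$ is determined by a finite $\preccurlyeq$-substructure together with its restriction there — is exactly the paper's argument. Your step (a) merely fills in the detail behind the paper's closing remark that $(\mathbf{K}^3, \preccurlyeq)$ is $\omega$-stable (countably many Galois types over a finite base, via finite closed sets, Lemma \ref{Konig}, Proposition \ref{closed_implies_subgeo}, Lemma \ref{closed_implies_strong_subgeo} and model homogeneity), which is a welcome but not divergent addition.
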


\begin{proof} Let $a, b \in \mathfrak{M}^{< \omega}$ and $\mathcal{B} \preccurlyeq \mathfrak{M}$. By what we showed in (d) of the proof of Theorem \ref{good_frame_th}, there are finite $\mathcal{A}(a, \mathcal{B}), \mathcal{A}(b, \mathcal{B}) \preccurlyeq \mathcal{B}$ such that $a \pureindep[\mathcal{A}(a, \mathcal{B})] \mathcal{B}$ and $b \pureindep[\mathcal{A}(b, \mathcal{B})] \mathcal{B}$. By (f) of Theorem \ref{good_frame_th}, we then have that if $\mathcal{A}(a, \mathcal{B}) = \mathcal{A}(b, \mathcal{B})$ and $\mathrm{tp}(a/\mathcal{A}(a, \mathcal{B})) = \mathrm{tp}(b/ \mathcal{A}(b, \mathcal{B}))$, then $\mathrm{tp}(a/\mathcal{B}) = \mathrm{tp}(b/\mathcal{B})$. This suffices to establish stability in every infinite cardinality as $(\mathbf{K}^3, \preccurlyeq)$ is $\omega$-stable.
\end{proof}

\section{Restoring Smoothness}

	As announced, we come back to the study of $(\mathbf{K}_{*}^{n}, \preccurlyeq^*)$ and the associated pair $(\mathbf{K}_{+}^{n}, \preccurlyeq^+)$, which we will define later. We work under the assumption that $n = 3$. Our use of the assumption $n = 3$ in the development of the theory of $(\mathbf{K}_{*}^{3}, \preccurlyeq^*)$ and $(\mathbf{K}_{+}^{3}, \preccurlyeq^+)$ is limited to Lemmas \ref{change_order_spec} and \ref{canonical_determines_str}, and Proposition \ref{free_amalgamation_spec}. The proofs of these facts are rather technical, and require careful case distinctions, this is the main reason for assuming that $n = 3$. We conjecture that these facts can be proved for every $n < \omega$, and consequently that the theory of $(\mathbf{K}_{*}^{3}, \preccurlyeq^*)$ and $(\mathbf{K}_{+}^{3}, \preccurlyeq^+)$ can be lifted from planes to geometries of arbitrary finite rank. 
Our aim is to develop some of the theory developed for $(\mathbf{K}^{n}, \preccurlyeq)$ in the case of $(\mathbf{K}_{*}^{3}, \preccurlyeq^*)$ and $(\mathbf{K}_{+}^{3}, \preccurlyeq^+)$, so as to have for $(\mathbf{K}_{+}^{3}, \preccurlyeq^+)$ (which depends on $(\mathbf{K}_{*}^{3}, \preccurlyeq^*))$ results similar to the ones for $(\mathbf{K}^{n}, \preccurlyeq)$, while in addition restoring the Smoothness Axiom, which fails for $(\mathbf{K}^{n}, \preccurlyeq)$, as we saw in Theorem \ref{th_fail_smooth}. 

First of all, we introduce the notion of {\em canonical witness} for $\mathcal{A} \preccurlyeq^* \mathcal{B}$. This notion will be most useful in what follows. Given a plane $\mathcal{A}$ and a line $a$ of $\mathcal{A}$, we denote by $P(a) = P_{\mathcal{A}}(a)$ the set of points of $\mathcal{A}$ incident with the line $a$.

	\begin{definition} We say that $(a_i, B_i)_{i \in (I, <)}$ is a {\em canonical} witness for $\mathcal{A} \preccurlyeq^* \mathcal{B}$ if the following conditions are satisfied:
	\begin{enumerate}[i)]
	\item $(a_i, B_i)_{i \in (I, <)}$ is a witness for $\mathcal{A} \preccurlyeq^* \mathcal{B}$;
	\item if $|B_i| = 3$, then $B_i \subseteq P(\mathcal{A})$ and it is an independent set;
	\item if $|B_i| = 2$ and $\bigvee B_i \in A$, then for every $a_{j}, a_{k} \in P(\bigvee B_i)$ we have that $B_{j} = B_{k} = \left\{ a, b \right\}$, for fixed $\left\{ a, b \right\} \subseteq P(\mathcal{A})$;
	\item if $|B_i| = 2$ and $P(\bigvee B_i) \cap A = \left\{ a \right\}$, then $B_i = \left\{ a, a_{j} \right\}$ for $j$ least so that $a \vee a_{j} = \bigvee B_i$;
	\item if $|B_i| = 2$ and $P(\bigvee B_i) \cap A = \emptyset$, then $B_i = \left\{ a_{j}, a_{k} \right\}$ for $j$ and $k$ least so that $a_{j} \vee a_{k} = \bigvee B_i$.
\end{enumerate}
\end{definition}

	Clearly we can always find a canonical witness for $\mathcal{A} \preccurlyeq^* \mathcal{B}$.

	\begin{proposition}\label{canonical_determines_str} If $(a_i, B_i)_{i \in (I, <)}$ is a canonical witness for both $\mathcal{A} \preccurlyeq^* \mathcal{B}$ and $\mathcal{A} \preccurlyeq^* \mathcal{B}'$, then $\mathcal{B} = \mathcal{B}'$. 
\end{proposition}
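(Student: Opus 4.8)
The plan is to proceed by transfinite induction along the linear order $(I,<)$, showing that the partial geometries $\langle A,(a_i)_{i\leq j}\rangle_{\mathcal B}$ and $\langle A,(a_i)_{i\leq j}\rangle_{\mathcal B'}$ coincide for every $j\in I$ (identifying geometries that agree on points via the convention after the Birkhoff--Whitney theorem). By condition a) of Definition \ref{def_of_strong} these increasing unions exhaust $\mathcal B$ and $\mathcal B'$ respectively, so matching them stagewise gives $\mathcal B=\mathcal B'$. The successor step is the heart of the matter: given that $\langle A,(a_i)_{i<j}\rangle_{\mathcal B}=\langle A,(a_i)_{i<j}\rangle_{\mathcal B'}=:\mathcal D_j$, condition b) of Definition \ref{def_of_strong} tells us that both $\langle A,(a_i)_{i\leq j}\rangle_{\mathcal B}$ and $\langle A,(a_i)_{i\leq j}\rangle_{\mathcal B'}$ are the one-point extension $\mathcal D_j\oplus_{\bigvee B_j}a_j$, where $\bigvee B_j=(\bigvee B_j)^{\mathcal D_j}$ is computed in the \emph{same} lattice $\mathcal D_j$. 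Since $B_j$ is part of the shared witness data, $\bigvee B_j$ is literally the same element of $\mathcal D_j$ in both cases, and by the uniqueness clause of Crapo's theorem (Theorem \ref{one_point_2}) the one-point extension is determined up to isomorphism fixing the points; hence the two geometries agree. The limit step is immediate: if $j$ is a limit of $I$, then $\langle A,(a_i)_{i<j}\rangle_{\mathcal B}=\bigcup_{k<j}\langle A,(a_i)_{i\leq k}\rangle_{\mathcal B}$, and likewise for $\mathcal B'$, so the inductive hypothesis propagates.

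The one genuine subtlety — and the step I expect to be the main obstacle — is that the definition of canonical witness does not by itself guarantee that $B_j$ (as a set of elements of $P(\mathcal A)\cup\{a_i:i<j\}$) computes to the \emph{same} join in $\mathcal D_j$ independently of which of $\mathcal B,\mathcal B'$ we started from. This is precisely what the canonicity conditions ii)--v) are engineered to repair: they pin down $B_j$ as a combinatorially specified subset (an independent triple of points of $\mathcal A$, or a fixed pair of points of $\mathcal A$, or the pair of least-indexed generators of the relevant flat) rather than an arbitrary generating set, so that $\bigvee B_j$ is an \emph{invariant} of the flat it generates in $\mathcal D_j$. One must therefore check, inside the induction, that the flat $(\bigvee B_j)^{\mathcal D_j}$ really is determined — this uses that $\mathcal D_j$ is the same lattice on both sides (the inductive hypothesis) together with conditions ii)--v), which reference only $P(\mathcal A)$ and the already-constructed $a_i$ with $i<j$, hence are evaluated in $\mathcal D_j$. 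The rank-$3$ hypothesis enters here in controlling the possible configurations of $\bigvee B_j$: it is either a point, a line, or the top, and the case split in the definition of canonical witness (whether $\bigvee B_i\in A$, or $P(\bigvee B_i)\cap A$ is a singleton, or empty) is exhaustive for $|B_i|=2$, while $|B_i|=3$ forces $\bigvee B_i=1$.

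I would organize the write-up as: first observe that the claim reduces, via a) of Definition \ref{def_of_strong}, to proving $\langle A,(a_i)_{i\leq j}\rangle_{\mathcal B}=\langle A,(a_i)_{i\leq j}\rangle_{\mathcal B'}$ for all $j$; then set up the transfinite induction with the limit case dispatched by continuity of $\langle\cdot\rangle$; then treat the successor case by invoking b) of Definition \ref{def_of_strong} to present both sides as $\mathcal D_j\oplus_{\bigvee B_j}a_j$ and applying Theorem \ref{one_point_2}; and finally, in the successor case, verify that $(\bigvee B_j)^{\mathcal D_j}$ is the same element on both sides by walking through the canonicity clauses ii)--v), noting in each clause that the data determining $B_j$ lives in $P(\mathcal A)\cup\{a_i:i<j\}$ and is therefore interpreted identically in $\mathcal D_j$. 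The routine verifications — that $B_j$ as prescribed really does generate the intended flat, and that the least-index choices in iv) and v) are well-defined — I would state and leave to the reader, as they are direct unwindings of the definitions.
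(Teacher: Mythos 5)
There is a genuine gap, and it is the central one: your transfinite induction along $(I,<)$ presupposes that $(I,<)$ is well-founded, but in the definition of $\preccurlyeq^*$ the witness is indexed by an \emph{arbitrary} linear order. Dropping well-foundedness is the entire point of $\preccurlyeq^*$ as opposed to $\preccurlyeq$: in Theorem \ref{th_fail_smooth} the witness for $\mathcal{A}_{\omega} \preccurlyeq^* \mathcal{B}_{\omega}$ has order type $\cdots < D_n < \cdots < D_1 < D_0$, and if every witness could be rearranged into a well-order the two relations would coincide, which that theorem refutes. For such an order there is no least element to start from and no valid induction principle ``if it holds below $j$ then it holds at $j$, hence everywhere,'' so your successor/limit scheme never gets off the ground. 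Your argument is correct exactly in the case where $(I,<)$ happens to be a well-order, but there the claim is essentially trivial (the structure is $\mathrm{CS}(\mathcal{A},(a_i,B_i)_{i<\alpha})$, defined by recursion from the data); the content of the proposition is precisely the non-well-founded case. Moreover, Remark \ref{technical_req} shows that without condition a) the same witness data can be realized by non-isomorphic planes, so any proof must genuinely use the canonicity clauses in a way that does not reduce to ``build both structures stage by stage from the same data'' — which is what your plan does.

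The paper's proof avoids induction along $(I,<)$ altogether. Since $\mathcal{B}$ and $\mathcal{B}'$ are rank-$3$ geometries on the same point set, it suffices to show that collinearity transfers: for distinct points $a,b,c$ with $|\{a,b,c\}\cap A|\leq 2$, if $r(a\vee b\vee c)=2$ in $\mathcal{B}$ then the same holds in $\mathcal{B}'$. This is proved by a case analysis on how many of $a,b,c$ lie outside $A$, using the canonicity clauses directly (all points added to a line of $\mathcal{A}$ carry the same fixed pair $B_i\subseteq P(\mathcal{A})$; otherwise $B_i$ consists of least-indexed generators), so that the collinearity in $\mathcal{B}$ forces the relevant $B_i$'s to pin down the same line in $\mathcal{B}'$. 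If you want to salvage your write-up, you should replace the stagewise induction by such a local argument on triples of points; your discussion of why clauses ii)--v) make $\bigvee B_j$ canonical is in the right spirit, but it has to be deployed pointwise rather than inside a recursion that the order structure does not support.
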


	\begin{proof} It suffices to show that for every distinct $a, b, c \in P(\mathcal{B})$ with $|\left\{ a, b, c \right\} \cap A| \leq 2$ we have that
	$$ r(a \vee b \vee c) = 2 \, \text{ in } \, \mathcal{B} \;\; \Rightarrow \;\; r(a \vee b \vee c) = 2 \, \text{ in } \, \mathcal{B}'.$$
\newline {\bf Case A.} $\left\{ a, b, c \right\} - A = \left\{ a_i \right\}$. If $r(a \vee b \vee c) = 2$ in $\mathcal{B}$, $\mathcal{B} \models a \vee b \vee c = a \vee b$, then the set $\left\{ a, b, c \right\}$ is dependent in $\mathcal{B}$ and so $a_i$ is such that $B_{i} = \left\{ d, e \right\} \subseteq P(\mathcal{A})$ with $d \vee e = \bigvee (\left\{ a, b, c \right\} \cap A)$. Thus, $\mathcal{B}' \models a \vee b \vee c = a \vee b$, because also in $\mathcal{B}'$ the point $a_i$ gets added under the line $d \vee e = \bigvee (\left\{ a, b, c \right\} \cap A)$ which is in $\mathcal{A}$. 
\newline {\bf Case B.} $\left\{ a, b, c \right\} - A = \left\{ a_i, a_j \right\}$, for $i < j$. Let $\left\{ a, b, c \right\} \cap A = \left\{ x \right\}$ and suppose that $\mathcal{B} \models a \vee b \vee c = a \vee b$. Then either: i) $B_i = B_j = \left\{ d, e \right\} \subseteq P(\mathcal{A})$ with $d \vee e = a \vee b$, or ii) $B_j = \left\{ x, a_i \right\}$, or iii) $B_j = B_i = \left\{ x, a_t \right\}$ for $t < i$. In all these cases we are fine.
\newline {\bf Case C.} $\left\{ a, b, c \right\} - A = \left\{ a_{i}, a_{j}, a_{t} \right\}$, for $i < j < t$. Suppose that $\mathcal{B} \models a \vee b \vee c = a \vee b$.
\newline {\bf Case C.1.} $B_{t} \subseteq A$. Then $B_{i} = B_{j} = B_{t} = \left\{ d, e \right\} \subseteq P(\mathcal{A})$, and so $\mathcal{B}' \models a_{i} \vee a_{j} \vee a_{t} = d \vee e = a \vee b$.
\newline {\bf Case C.2.} $B_{t} \cap A = \left\{ a \right\}$. Then $B_{j} = B_{t} = \left\{ a, a_m \right\}$, for $m \leq i$. If $m < i$, we have $B_i = B_{j} = B_{t} = \left\{ a, a_m \right\}$. If $m = i$, we have $B_{j} = B_{t} = \left\{ a, a_i \right\}$. Hence, in both cases we have $\mathcal{B}' \models a_{i} \vee a_{j} \vee a_{t} = a \vee a_m = a \vee b$.
\newline {\bf Case C.3.} $B_{t} \cap A = \emptyset$. Then $B_{t} = \left\{ a_m, a_k \right\}$, for $m < k$, $m \leq i$ and $k \leq j$. 
\newline {\bf Case C.3.1.} $m = i$ and $k = j$. Then of course $\mathcal{B}' \models a_{i} \vee a_{j} \vee a_{t} = a \vee b$. 
\newline {\bf Case C.3.2.} $m < i$ and $k = j$. Then $\mathcal{B} \models a_{t} < a_{i} \vee a_{k}$ and $\mathcal{B} \models  a_t < a_m \vee a_k$. Thus, $\mathcal{B} \models a_{m} \vee a_{i} = a_m \vee a_k$ and so $\mathcal{B} \models  a_t < a_m \vee a_i$. Hence, $m$ and $k$ are not least, a contradiction.
\newline {\bf Case C.3.3.} $m = i$ and $k < j$. Then $\mathcal{B} \models a_{t} < a_{i} \vee a_{j}$ and $\mathcal{B} \models a_{t} < a_{i} \vee a_k$. Thus, $\mathcal{B} \models a_{i} \vee a_{j} = a_{i} \vee a_k$ and so $B_{j} = B_t = \left\{ a_m, a_k \right\}$. Hence, $\mathcal{B}' \models a_{i} \vee a_{j} \vee a_{t} = a_m \vee a_k = a_i \vee a_k = a \vee b$.
\newline {\bf Case C.3.4.} $m < i$ and $k < j$. Then $\mathcal{B} \models a_{t} < a_{i} \vee a_{j}$ and $\mathcal{B} \models  a_t < a_m \vee a_k$. Thus, $\mathcal{B} \models a_{m} \vee a_{i} = a_m \vee a_k$ and so $k \leq i$ otherwise, as $\mathcal{B} \models a_t < a_m \vee a_i$, $m$ and $k$ would not be least. Then $m < k \leq i < j$ and so $B_{j} = B_t = \left\{ a_m, a_k \right\}$. Hence, $\mathcal{B}' \models a_{i} \vee a_{j} \vee a_{t} = a_m \vee a_k = a \vee b$.
\end{proof}
	
	 We now prove an analog of Lemma \ref{change_order}.

\begin{lemma}\label{change_order_spec} Let $(a_i, B_i)_{i \in (I, <_0)}$ be a witness for $\mathcal{A} \preccurlyeq^* \mathcal{B}$ and $(I, <_1)$ a linear order such that 
\begin{enumerate}[i)]
\item for every $j \in I$, $B_{j} \subseteq P(\mathcal{A}) \cup \left\{ a_{i} \, | \, i <_1 j \right\}$;
\item for every $x \in B - A$  there exists $j \in I$ such that $x \in \langle A, (a_i)_{i \leq_1 j}\rangle_{\mathcal{B}} - \langle A, (a_i)_{i <_1 j}\rangle_{\mathcal{B}}$.
\end{enumerate} 
Then $(a_i, B_i)_{i \in (I, <_1)}$ is a witness for $\mathcal{A} \preccurlyeq^* \mathcal{B}$.
\end{lemma}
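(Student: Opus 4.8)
The plan is to verify the two clauses of Definition~\ref{def_of_strong}(ii) for the reordered sequence $(a_i,B_i)_{i\in(I,<_1)}$. That $(a_i)_{i\in(I,<_1)}$ linearly orders $P(\mathcal B)-P(\mathcal A)$ is clear, the requirement $B_i\in\mathcal P_{2/n}(P(\mathcal A)\cup\{a_j\mid j<_1 i\})$ is exactly hypothesis~i), and clause~a) of the witness definition is exactly hypothesis~ii). So everything reduces to clause~b): for every $j\in I$, writing $\mathcal L=\langle A,(a_i)_{i<_1 j}\rangle_{\mathcal B}$ and $v=(\bigvee B_j)^{\mathcal L}$, we must show $\langle A,(a_i)_{i\leq_1 j}\rangle_{\mathcal B}=\mathcal L\oplus_v a_j$. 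Since $\mathcal L$ is a subgeometry of $\mathcal B$ and $\langle A,(a_i)_{i\leq_1 j}\rangle_{\mathcal B}=\langle P(\mathcal L)\cup\{a_j\}\rangle_{\mathcal B}$, Proposition~\ref{one_point_1} together with the uniqueness in Theorem~\ref{one_point_2} reduces this to a single assertion about the modular cut $C=\{x\in\mathcal L\mid\mathcal B\models a_j\leq x\}$ of $\mathcal L$: namely that $C$ is the \emph{principal} cut generated by $v$.

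Now $v\in C$, because clause~b) of the given $<_0$-witness at stage $j$ already puts $a_j\leq\bigvee B_j$ inside $\langle A,(a_i)_{i\leq_0 j}\rangle_{\mathcal B}$, hence in $\mathcal B$; so $\uparrow v\subseteq C$ and it remains to see $C\subseteq\uparrow v$. Since $\mathcal B$ has rank $3$, the elements of $\mathcal L$ above $a_j$ other than $1$ are exactly the lines of $\mathcal L$ through $a_j$, and each such line is a join $p\vee q$ of two points of $\mathcal L$. Thus $C=\uparrow v$ is equivalent to the statement that $\mathcal L$ contains \emph{at most one} line through $a_j$ (and none if $v=1$), which I would phrase combinatorially as the goal: whenever $p,q\in P(\mathcal A)\cup\{a_i\mid i<_1 j\}$ are collinear with $a_j$ in $\mathcal B$, then $\bigvee B_j\leq p\vee q$.

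The proof of this last statement would proceed by tracking the positions of $p$ and $q$ in the \emph{original} order $<_0$. The base case — both of $p,q$ lie in $P(\mathcal A)\cup\{a_i\mid i<_0 j\}$ — is clean: then $p\vee q\in\langle A,(a_i)_{i<_0 j}\rangle_{\mathcal B}$, and since $\langle A,(a_i)_{i\leq_0 j}\rangle_{\mathcal B}=\langle A,(a_i)_{i<_0 j}\rangle_{\mathcal B}\oplus_{\bigvee B_j}a_j$ is a subgeometry of $\mathcal B$, collinearity of $a_j$ with $p,q$ forces $p\vee q\geq\bigvee B_j$ and excludes $\bigvee B_j=1$. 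Otherwise one of $p,q$, say $p=a_m$ with $m>_0 j$, is added later in $<_0$; invoking clause~b) of the $<_0$-witness at stage $m$ (where $a_j$ and the other of $p,q$ already live) identifies $p\vee q$ with $\bigvee B_m$, so the hypothesis is rewritten as a collinearity of $a_j$ with the points of $B_m$ — which, by hypothesis~i) applied to $B_m$ and by clause~a) of the $<_0$-witness, are again indexed below $m$ in both orders; one then iterates.

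The genuinely delicate part — and the place where the hypothesis $n=3$ is used — is making this descent terminate and cataloguing the cases: one must split on $|B_i|\in\{2,3\}$ (including the possibility of a collinear triple for a non-canonical witness), on how many of the points involved lie in $\mathcal A$, and on the relative $<_0$- and $<_1$-orderings of all the indices that occur, exactly in the style of the proof of Proposition~\ref{canonical_determines_str}. I expect this to be the main obstacle. Note in particular that one cannot shortcut it by localizing to a finite $B$-closed subset of $I$ and applying Lemma~\ref{finitary_change_order}: as the $\preccurlyeq^*$-witnesses produced in the proof of Theorem~\ref{th_fail_smooth} show, the $B$-dependency closure of a finite subset of $I$ need not be finite, so $\langle A,(a_i)_{i<_1 j}\rangle_{\mathcal B}$ and $\langle A,(a_i)_{i<_0 j}\rangle_{\mathcal B}$ really do have to be compared by the hand-to-hand case analysis above.
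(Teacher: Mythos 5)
Your reduction is fine and coincides with the first step of the paper's argument: since $a_j\leq(\bigvee B_j)^{\mathcal B}$ is guaranteed by the $<_0$-witness at stage $j$, the only way clause b) can fail at $j$ in the $<_1$-order is that $\langle A,(a_i)_{i<_1 j}\rangle_{\mathcal B}$ contains a second line $l\neq\bigvee B_j$ with $a_j\leq l$ in $\mathcal B$. But the part you defer --- ruling out such an $l$ --- is the entire content of the lemma, and the descent you sketch does not close it. Your iteration replaces the pair $\{p,q\}$ by points of $B_m$, where $m$ is the $<_0$-largest index involved, and the only quantity that decreases is the $<_0$-position (equivalently the $<_1$-position) of the indices. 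Neither $(I,<_0)$ nor $(I,<_1)$ is a well-order --- the whole point of $\preccurlyeq^*$ as opposed to $\preccurlyeq$ is to allow infinite descending chains, and the witnesses in Theorem \ref{th_fail_smooth} show such chains really occur --- so ``one then iterates'' has no termination guarantee. You flag this yourself as ``the main obstacle,'' but flagging it is not resolving it; as written the proof is incomplete exactly at its core.

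The way out (and the paper's route) is to avoid any descent by anchoring the argument at the line $l$ itself rather than at an arbitrary pair of its points: since $l\in B-A$, hypothesis ii) provides a stage $t<_1 j$ with $l\in\langle A,(a_i)_{i\leq_1 t}\rangle_{\mathcal B}-\langle A,(a_i)_{i<_1 t}\rangle_{\mathcal B}$, so $l=a\vee a_t$ with $a\in P(\mathcal A)$, or $l=a_s\vee a_t$ with $s<_1 t$ (if $l\in A$ one already gets a contradiction with the $<_0$-witness at stage $j$, since then $l$ and $\bigvee B_j$ both lie in $\langle A,(a_i)_{i<_0 j}\rangle_{\mathcal B}$ and the extension by $a_j$ there is principal). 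Now compare $<_0$-positions of the at most three indices $s,t,j$: if they all precede $j$ in $<_0$, the same contradiction at $<_0$-stage $j$ applies; otherwise the $<_0$-largest of them, say $r$, is added at a $<_0$-stage where the other two points already exist and span $l$, so principality forces $\bigvee B_r=l$, and then hypothesis i) ($B_r\subseteq P(\mathcal A)\cup\{a_i\mid i<_1 r\}$, with $r\leq_1 t$ in every case) puts $l$ inside $\langle A,(a_i)_{i<_1 t}\rangle_{\mathcal B}$, contradicting the choice of $t$. This single case analysis terminates outright, so no well-foundedness is needed; your closing remark that one cannot localize to a finite closed subset of $I$ is correct but does not substitute for this step.
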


	\begin{proof} Suppose that the conclusion is false, then there exists $j \in I$ such that 
	\begin{equation}\label{star3}
 \langle A, (a_i)_{i \leq_1 j} \rangle_{\mathcal{B}} \neq \langle A, (a_i)_{i <_1 j} \rangle_{\mathcal{B}} \oplus_{\bigvee B_j} a_j. 
  \end{equation}
By assumption $B_{j} \subseteq P(\mathcal{A}) \cup \left\{ a_{i} \, | \, i <_1 j \right\}$, and so
	$$a_j \leq (\bigvee B_j)^{\langle A, (a_i)_{i \leq_1 j}\rangle_{\mathcal{B}}} = (\bigvee B_j)^{\mathcal{B}}.$$	
Thus, the only reason for (\ref{star3}) to happen is that there exists a line $l \neq \bigvee B_j$ in $\langle A, (a_i)_{i <_1 j}\rangle_{\mathcal{B}}$ such that $a_j \leq l, \bigvee B_j$. 
If $l \in A$, then $l, \bigvee B_j \in \langle A, (a_i)_{i <_0 j}\rangle_{\mathcal{B}}$, contradicting the fact that $(a_i, B_i)_{i \in (I, <_0)}$ is a witness for $\mathcal{A} \preccurlyeq^* \mathcal{B}$. Thus, $l \not\in A$ and so, by (ii), we can find  $t \in I$ such that $l \in \langle A, (a_i)_{i \leq_1 t}\rangle_{\mathcal{B}} - \langle A, (a_i)_{i <_1 t}\rangle_{\mathcal{B}}$, and of course $t <_1 j$, because $l \in\langle A, (a_i)_{i <_1 j}\rangle_{\mathcal{B}}$.
\newline {\bf Case A.} $l = a \vee a_t$ for $a \in P(\mathcal{A})$. Suppose that $t <_0 j$, then $l, \bigvee B_j \in \langle A, (a_i)_{i <_0 j}\rangle_{\mathcal{B}}$, contradicting the fact that $(a_i, B_i)_{i \in (I, <_0)}$ is a witness for $\mathcal{A} \preccurlyeq^* \mathcal{B}$. Thus $j <_0 t$, and so $\bigvee B_t = l = a \vee a_j$, but $$B_{t} \subseteq P(\mathcal{A}) \cup \left\{ a_{i} \, | \, i <_1 t \right\},$$ and so $l \in \langle A, (a_i)_{i <_1 t}\rangle_{\mathcal{B}}$, a contradiction.
\newline {\bf Case B.} $l = a_s \vee a_t$ for $s <_1 t <_1 j$. Suppose that $s, t <_0 j$, then $l, \bigvee B_j \in \langle A, (a_i)_{i <_0 j}\rangle_{\mathcal{B}}$, contradicting the fact that $(a_i, B_i)_{i \in (I, <_0)}$ is a witness for $\mathcal{A} \preccurlyeq^* \mathcal{B}$. Thus either $j <_0 t$ or $j <_0 s$.
\newline {\bf Case B.1.} $j <_0 t$.
\newline {\bf Case B.1.1.} $s <_0 t$. If this is the case, then $\bigvee B_t = l = a_j \vee a_s$, but $$B_{t} \subseteq P(\mathcal{A}) \cup \left\{ a_{i} \, | \, i <_1 t \right\},$$ and so $l \in \langle A, (a_i)_{i <_1 t}\rangle_{\mathcal{B}}$, a contradiction.
\newline {\bf Case B.1.2.} $t <_0 s$. If this is the case, then $\bigvee B_s = l = a_j \vee a_t$, but $$B_{s} \subseteq P(\mathcal{A}) \cup \left\{ a_{i} \, | \, i <_1 s <_1 t \right\},$$ and so $l \in \langle A, (a_i)_{i <_1 t}\rangle_{\mathcal{B}}$, a contradiction.
\newline {\bf Case B.2.} $j <_0 s$.
\newline {\bf Case B.2.1.} $s <_0 t$. Exactly as in Case B.1.1. 
\newline {\bf Case B.2.2.} $t <_0 s$. Exactly as in Case B.1.2.  
\end{proof}

	\begin{corollary}\label{change_order_for_canonical} Let $(a_i, B_i)_{i \in (I, <_0)}$ be a canonical witness for $\mathcal{A} \preccurlyeq^* \mathcal{B}$ and $(I, <_1~\!)$ a linear order such that for every $j \in I$, $B_{j} \subseteq P(\mathcal{A}) \cup \left\{ a_{i} \, | \, i <_1 j \right\}$.
Then $(a_i, B_i)_{i \in (I, <_1)}$ is a canonical witness for $\mathcal{A} \preccurlyeq^* \mathcal{B}$.
\end{corollary}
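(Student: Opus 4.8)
The plan is to first obtain, via Lemma~\ref{change_order_spec}, that $(a_i,B_i)_{i\in(I,<_1)}$ is a witness for $\mathcal{A}\preccurlyeq^*\mathcal{B}$, and then to check that this reordered witness is moreover canonical. Hypothesis (i) of Lemma~\ref{change_order_spec} is exactly the hypothesis of the corollary, so the work in the first step is to verify hypothesis (ii) of that lemma, i.e.\ that relative to $<_1$ every line $x\in B-A$ is created at some stage. In the second step only clauses (iv)--(v) in the definition of a canonical witness refer to the ordering (clause (i) is handled by Lemma~\ref{change_order_spec}, and clauses (ii)--(iii) concern only the sets $B_i$ and the lines $\bigvee B_i$, hence are untouched by reordering), so what remains is to re-derive (iv) and (v) for $<_1$.

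First I would record a rigidity fact about the given $<_0$-witness: for every $k\in I$ and every line $x$ of $\mathcal{B}$ with $a_k\leqslant x$ and $x\in\langle A,(a_j)_{j<_0 k}\rangle_{\mathcal{B}}$, necessarily $\bigvee B_k=x$. Indeed, by the definition of a witness, $\langle A,(a_j)_{j\leqslant_0 k}\rangle_{\mathcal{B}}$ is the principal one-point extension $\langle A,(a_j)_{j<_0 k}\rangle_{\mathcal{B}}\oplus_{\bigvee B_k}a_k$, so by Crapo's theorem (Theorem~\ref{one_point_2}) the principal cut generated by $\bigvee B_k$ in $\langle A,(a_j)_{j<_0 k}\rangle_{\mathcal{B}}$ coincides with $\{y\in\langle A,(a_j)_{j<_0 k}\rangle_{\mathcal{B}} : a_k\leqslant y\}$; since $a_k\leqslant x$ this gives $\bigvee B_k\leqslant x$, and as $B_k$ consists of at least two distinct points we have $r(\bigvee B_k)=2=r(x)$, whence $\bigvee B_k=x$. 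In particular an old line through $a_k$ is unique (two of them would meet in $a_k$, of rank $1<2=r(\bigvee B_k)$), and $B_k$ is a two-element subset of its points.

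Granting this, for each line $x\in B-A$ let $m(x)$ be the $<_0$-least index with $a_{m(x)}\in P(x)-A$ (it exists because $(a_i,B_i)_{i\in(I,<_0)}$ is a witness, by requirement a) in Definition~\ref{def_of_strong}), and, when $P(x)\cap A=\emptyset$, also let $m'(x)$ be the $<_0$-second least such index. For any further index $k$ with $a_k\in P(x)-A$ we have $k>_0 m(x)$ (and $>_0 m'(x)$ when $P(x)\cap A=\emptyset$), so $x$ is already an old line through $a_k$ at $<_0$-stage $k$; by the rigidity fact $\bigvee B_k=x$, hence $|B_k|=2$ (a $3$-element $B_k$ is, by clause (ii) in the definition of a canonical witness, an independent subset of $P(\mathcal{A})$ and so joins to the top, not to a line), and clauses (iv)--(v) there give $B_k=\{a,a_{m(x)}\}$ if $P(x)\cap A=\{a\}$, respectively $B_k=\{a_{m(x)},a_{m'(x)}\}$. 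By the $<_1$-coherence hypothesis $m(x)<_1 k$ (and $m'(x)<_1 k$), so $m(x)$ is also the $<_1$-least index with $a_{m(x)}\in P(x)-A$ (resp.\ $m(x),m'(x)$ are the two $<_1$-least such). Hence $x$ is created at $<_1$-stage $m(x)$ (resp.\ at the $<_1$-larger of $m(x),m'(x)$), which together with the trivial case of a point $a_i\in B-A$ gives hypothesis (ii) of Lemma~\ref{change_order_spec} for $<_1$; so $(a_i,B_i)_{i\in(I,<_1)}$ is a witness. The same analysis re-establishes clauses (iv)--(v) for $<_1$: if $\bigvee B_i=l$ is a line, then $i\neq m(l)$ (and $i\neq m'(l)$ when $P(l)\cap A=\emptyset$), since, e.g., $\bigvee B_{m(l)}\neq l$ as $B_{m(l)}$ has at most one point of $P(l)$ available to it; thus $B_i$ equals the canonical pair of $l$, and $m(l)$ (resp.\ $m(l),m'(l)$) are $<_1$-least by the above, as required.

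The step I expect to be the real obstacle is the verification of hypothesis (ii) of Lemma~\ref{change_order_spec} for $<_1$: a priori a coherent reordering could disperse the points of a line so that it is never created at a single stage, which is precisely the pathology that requirement a) of Definition~\ref{def_of_strong} and the very notion of canonical witness are designed to exclude (cf.\ Remark~\ref{technical_req}). The rigidity fact is what controls this, by forcing, for a point lying on an already existing line $x$, that $\bigvee B_k=x$ and hence that $B_k$ is the canonical pair of $x$; everything after that is bookkeeping.
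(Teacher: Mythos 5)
Your proposal is correct and follows essentially the same route as the paper: both reduce the statement to Lemma~\ref{change_order_spec} by verifying its condition ii) for $<_1$, using the canonicity clauses iv)--v) of the $<_0$-witness together with the coherence hypothesis on $<_1$ to pin down where each line of $B-A$ is created, and both then note that canonicity for $<_1$ follows from the same analysis. Your ``rigidity fact'' (that a point added on an already existing line must have $\bigvee B_k$ equal to that line, forcing $B_k$ to be the canonical pair) is just a more explicit formulation of the mechanism the paper uses implicitly in its case distinction.
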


\begin{proof} We show that in this case condition ii) of Lemma \ref{change_order_spec} is automatically satisfied, and thus by that lemma $(a_i, B_i)_{i \in (I, <_1)}$ is a witness for $\mathcal{A} \preccurlyeq^* \mathcal{B}$. The canonicity of $(a_i, B_i)_{i \in (I, <_1)}$ is established with similar arguments. Let $x \in B - A$ be a line of $\mathcal{B}$ and $j \in I$ such that $x \in \langle A, (a_i)_{i \leq_0 j}\rangle_{\mathcal{B}} - \langle A, (a_i)_{i <_0 j}\rangle_{\mathcal{B}}$.
\newline {\bf Case a.} $x = a \vee a_j$, for $a \in P(\mathcal{A})$.
We want to show that $x \in \langle A, (a_i)_{i \leq_1 j} \rangle_{\mathcal{B}} - \langle A, (a_i)_{i <_1 j}\rangle_{\mathcal{B}}$. Of course $x \in \langle A, (a_i)_{i \leq_1 j} \rangle_{\mathcal{B}}$ because 
$$(a \vee a_j)^{\langle A, (a_i)_{i \leq_1 j} \rangle_{\mathcal{B}}} = (a \vee a_j)^{\mathcal{B}} = (a \vee a_j)^{\langle A, (a_i)_{i \leq_0 j} \rangle_{\mathcal{B}}}.$$ Suppose that $x \in \langle A, (a_i)_{i <_1 j}\rangle_{\mathcal{B}}$, then we can find $t <_1 j$ such that $x = a \vee a_t$. Of course $j <_0 t$, otherwise $x \in \langle A, (a_i)_{i <_0 j}\rangle_{\mathcal{B}}$. Thus, $B_t = \left\{ a, a_j \right\}$ because $(a_i, B_i)_{i \in (I, <_0)}$ is canonical. But then $B_t \not\subseteq P(\mathcal{A}) \cup \left\{ a_{i} \, | \, i <_1 t \right\}$, a contradiction.
\newline {\bf Case b.} $x = a_i \vee a_j$, for $i <_0 j$. Let $k = min\left\{ i, j \right\}$ and $m = max\left\{ i, j \right\}$ with respect to $<_1$. Arguing as above, one sees that $x \in \langle A, (a_i)_{i \leq_1 m} \rangle_{\mathcal{B}}$. Suppose that $x \in \langle A, (a_i)_{i <_1 m}\rangle_{\mathcal{B}}$, then we can find $t <_1 m$ such that $x = a_k \vee a_t$. Of course $j <_0 t$, otherwise $x \in \langle A, (a_i)_{i <_0 j}\rangle_{\mathcal{B}}$. Thus, $B_t = \left\{ a_i, a_j \right\} = \left\{ a_k, a_m \right\}$ because $(a_i, B_i)_{i \in (I, <_0)}$ is canonical. But then $B_t \not\subseteq P(\mathcal{A}) \cup \left\{ a_{i} \, | \, i <_1 t \right\}$, a contradiction.
\end{proof}

	We aim at proving an analog of Lemma \ref{closed_implies_strong_subgeo}.

\begin{definition} Let $(a_i, B_i)_{i \in (I, <)}$ be a witness for $\mathcal{A} \preccurlyeq^* \mathcal{B}$ and $X \subseteq I$. We say that $X$ is closed if for every $j \in X$, $B_{j} \subseteq P(\mathcal{A}) \cup \left\{ a_{i} \, | \, i < j \text{ and } i \in X \right\}$. 
\end{definition}

\begin{lemma}\label{Konig_spec} Let $(a_i, B_i)_{i \in (I, <)}$ be a witness for $\mathcal{A} \preccurlyeq^* \mathcal{B}$ and $X' \subseteq I$. Then there exists closed $X' \subseteq X \subseteq I$ such that $|X| \leq |X'| + \omega$.
\end{lemma}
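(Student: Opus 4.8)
The plan is to imitate the proof of Lemma~\ref{Konig}, replacing the finite König argument by an $\omega$-length iteration of a one-step closure operation on the index set. The one genuine difference from the well-founded case is that $(I,<)$ need not be well-founded, so we cannot keep $X$ finite even when $X'$ is finite, and a countable bound is the best one can hope for.

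First I would define, for $Y \subseteq I$,
\[
\mathrm{cl}(Y) \;=\; Y \,\cup\, \left\{ i \in I \, : \, a_i \in B_j \text{ for some } j \in Y \right\}.
\]
Since $(a_i, B_i)_{i \in (I,<)}$ is a witness for $\mathcal{A} \preccurlyeq^* \mathcal{B}$, for each $j \in I$ we have $B_j \in \mathcal{P}_{2/n}(P(\mathcal{A}) \cup \left\{ a_i \, | \, i < j \right\})$; in particular $|B_j| \leq n$, and every element of $B_j$ that is not a point of $\mathcal{A}$ is of the form $a_i$ with $i < j$ (here one uses that distinct indices name distinct points of $P(\mathcal{B}) - P(\mathcal{A})$, so $a_i \in B_j$ forces that very index $i$ to be $< j$). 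Hence $|\mathrm{cl}(Y) \setminus Y| \leq n \cdot |Y|$, so $|\mathrm{cl}(Y)| \leq |Y| + \omega$, and moreover every index $i$ added to $\mathrm{cl}(Y)$ satisfies $i < j$ for its witnessing $j \in Y$.

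Next I would put $X_0 = X'$, $X_{m+1} = \mathrm{cl}(X_m)$ for $m < \omega$, and $X = \bigcup_{m < \omega} X_m$. An immediate induction on $m$ gives $|X_m| \leq |X'| + \omega$ for every $m < \omega$, whence $|X| \leq \omega \cdot (|X'| + \omega) = |X'| + \omega$; and trivially $X' = X_0 \subseteq X$. Finally $X$ is closed: given $j \in X$, pick $m$ with $j \in X_m$; then for each $a_i \in B_j$ we have $i \in X_{m+1} \subseteq X$, and $i < j$ by the observation above, so $B_j \subseteq P(\mathcal{A}) \cup \left\{ a_i \, | \, i < j \text{ and } i \in X \right\}$, as required.

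I do not anticipate a real obstacle here; the argument is purely combinatorial, about index sets, and does not interact with the geometry. The only point worth recording is the contrast with Lemma~\ref{Konig}: since $(I,<)$ need not be well-founded, a single closure step (or a König-type argument) does not take a finite set to a finite set --- consider a construction indexed by a descending chain, say with $B_{-n} = \left\{ a_{-n-1}, a_{-n-2} \right\}$ as in Remark~\ref{technical_req} --- so the $\omega$-iteration, and correspondingly the weaker bound $|X| \leq |X'| + \omega$ rather than finiteness, cannot be avoided.
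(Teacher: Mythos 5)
Your proof is correct and follows the same route the paper intends: the paper's proof is just the remark ``similar to Lemma \ref{Konig}'' (a K\"onig-type closure argument), and your $\omega$-iteration of the finitely branching operation $Y \mapsto \mathrm{cl}(Y)$, with the resulting bound $|X| \leq |X'| + \omega$, is exactly that argument spelled out. Your closing observation about the loss of finiteness due to non-well-foundedness of $(I,<)$ correctly explains why the bound here is weaker than in Lemma \ref{Konig}.
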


\begin{proof} Similar to the proof of Lemma \ref{Konig}.
\end{proof}

	Let $(I, <)$ be a linear order and $X \subseteq I$. Let then $(X, <)$ and $(I - X, <)$ be the linear orders induced on $X$ and $I- X$ by $(I, <)$. We denote by $(I, <_X)$ the concatenation of the linear orders $(X, <)$ and $(I - X, <)$.

\begin{lemma}\label{closed_implies_strong_subgeo_spec} Let $(a_i, B_i)_{i \in (I, <)}$ be a canonical witness for $\mathcal{A} \preccurlyeq^* \mathcal{B}$ and $X \subseteq I$ closed. Then 
	$$\mathcal{A} \preccurlyeq^* \langle A, (a_{i})_{i \in (X, <) }\rangle_{\mathcal{B}} \preccurlyeq^* \mathcal{B}.$$
\end{lemma}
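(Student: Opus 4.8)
The plan is to reduce the infinitary statement to the finitary case (Lemma~\ref{change_order_spec} together with its canonical refinement, Corollary~\ref{change_order_for_canonical}) by exploiting König-type finite approximations, exactly as in the passage from Lemma~\ref{finitary_change_order} to Lemma~\ref{change_order} to Lemma~\ref{closed_implies_strong_subgeo}, but now with the linear (rather than well-founded) orders permitted by $\preccurlyeq^*$. Write $\mathcal{B}^X = \langle A, (a_i)_{i \in (X,<)} \rangle_{\mathcal{B}}$. I first want to check that $\mathcal{B}^X \in \mathbf{K}^3_*$ and that $(a_i, B_i)_{i \in (X,<)}$ is a canonical witness for $\mathcal{A} \preccurlyeq^* \mathcal{B}^X$; this uses that $X$ is closed, so each $B_j$ with $j \in X$ lies in $P(\mathcal{A}) \cup \{a_i : i \in X,\, i < j\}$, hence the principal-extension clauses of Definition~\ref{def_of_strong} are witnessed inside $\mathcal{B}^X$, and condition~a) of that definition for $\mathcal{B}^X$ is automatic since every $x \in B^X - A$ is a join of finitely many of the $a_i$ with $i \in X$. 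Canonicity transfers verbatim because the canonicity conditions only constrain the $B_i$'s and the partial geometries $\langle A,(a_i)_{i\le j}\rangle$, which are unchanged.

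Next I establish the right-hand inequality $\mathcal{B}^X \preccurlyeq^* \mathcal{B}$. Form the linear order $(I, <_X)$ — the concatenation putting $(X,<)$ below $(I-X,<)$. Since $X$ is closed, for every $j \in I$ we have $B_j \subseteq P(\mathcal{A}) \cup \{a_i : i <_X j\}$: if $j \in X$ this is closedness, and if $j \notin X$ then all of $X$ precedes $j$ in $<_X$ so the containment is trivial. Hence $(I, <_X)$ satisfies hypothesis~i) of Lemma~\ref{change_order_spec}, and by Corollary~\ref{change_order_for_canonical} $(a_i, B_i)_{i \in (I,<_X)}$ is again a canonical witness for $\mathcal{A} \preccurlyeq^* \mathcal{B}$. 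Now the initial segment of $(I, <_X)$ consisting of $X$ is downward closed, and — arguing as in Proposition~\ref{equi_car_of_strong}(ii), or directly from the fact that restricting a canonical witness to a downward-closed initial segment again yields a canonical witness of a subgeometry — this exhibits $\mathcal{B}^X$ as the limit of the $\preccurlyeq^*$-construction along $(X,<)$ sitting inside $\mathcal{B}$, with the rest of the construction (along $(I-X,<)$) showing $\mathcal{B}^X \preccurlyeq^* \mathcal{B}$. Formally one reads off a canonical witness $(a_i,B_i)_{i\in(I-X,<)}$ for $\mathcal{B}^X \preccurlyeq^* \mathcal{B}$ from the tail of $(I,<_X)$, after checking that for each $j \in I-X$, $B_j \subseteq P(\mathcal{B}^X) \cup \{a_i : i \in I-X,\, i < j\}$, which holds because $X$ is closed so $B_j \cap \{a_i : i \in X\}$ contributes only elements of $P(\mathcal{B}^X)$.

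Finally, the left-hand inequality $\mathcal{A} \preccurlyeq^* \mathcal{B}^X$ is exactly the statement verified in the first paragraph. This gives $\mathcal{A} \preccurlyeq^* \mathcal{B}^X \preccurlyeq^* \mathcal{B}$, as claimed. \textbf{The main obstacle} I anticipate is not the combinatorics of reindexing — that is handled cleanly once Lemma~\ref{change_order_spec} and Corollary~\ref{change_order_for_canonical} are in hand — but rather verifying condition~a) of Definition~\ref{def_of_strong} for the \emph{tail} witness $\mathcal{B}^X \preccurlyeq^* \mathcal{B}$: one must be sure that every line $x \in B - B^X$ first appears at some stage $j \in I - X$ under the new ordering, and this is where the $n=3$ restriction and the delicate interplay between condition~a) and canonicity (the point of Remark~\ref{technical_req}) genuinely enter. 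The safest route is to observe that condition~ii) of Lemma~\ref{change_order_spec} for $(I,<_X)$ was already confirmed via Corollary~\ref{change_order_for_canonical}, and then note that a line of $\mathcal{B}$ not already in $\mathcal{B}^X$ must, by that condition, first appear at a stage indexed in $I - X$, since all stages indexed in $X$ only generate lines of $\mathcal{B}^X$.
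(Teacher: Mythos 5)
Your argument is essentially the paper's proof: pass to the concatenated order $(I,<_X)$, check (using closedness of $X$) that it satisfies the hypothesis of Corollary~\ref{change_order_for_canonical}, conclude that $(a_i,B_i)_{i\in(I,<_X)}$ is again a canonical witness for $\mathcal{A}\preccurlyeq^*\mathcal{B}$, and then split this witness at the initial segment $X$ to read off both $\mathcal{A}\preccurlyeq^*\langle A,(a_i)_{i\in(X,<)}\rangle_{\mathcal{B}}$ and $\langle A,(a_i)_{i\in(X,<)}\rangle_{\mathcal{B}}\preccurlyeq^*\mathcal{B}$; this is exactly how the lemma is proved in the paper, and your treatment of condition~a) for the tail witness (a line outside $\mathcal{B}^X$ must first appear at a stage in $I-X$) is the right way to justify the splitting. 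One caveat: the justification in your first paragraph that condition~a) of Definition~\ref{def_of_strong} for $\mathcal{A}\preccurlyeq^*\mathcal{B}^X$ is ``automatic since every $x\in B^X-A$ is a finite join of the $a_i$'' is not valid as a general principle -- finiteness of joins does not produce a first stage of appearance when the index order is not well-founded, which is precisely the phenomenon behind Remark~\ref{technical_req}. The correct justification is the same minimality argument you use for the tail, applied to the reordered canonical witness: if $x\in B^X-A$ first appears (in the sense of condition~a) for $(I,<_X)$) at stage $j$, then $j$ must lie in the initial segment $X$, since $x$ already belongs to $\langle A,(a_i)_{i\leq_X m}\rangle_{\mathcal{B}}$ for some $m\in X$. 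With that repair the proposal coincides with the paper's argument.
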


\begin{proof} Obviously $(I, <_X)$ fulfils the condition of Corollary \ref{change_order_for_canonical}, so $(a_i, B_i)_{i \in (I, <_X)}$ is a witness for $\mathcal{A} \preccurlyeq^* \mathcal{B}$, from which it follows that $\mathcal{A} \preccurlyeq^* \langle A, (a_{i})_{i \in (X, <) }\rangle_{\mathcal{B}} \preccurlyeq^* \mathcal{B}$.
\end{proof}

	We now define the {\em free $\preccurlyeq^*$-amalgam} of $\mathcal{A}$ and $\mathcal{B}$ over $\mathcal{C}$, in symbols $\mathcal{A} \oplus^*_{\mathcal{C}} \mathcal{B}$. Given a plane $\mathcal{A}$ and   $a \in P(\mathcal{A})$, we denote by $L(a) = L_{\mathcal{A}}(a)$ the set of lines of $\mathcal{A}$ incident with the point $a$, and with $L(\mathcal{A})$ the set of lines of $\mathcal{A}$.	
	
	\begin{definition/proposition}\label{free_amalgamation_spec} Let $\mathcal{A}, \mathcal{B}, \mathcal{C} \in \mathbf{K}_{*}^3$, with $\mathcal{C} \preccurlyeq^* \mathcal{A}, \mathcal{B}$ and $A \cap B = C$. Let $L^* = \left\{ a \vee b \, | \, a \in P(\mathcal{A}) - P(\mathcal{C}), b \in P(\mathcal{B}) - P(\mathcal{C}), L_{\mathcal{A}}(a) \cap L_{\mathcal{B}}(b) = \emptyset \right\}$, and define $\mathcal{A} \oplus_{\mathcal{C}}^{*} \mathcal{B} = \mathcal{D}$ as:
	\begin{enumerate}[i)]
	\item $P(\mathcal{D}) = P(\mathcal{A}) \cup P(\mathcal{B})$;
	\item $L(\mathcal{D}) = L(\mathcal{A}) \cup L(\mathcal{B}) \cup L^*$;
	\item $p \leq l$ iff $p, l \in A$ and $p \leq l$, or $p, l \in B$ and $p \leq l$, or $l = a \vee b \in L^*$ and $p = a$ or $p=b$.
	\end{enumerate}
	Then $\mathcal{A} \oplus_{\mathcal{C}}^{*} \mathcal{B}$ is a plane, $\mathcal{A}, \mathcal{B} \preccurlyeq^* \mathcal{A} \oplus_{\mathcal{C}}^{*} \mathcal{B}$ and $\mathcal{A} \oplus_{\mathcal{C}}^{*} \mathcal{B} \in \mathbf{K}_{*}^3$.
\end{definition/proposition}

	\begin{proof} Of course in $\mathcal{A} \oplus_{\mathcal{C}}^{*} \mathcal{B} = \mathcal{D}$ every element is a sup of points. To conclude that $\mathcal{D}$ is a plane it suffices to verify that for every $a, b \in D$ we have that $r(a \vee b) + r(a \wedge b) \leq r(a) + r(b)$, but this is easily seen with a standard case distinction. Furthermore, the linear order witnessing that $\mathcal{C} \preccurlyeq^* \mathcal{B}$ witnesses that $\mathcal{A} \preccurlyeq^* \mathcal{D}$, and the linear order witnessing that $\mathcal{C} \preccurlyeq^* \mathcal{A}$ witnesses that $\mathcal{B} \preccurlyeq^* \mathcal{D}$. Finally, $\mathcal{D} \in \mathbf{K}_{*}^3$ because there exists $\mathcal{E} \cong FG_3$ such that $\mathcal{E} \preccurlyeq^* \mathcal{C} \preccurlyeq^* \mathcal{A} \preccurlyeq^* \mathcal{D}$.
\end{proof}

	\begin{theorem}\label{*_AEC_but_cohe} $(\mathbf{K}_{*}^{3}, \preccurlyeq^*)$ satisfies all the $\mathrm{AEC}$ axioms except possibly the Coherence Axiom and the Smoothness Axiom. Furthermore it has $\mathrm{AP}$, $\mathrm{JEP}$ and $\mathrm{ALM}$.
\end{theorem}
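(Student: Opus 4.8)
The plan is to run through the axioms of Definition~\ref{def_indep_first_order} (omitting Coherence~(5) and Smoothness~(4.3), which the statement explicitly puts aside), and then to dispatch $\mathrm{AP}$, $\mathrm{JEP}$ and $\mathrm{ALM}$, leaning on Lemmas~\ref{change_order_spec}, \ref{Konig_spec}, \ref{closed_implies_strong_subgeo_spec}, Corollary~\ref{change_order_for_canonical}, Proposition~\ref{canonical_determines_str} and Definition/Proposition~\ref{free_amalgamation_spec}; most of the real work has already been done in those results, so what remains is mainly assembly. Axiom~(1) is immediate, and axiom~(2) is built into Definition~\ref{def_of_strong}, since $\mathcal{A}\preccurlyeq^*\mathcal{B}$ presupposes that $\mathcal{A}$ is a $\wedge$-subgeometry, i.e.\ an $L$-submodel, of $\mathcal{B}$. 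For the partial-order axiom~(3): reflexivity uses the empty witness, antisymmetry follows because $\preccurlyeq^*$ refines $\leq$, and transitivity is by concatenation of witnesses — the point being that condition~a) of Definition~\ref{def_of_strong} for a witness $(a_i,B_i)_{i\in(I,<)}$ of $\mathcal{A}\preccurlyeq^*\mathcal{B}$ forces $\langle A,(a_i)_{i\in I}\rangle_{\mathcal{B}}=\mathcal{B}$, so that if $(b_k,D_k)_{k\in(K,<)}$ witnesses $\mathcal{B}\preccurlyeq^*\mathcal{C}$ then the linear order $(I,<)$ followed by $(K,<)$ again satisfies a) (each new element of $\mathcal{C}$ shows up at some stage of one of the two blocks, joins being absolute in subgeometries) and b) (for an index from the $K$-block the relevant initial-segment subgeometry of $\mathcal{C}$ is literally the one occurring in the $\mathcal{B}\preccurlyeq^*\mathcal{C}$ witness, where the one-point extension is principal by hypothesis).

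For the chain axioms (4.1)--(4.2): an increasing continuous $\preccurlyeq^*$-chain $(\mathcal{A}_i)_{i<\delta}$ has a union that is again a plane (a routine direct-limit check). Concatenating the witnesses for $\mathcal{A}_i\preccurlyeq^*\mathcal{A}_{i+1}$ in order, and using continuity to see that the subgeometry generated by $\mathcal{A}_0$ together with all points added before any limit stage $\lambda$ equals $\mathcal{A}_\lambda$, one verifies exactly as in transitivity that the concatenated linear order witnesses $\mathcal{A}_0\preccurlyeq^*\bigcup_{i<\delta}\mathcal{A}_i$ — for a) the least stage at which a given element appears is always a successor or $0$, by continuity at limits — and starting the concatenation at $\mathcal{A}_j$ instead yields (4.2). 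That $\bigcup_{i<\delta}\mathcal{A}_i\in\mathbf{K}_*^3$ then follows by transitivity from $FG_3\preccurlyeq^*\mathcal{A}_0\preccurlyeq^*\bigcup_{i<\delta}\mathcal{A}_i$. For axiom~(6) I would show $\mathrm{LS}(\mathbf{K}_*^3,\preccurlyeq^*)=\omega$, just as for $(\mathbf{K}^3,\preccurlyeq)$ in Theorem~\ref{our_class_is_almost_AEC}: given $\mathcal{A}\in\mathbf{K}_*^3$ and $B\subseteq A$, fix $\mathcal{E}\cong FG_3$ with $\mathcal{E}\preccurlyeq^*\mathcal{A}$ and a \emph{canonical} witness $(a_i,B_i)_{i\in(I,<)}$ for $\mathcal{E}\preccurlyeq^*\mathcal{A}$, choose $X'\subseteq I$ with $|X'|\leq|B|+\omega$ and $B\subseteq\langle E,(a_i)_{i\in X'}\rangle_{\mathcal{A}}$ (possible since every element of a plane is a join of points), enlarge to a closed $X\supseteq X'$ with $|X|\leq|B|+\omega$ by Lemma~\ref{Konig_spec}, and put $\mathcal{C}=\langle E,(a_i)_{i\in(X,<)}\rangle_{\mathcal{A}}$; Lemma~\ref{closed_implies_strong_subgeo_spec} (where canonicity is essential) gives $\mathcal{E}\preccurlyeq^*\mathcal{C}\preccurlyeq^*\mathcal{A}$, so $\mathcal{C}\in\mathbf{K}_*^3$, $B\subseteq C$, and $|C|\leq|X|+|X|^2+\omega=|B|+\omega$.

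Finally, $\mathrm{AP}$ is exactly Definition/Proposition~\ref{free_amalgamation_spec}: given $\mathcal{C}\preccurlyeq^*\mathcal{A},\mathcal{B}$, reduce by an isomorphism to the case $A\cap B=C$ and take $\mathcal{A}\oplus_{\mathcal{C}}^{*}\mathcal{B}\in\mathbf{K}_*^3$, into which $\mathcal{A}$ and $\mathcal{B}$ $\preccurlyeq^*$-embed via the inclusions, which agree on $C$. Then $\mathrm{JEP}$ follows because any two members of $\mathbf{K}_*^3$ contain $\preccurlyeq^*$-copies of $FG_3$, which we identify and then amalgamate; and $\mathrm{ALM}$ holds because for each $\lambda\geq\omega$ a sequence of $\lambda$ principal one-point extensions of $FG_3$ under the top element produces a member of $\mathbf{K}^3\subseteq\mathbf{K}_*^3$ of cardinality $\lambda$. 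I expect the only genuine content to be the bookkeeping in the concatenation arguments for (3), (4.1) and (4.2) — keeping careful track of condition~a) of Definition~\ref{def_of_strong} and of the subgeometry in which each join $\bigvee B_i$ is to be computed — together with the cardinality estimate in (6), which uses that the subgeometry of a plane generated by $\kappa$ points has size at most $\kappa$ for $\kappa$ infinite (its elements being $0$, $1$, the generators, and pairwise joins).
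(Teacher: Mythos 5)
Your proof is correct and follows essentially the same route as the paper's (much terser) argument: the paper likewise dismisses (1)--(3), (4.1), (4.2) as routine, obtains $\mathrm{AP}$ from the free $\preccurlyeq^*$-amalgam of Definition/Proposition~\ref{free_amalgamation_spec}, gets $\mathrm{JEP}$ from the $\preccurlyeq^*$-embedded copies of $FG_3$, and proves $\mathrm{LS}=\omega$ exactly as you do, via Lemma~\ref{Konig_spec} and Lemma~\ref{closed_implies_strong_subgeo_spec} applied to a canonical witness as in Theorem~\ref{our_class_is_almost_AEC}. Your concatenation bookkeeping for transitivity and the chain axioms just fills in the verifications the paper leaves implicit.
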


\begin{proof} (1), (2), (3), (4.1) and (4.2) are OK. Regarding $\mathrm{ALM}$, $\mathrm{JEP}$ and $\mathrm{AP}$, the first is obviously satisfied, the second follows from the third because $FG_3$ $\preccurlyeq^*$-embeds in every structure in $\mathbf{K}^{3}_*$, and the third is taken care of by the free $\preccurlyeq^*$-amalgam $\mathcal{A} \oplus^*_{\mathcal{C}} \mathcal{B}$. Item (6) is as in Theorem \ref{our_class_is_almost_AEC}, where in this case we use Lemma \ref{Konig_spec} and Lemma \ref{closed_implies_strong_subgeo_spec}, of course. 
\end{proof}

	Unfortunately, we were not able to determine if $(\mathbf{K}_{*}^{3}, \preccurlyeq^*)$ satisfies the Coherence Axiom, and we were able to prove only a weak form of the Smoothness Axiom.
	
	\begin{lemma}\label{ultraproduct_lemma} Let $(\mathcal{A}_j)_{j < \delta}$ be a continuous increasing $\preccurlyeq^*$-chain in $\mathbf{K}_{*}^{3}$ and suppose that $\mathcal{A}_j \preccurlyeq^* \mathcal{B} \in \mathbf{K}_{*}^{3}$, then there exists $\mathcal{C} \in \mathbf{K}_{*}^{3}$ so that $\bigcup_{i < \delta} \mathcal{A}_i = \mathcal{A} \preccurlyeq^* \mathcal{C}$ and $\mathcal{B} \preccurlyeq^* \mathcal{C}$. 
\end{lemma}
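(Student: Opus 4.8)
The plan is to take for $\mathcal{C}$ an ultrapower of $\mathcal{B}$ and to check that both $\mathcal{A}$ and $\mathcal{B}$ are $\preccurlyeq^*$-strong in it. First the set-up. Since $\mathcal{A}_j\preccurlyeq^*\mathcal{B}$ entails $\mathcal{A}_j\leq\mathcal{B}$, the union $\mathcal{A}=\bigcup_{j<\delta}\mathcal{A}_j$ is a submodel of $\mathcal{B}$; as any two lines of $\mathcal{A}$ already lie in some $\mathcal{A}_j$ and their meet is computed there, $\mathcal{A}\leq\mathcal{B}$, and $\mathcal{A}\in\mathbf{K}^3_*$ because $FG_3\preccurlyeq^*\mathcal{A}_0\preccurlyeq^*\mathcal{A}$ (the concatenation of a coherent system of witnesses along the $\preccurlyeq^*$-chain is, by the characterisation in Proposition \ref{equi_car_of_strong}(ii), a witness for $\mathcal{A}_0\preccurlyeq^*\mathcal{A}$). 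If $\delta$ is a successor then $\mathcal{A}$ is its last term and $\mathcal{C}=\mathcal{B}$ works; so assume $\delta$ is limit, fix an ultrafilter $U$ on $\delta$ containing all final segments, and put $\mathcal{C}:=\mathcal{B}^{\delta}/U$ with diagonal embedding $d:\mathcal{B}\hookrightarrow\mathcal{C}$. The class $\mathbf{K}^3_0$ is first-order axiomatizable (Proposition \ref{equiv_def_geom_lat}), hence closed under ultrapowers, so $\mathcal{C}\in\mathbf{K}^3_0$; identifying $\mathcal{B}$ with $d(\mathcal{B})$ and $\mathcal{A}_j,\mathcal{A}$ with their diagonal copies we have $\mathcal{A}\subseteq\mathcal{B}\subseteq\mathcal{C}$ and, by \L o\'s's theorem, $\mathcal{A}\leq\mathcal{B}\leq\mathcal{C}$.

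Second, I would prove $\mathcal{B}\preccurlyeq^*\mathcal{C}$ by building an explicit $\preccurlyeq^*$-witness over $\mathcal{B}$. By \L o\'s's theorem a point $[g]\in P(\mathcal{C})\setminus P(\mathcal{B})$ lies on the line $d(l)$ of $\mathcal{B}$ exactly when $\{j:g(j)\in P_{\mathcal{B}}(l)\}\in U$, and it lies on at most one line of $\mathcal{B}$ (distinct lines of the plane $\mathcal{C}$ meet in at most one point, and $d(l_1)\wedge d(l_2)=d(l_1\wedge l_2)$ lies in $\mathcal{B}$). So one may enumerate $P(\mathcal{C})\setminus P(\mathcal{B})$ by first adding, line by line over the lines $l$ of $\mathcal{B}$, the new points on $d(l)$ as principal one-point extensions under $d(l)$, and then adding the remaining new points (those on no line of $\mathcal{B}$) as generic points; since every new line of $\mathcal{C}$ is a sup of two points it appears for the first time exactly when the second of them is added, so clause (a) of Definition \ref{def_of_strong}(ii) is satisfied, and a case check — using once more that distinct lines meet in $\leq 1$ point, together with the fact that $\mathcal{B}\in\mathbf{K}^3_*$ so that the ultrapower produces no configuration over $\mathcal{B}$ forcing a non-principal extension — shows every stage is principal. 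Hence $\mathcal{B}\preccurlyeq^*\mathcal{C}$, and since $FG_3\preccurlyeq^*\mathcal{B}$, transitivity of $\preccurlyeq^*$ gives $FG_3\preccurlyeq^*\mathcal{C}$, i.e. $\mathcal{C}\in\mathbf{K}^3_*$.

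Third — and this is the heart of the matter — I would prove $\mathcal{A}\preccurlyeq^*\mathcal{C}$. For each $j<\delta$ fix a canonical witness $w_j$ for $\mathcal{A}_j\preccurlyeq^*\mathcal{B}$, chosen coherently along the chain (a chain-witness for $\mathcal{A}_i\preccurlyeq^*\mathcal{A}_j$ concatenated with $w_j$ is a witness for $\mathcal{A}_i\preccurlyeq^*\mathcal{B}$; normalise to canonical form using Corollary \ref{change_order_for_canonical} and Proposition \ref{canonical_determines_str}), and compose with the witness for $\mathcal{B}\preccurlyeq^*\mathcal{C}$ from the previous step to get a coherent family $W_j$ of witnesses for $\mathcal{A}_j\preccurlyeq^*\mathcal{C}$. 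The point of passing to an ultrapower is that $U$ supplies a canonical ``limit'' of the $W_j$: running the constructions $w_j$ simultaneously in the coordinates of $\mathcal{C}=\mathcal{B}^{\delta}/U$ and passing to the $U$-limit yields one linear order on $P(\mathcal{C})\setminus P(\mathcal{A})=\bigcap_{j<\delta}\big(P(\mathcal{C})\setminus P(\mathcal{A}_j)\big)$ together with a choice of finite bases which, by \L o\'s's theorem, is a legitimate construction in $U$-almost every coordinate; clauses (a) and (b) of Definition \ref{def_of_strong}(ii) then hold in $\mathcal{C}$, so $\mathcal{A}\preccurlyeq^*\mathcal{C}$. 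The obstacle I expect to be the hard one is precisely assembling this parallel construction into a single linear order and base assignment that is correct in $U$-almost every coordinate at once; this rests on the explicit description of principal one-point extensions of planes — the same case analysis that underlies Lemma \ref{change_order_spec}, Corollary \ref{change_order_for_canonical} and Proposition \ref{canonical_determines_str} — and is the reason the argument is carried out for $n=3$.

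Putting the three parts together, $\mathcal{C}\in\mathbf{K}^3_*$ satisfies $\bigcup_{i<\delta}\mathcal{A}_i=\mathcal{A}\preccurlyeq^*\mathcal{C}$ and $\mathcal{B}\preccurlyeq^*\mathcal{C}$, as required.
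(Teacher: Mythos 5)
Your overall strategy (an ultrapower of $\mathcal{B}$ along an ultrafilter concentrating on end segments of $\delta$) is the same as the paper's, but the two steps that carry all the content are not actually carried out, and one of them is argued incorrectly. For $\mathcal{B}\preccurlyeq^*\mathcal{C}$, your two-phase enumeration (first the new points lying on lines $d(l)$, $l\in L(\mathcal{B})$, then the rest ``as generic points'') does not reproduce $\mathcal{C}$: a new point of the ultrapower that lies on no line of $\mathcal{B}$ can perfectly well be incident with a line of $\mathcal{C}$ spanned by a $\mathcal{B}$-point and a previously added new point, or by two new points, so it is not generic over the part already built; and your appeal to ``a case check\dots shows every stage is principal'' together with ``$\mathcal{B}\in\mathbf{K}^3_*$ so the ultrapower produces no configuration forcing a non-principal extension'' is an assertion of exactly the statement to be proved, not an argument. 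For $\mathcal{A}\preccurlyeq^*\mathcal{C}$ you only describe a plan (a ``$U$-limit'' of coherently chosen witnesses $W_j$) and you yourself flag that assembling a single linear order and base assignment valid in $U$-almost every coordinate is the hard step; that step is the lemma, and it is left open.

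The paper's device that closes both gaps at once is to make the witness itself first-order: fix $\mathcal{D}\cong FG_3$ with $\mathcal{D}\preccurlyeq^*\mathcal{A}_0$, and for each $\alpha<\kappa$ (after replacing $\delta$ by its cofinality $\kappa$) choose a canonical witness $(a_i,B^\alpha_i)_{i\in(J,<_\alpha)}$ for $\mathcal{D}\preccurlyeq^*\mathcal{B}$ rearranged (via Lemma \ref{change_order_spec}/Corollary \ref{change_order_for_canonical}) so that $P(\mathcal{A}_\alpha)-P(\mathcal{D})$ is an initial segment of every later order $<_\beta$ and the base sets $B^\beta_a$ stabilize for $a\in P(\mathcal{A}_\alpha)$. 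One then expands $\mathcal{B}$ to $\mathcal{B}_\alpha$ by naming $\mathcal{D}$ and adding predicates for $<_\alpha$ and for membership in the $B^\alpha_a$, and takes $\mathcal{C}_0=\prod_\alpha\mathcal{B}_\alpha/U$ (and $\mathcal{C}_1=\prod_\alpha\mathcal{B}_0/U$ for the $\mathcal{B}$-side). Since ``being a canonical witness'' is expressible in the expanded language, \L o\'s transfers a canonical witness for $\eta(\mathcal{D})\preccurlyeq^*\mathcal{C}$ to the reduct $\mathcal{C}$; the stabilization conditions make $\eta(P(\mathcal{A}))$ closed in the $\mathcal{C}_0$-witness and $\eta(P(\mathcal{B}))$ closed in the $\mathcal{C}_1$-witness, so Lemma \ref{closed_implies_strong_subgeo_spec} yields $\eta(\mathcal{A})\preccurlyeq^*\mathcal{C}$ and $\eta(\mathcal{B})\preccurlyeq^*\mathcal{C}$, and $\mathcal{C}\in\mathbf{K}^3_*$ comes for free from $\eta(\mathcal{D})\preccurlyeq^*\mathcal{C}$. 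Without some version of this encoding (or an equally concrete substitute), your proposal does not establish either $\mathcal{B}\preccurlyeq^*\mathcal{C}$ or $\mathcal{A}\preccurlyeq^*\mathcal{C}$.
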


	\begin{proof} First of all we can assume that $\delta = \kappa$ is a regular cardinal (otherwise replace $\delta$ with its own cofinality). Let $\mathcal{D} \cong FG_3$ be such that $\mathcal{D} \preccurlyeq^* \mathcal{A}_0 \preccurlyeq^* \mathcal{B}$ (such $\mathcal{D}$ exists as $\mathcal{A}_0 \in \mathbf{K}^3_*$ and $\preccurlyeq$ is transitive), and for $\alpha < \kappa$ let $(a_i, B^{\alpha}_i)_{i \in (J, <_{\alpha})}$ so that it satisfies the following conditions:
\begin{enumerate}[a)]
\item $a_i = i$, for every $i \in J$ (notational convenience);
\item $(a_i, B^{\alpha}_i)_{i \in (J, <_{\alpha})}$ is a canonical witness for $\mathcal{D} \preccurlyeq^* \mathcal{B}$;
\item $P(\mathcal{A}_{\alpha}) - P(\mathcal{D})$ is an initial segment of $(J, <_{\beta})$, for every $\alpha \leq \beta < \kappa$;
\item for every $a \in P(\mathcal{A})$, if $a \in P(\mathcal{A}_{\alpha})$ then $B^{\beta}_a = B^{\alpha}_a$ for every $\alpha \leq \beta$.
\end{enumerate} 
For every $\alpha < \kappa$, using the witness $(a_i, B^{\alpha}_i)_{i \in (J, <_{\alpha})}$ we expand the structure $\mathcal{B}$ to a structure $\mathcal{B}_{\alpha}$ by naming $\mathcal{D} - \left\{ 0, 1 \right\}$, adding a binary predicate $<$ interpreted as $<_{\alpha}$, and adding a binary predicate $R$ whose intended meaning is that $R(b, a)$ iff $b \in B^{\alpha}_a$. Let $U$ be an ultrafilter on $\kappa$ generated by end segments of $\kappa$, and consider the following ultraproducts
	$$ \mathcal{C}_0 = \prod_{\alpha < \kappa} \mathcal{B}_{\alpha}/U \; \text{ and } \; \mathcal{C}_1 = \prod_{\alpha < \kappa} \mathcal{B}_0/U.$$
Let $\mathcal{C}$ be the reduct of $\mathcal{C}_0$ (or equivalently $\mathcal{C}_1$) to the signature $L = \left\{ 0, 1, \vee, \wedge \right\}$ and $\eta$ the elementary embedding of $\mathcal{B}$ into $\mathcal{C}$ through constant functions. We show that:
\begin{enumerate}[i)]
	\item $\mathcal{C} \in \mathbf{K}_{*}^{3}$;
	\item $\eta(\mathcal{A}) \preccurlyeq^* \mathcal{C}$;
	\item $\eta(\mathcal{B}) \preccurlyeq^* \mathcal{C}$.
\end{enumerate}
We show i). First of all notice that $\mathcal{C}$ is a plane because, as already noticed, by Proposition \ref{equiv_def_geom_lat} the class of geometric lattices of fixed rank $n$ is first-order axiomatizable. Let $<^* = <^{\mathcal{C}_0}$, then $<^*$ is a linear order on $P(\mathcal{C}) - P(\eta(\mathcal{D})) = I$. For every $a \in I$, let $B^*_a = \left\{ b \in I \, | \, \mathcal{C}_0 \models R(b, a) \right\}$. Then $(a, B^*_a)_{a \in (I, <^*)}$ is a canonical witness for $\mathcal{D} \cong \eta(\mathcal{D}) \preccurlyeq^* \mathcal{C}$. This is because being a canonical witness is a first-order property (with respect to the expanded signature).

\smallskip
\noindent
We show ii). We show that $\eta(P(\mathcal{A}))$ is closed in $(a, B^*_a)_{a \in (I, <^*)}$, by Lemma \ref{closed_implies_strong_subgeo_spec} this suffices. Let then $a \in \eta(P(\mathcal{A}))$, we have to show that $B^*_{a} \subseteq \eta(P(\mathcal{D})) \cup \left\{ b \in I \, | \, b <^* a \right\}$. Let $\alpha < \kappa$ be such that $a \in \eta(P(\mathcal{A}_{\alpha}))$, then for all $\beta \geq \alpha$ we have that $B^{\beta}_a = B^{\alpha}_a$. Furthermore,  $|B^{\alpha}_a| \leq 3$. Thus, $B^*_{a} = \left\{ \eta(b) \in C \, | \, b \in B^{\alpha}_a \right\}$. Also, for all $\beta \geq \alpha$, $B^{\beta}_a \subseteq P(\mathcal{D}) \cup \left\{ b \in P(\mathcal{B}) \, | \, b <_{\beta} a \right\}$, and so $B^*_{a} \subseteq \eta(P(\mathcal{D})) \cup \left\{ b \in I \, | \, b <^* a \right\}$.

\smallskip
\noindent
Finally, iii) is as in ii), where now one uses the canonical witness $(I, B^{+}_a)_{a \in (I, <^{+})}$, for $<^{+} = <^{\mathcal{C}_1}$, and chooses the $B^{+}_a$ in analogy with the $B^*_a$ of the proof of ii). Then $\eta(P(\mathcal{B}))$ is closed in $(I, B^{+}_a)_{a \in (I, <^{+})}$, and so we can argue as in ii).
\end{proof}

	We digress momentarily from the study of $\preccurlyeq^*$ introducing a new relation $\preccurlyeq^{**}$. This relation, although similar to $\preccurlyeq^*$, is easily seen to be coherent.

\begin{definition}\label{def_of_**} 
\begin{enumerate}[i)]
	\item For $\mathcal{A}, \mathcal{B} \in \mathbf{K}^{3}_{0}$ with $\mathcal{A} \leq \mathcal{B}$, we let $\mathcal{A} \preccurlyeq^{**} \mathcal{B}$ if and only if there exists a linear ordering $(a_i)_{i \in (I, <)}$ of $P(\mathcal{B}) - P(\mathcal{A})$ such that for every $j \in I$ we have that $\langle A, (a_i)_{i < j} \rangle_{\mathcal{B}} \leq \mathcal{B}$, and if $J \subseteq I$ is an infinite descending sequence, then $r(\bigvee_{j \in J} a_j) = 3$.
	\item The class $\mathbf{K}^{3}_{**}$ is the class
$$\left\{ \mathcal{A} \in \mathbf{K}^{3}_{0} \, | \, \, \exists \, \mathcal{B} \cong FG_3 \text{ such that } \mathcal{B} \preccurlyeq^{**} \mathcal{A} \right\}.$$
\end{enumerate}
\end{definition}

	\begin{remark}\label{prop_of^**} It can be shown that $(\mathbf{K}_{**}^{3}, \preccurlyeq^{**})$ satisfies all the $\mathrm{AEC}$ axioms except possibly the Smoothness Axiom. Furthermore it has $\mathrm{AP}$, $\mathrm{JEP}$ and $\mathrm{ALM}$.
\end{remark}

	Unfortunately, we were not able to determine if $(\mathbf{K}_{**}^{3}, \preccurlyeq^{**})$ satisfies the Smoothness Axiom. The failure in determining the coherence of $\preccurlyeq^*$, combined with the failure in determining the smoothness of $\preccurlyeq^{**}$, led us to loosen up our relation $\preccurlyeq^*$ to a new relation $\preccurlyeq^+$, which is both coherent and smooth, as we will see. In accordance to our modification of $\preccurlyeq^*$ we move from $\mathbf{K}_{*}^{3}$ to a new class $\mathbf{K}_{+}^{3}$.
	
	\begin{definition}\label{def_of_+} 
\begin{enumerate}[i)]
\item The class $\mathbf{K}_{+}^{3}$ is the class
	$$\left\{ \mathcal{A} \in \mathbf{K}^{3}_{0} \, | \, \text{ there exists } \mathcal{B}, \mathcal{C} \in \mathbf{K}^{3}_{*} \text{ with } FG_3 \cong \mathcal{C} \leq \mathcal{A} \text{ and } \mathcal{C}, \mathcal{A} \preccurlyeq^* \mathcal{B} \right\}.$$
\item For $\mathcal{A}, \mathcal{B} \in \mathbf{K}_{+}^{3}$, we let $\mathcal{A} \preccurlyeq^+ \mathcal{B}$ iff $\mathcal{A} \leq \mathcal{B}$ and there exists $\mathcal{C} \in \mathbf{K}^{3}_{*}$ such that $\mathcal{A}, \mathcal{B} \preccurlyeq^* \mathcal{C}$.
\end{enumerate}
\end{definition}

	\begin{remark} Notice that if $(\mathbf{K}_{*}^{3}, \preccurlyeq^*)$ satisfies the Coherence Axiom, then we have that $$(\mathbf{K}_{*}^{3}, \preccurlyeq^*) = (\mathbf{K}_{+}^{3}, \preccurlyeq^+).$$ 
Notice also that $\mathcal{A} \preccurlyeq^* \mathcal{B}$ implies $\mathcal{A} \preccurlyeq^+ \mathcal{B}$.
\end{remark}
	
	\begin{theorem}\label{+_is_AEC} $(\mathbf{K}_{+}^{3}, \preccurlyeq^+)$ is an $\mathrm{AEC}$ with $\mathrm{AP}$, $\mathrm{JEP}$ and $\mathrm{ALM}$.
\end{theorem}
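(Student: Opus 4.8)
The plan is to run everything through the free $\preccurlyeq^*$-amalgam (Definition/Proposition \ref{free_amalgamation_spec}), the good behaviour of unions of $\preccurlyeq^*$-chains (Theorem \ref{*_AEC_but_cohe}) and the ultraproduct construction of Lemma \ref{ultraproduct_lemma}. I first dispose of the cheap items. Axioms (1) and (2) are immediate, and in axiom (3) antisymmetry follows from $\preccurlyeq^+ \subseteq \leq$ while reflexivity holds because, for $\mathcal{A}\in\mathbf{K}_+^3$, the witnessing $\mathcal{B}\in\mathbf{K}_*^3$ with $\mathcal{A}\preccurlyeq^*\mathcal{B}$ also witnesses $\mathcal{A}\preccurlyeq^+\mathcal{A}$. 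One has $\mathbf{K}^3\subseteq\mathbf{K}_*^3\subseteq\mathbf{K}_+^3$ (for the latter, if $\mathcal{Y}\cong FG_3$ and $\mathcal{Y}\preccurlyeq^*\mathcal{X}$ then $\mathcal{Y}\leq\mathcal{X}$ and $\mathcal{Y},\mathcal{X}\preccurlyeq^*\mathcal{X}$), whence $\mathrm{ALM}$ is clear; $\mathrm{AP}$ follows from the free $\preccurlyeq^*$-amalgam: given $\mathcal{A}\preccurlyeq^+\mathcal{B}_0$ and $\mathcal{A}\preccurlyeq^+\mathcal{B}_1$, pick $\mathcal{D}_i\in\mathbf{K}_*^3$ with $\mathcal{A},\mathcal{B}_i\preccurlyeq^*\mathcal{D}_i$, form $\mathcal{E}=\mathcal{D}_0\oplus_{\mathcal{A}}^{*}\mathcal{D}_1$ (legitimate since $\mathcal{A}\preccurlyeq^*\mathcal{D}_0,\mathcal{D}_1$; it lies in $\mathbf{K}_*^3$ because $\mathcal{D}_0\preccurlyeq^*\mathcal{E}$) and note $\mathcal{B}_i\preccurlyeq^*\mathcal{D}_i\preccurlyeq^*\mathcal{E}$, so $\mathcal{B}_i\preccurlyeq^+\mathcal{E}$ over $\mathcal{A}$; and $\mathrm{JEP}$ follows from $\mathrm{AP}$ since a copy of $FG_3$ (the $\mathcal{C}$ supplied by the definition of $\mathbf{K}_+^3$) $\preccurlyeq^+$-embeds in every member of $\mathbf{K}_+^3$.

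Transitivity (the rest of (3)) and the Coherence Axiom (5) are the same maneuver: given $\mathcal{A}\preccurlyeq^+\mathcal{B}\preccurlyeq^+\mathcal{C}$, pick $\mathcal{D}_1,\mathcal{D}_2\in\mathbf{K}_*^3$ with $\mathcal{A},\mathcal{B}\preccurlyeq^*\mathcal{D}_1$ and $\mathcal{B},\mathcal{C}\preccurlyeq^*\mathcal{D}_2$, form $\mathcal{E}=\mathcal{D}_1\oplus_{\mathcal{B}}^{*}\mathcal{D}_2\in\mathbf{K}_*^3$, and conclude $\mathcal{A},\mathcal{C}\preccurlyeq^*\mathcal{E}$ together with $\mathcal{A}\leq\mathcal{C}$, i.e. $\mathcal{A}\preccurlyeq^+\mathcal{C}$; running the amalgamation over the top structure instead yields coherence (from $\mathcal{A},\mathcal{B}\preccurlyeq^+\mathcal{C}$ and $\mathcal{A}\leq\mathcal{B}$, amalgamate witnesses over $\mathcal{C}$ to get $\mathcal{E}\in\mathbf{K}_*^3$ with $\mathcal{A},\mathcal{B}\preccurlyeq^*\mathcal{E}$, hence $\mathcal{A}\preccurlyeq^+\mathcal{B}$). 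For the $\mathrm{LS}$-number (6) I claim $\mathrm{LS}(\mathbf{K}_+^3,\preccurlyeq^+)=\omega$: given $\mathcal{A}\in\mathbf{K}_+^3$ with witness $\mathcal{A}\preccurlyeq^*\mathcal{B}\in\mathbf{K}_*^3$ and $B\subseteq A$, one repeats the argument of Theorem \ref{our_class_is_almost_AEC}(6) --- a canonical witness for $FG_3\preccurlyeq^*\mathcal{B}$, Lemma \ref{Konig_spec} and Lemma \ref{closed_implies_strong_subgeo_spec} --- to carve out a piece $\mathcal{C}\leq\mathcal{A}$ of size $\leq|B|+\omega$ containing $B$ and the designated copy of $FG_3$, with $\mathcal{C},\mathcal{A}\preccurlyeq^*\mathcal{B}$, so that $\mathcal{C}\preccurlyeq^+\mathcal{A}$ and $\mathcal{C}\in\mathbf{K}_+^3$.

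The substance of the theorem is the chain axiom (4). Let $(\mathcal{A}_i)_{i<\delta}$ be an increasing continuous $\preccurlyeq^+$-chain ($\delta$ limit) and $\mathcal{A}=\bigcup_i\mathcal{A}_i$; one first checks $\mathcal{A}\in\mathbf{K}_0^3$ and $\mathcal{A}_i\leq\mathcal{A}_j\leq\mathcal{A}$ for $i\le j$. Starting from a $\mathbf{K}_*^3$-witness $\mathcal{F}_0$ of $\mathcal{A}_0\in\mathbf{K}_+^3$ (so $\mathcal{A}_0\preccurlyeq^*\mathcal{F}_0$ and the designated $FG_3\preccurlyeq^*\mathcal{F}_0$) I build an increasing $\preccurlyeq^*$-chain $(\mathcal{F}_i)_{i<\delta}$ in $\mathbf{K}_*^3$ with $\mathcal{A}_j\preccurlyeq^*\mathcal{F}_i$ for all $j\le i$: at a successor take $\mathcal{E}\in\mathbf{K}_*^3$ with $\mathcal{A}_i,\mathcal{A}_{i+1}\preccurlyeq^*\mathcal{E}$ (from $\mathcal{A}_i\preccurlyeq^+\mathcal{A}_{i+1}$) and set $\mathcal{F}_{i+1}=\mathcal{F}_i\oplus_{\mathcal{A}_i}^{*}\mathcal{E}$ (and in the Smoothness instance, where $\mathcal{A}_i\preccurlyeq^+\mathcal{B}$ for all $i$, one also keeps $\mathcal{B}\preccurlyeq^*\mathcal{F}_i$, starting $\mathcal{F}_0$ appropriately). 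The key step --- call it the gluing lemma --- is: if $(\mathcal{B}_i)_{i<\gamma}$ is an increasing continuous (possibly only $\leq$-)chain with each $\mathcal{B}_i\preccurlyeq^*\mathcal{M}$ for a single $\mathcal{M}\in\mathbf{K}_*^3$, then $\bigcup_i\mathcal{B}_i\preccurlyeq^*\mathcal{M}'$ for some $\mathcal{M}\preccurlyeq^*\mathcal{M}'\in\mathbf{K}_*^3$. Granting it, the limit stages $\lambda$ of the $\mathcal{F}$-construction are handled by applying it to $(\mathcal{A}_j)_{j<\lambda}$ with $\mathcal{M}=\bigcup_{j<\lambda}\mathcal{F}_j$ (set $\mathcal{F}_\lambda:=\mathcal{M}'$), and a final application to $(\mathcal{A}_i)_{i<\delta}$ with $\mathcal{M}=\bigcup_{i<\delta}\mathcal{F}_i$ produces $\mathcal{M}'\in\mathbf{K}_*^3$ with $\mathcal{A}\preccurlyeq^*\mathcal{M}'$, each $\mathcal{A}_j\preccurlyeq^*\mathcal{M}'$, $FG_3\preccurlyeq^*\mathcal{M}'$ and (Smoothness case) $\mathcal{B}\preccurlyeq^*\mathcal{M}'$; since all of these sit $\leq$-below $\mathcal{A}$ this gives $\mathcal{A}\in\mathbf{K}_+^3$ (4.1), $\mathcal{A}_j\preccurlyeq^+\mathcal{A}$ (4.2) and $\mathcal{A}\preccurlyeq^+\mathcal{B}$ (4.3).

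The main obstacle is the gluing lemma. It is essentially Lemma \ref{ultraproduct_lemma} except that $(\mathcal{B}_i)$ is now only a $\leq$-chain --- consecutive terms are merely $\preccurlyeq^+$-related and $\preccurlyeq^*$-coherence is open --- so the compatibility conditions used in the proof of Lemma \ref{ultraproduct_lemma} (that $P(\mathcal{B}_i)-P(FG_3)$ form nested initial, closed segments of canonical witnesses, there read off from $FG_3\preccurlyeq^*\mathcal{B}_i\preccurlyeq^*\mathcal{M}$) must be re-established from $\mathcal{B}_i\preccurlyeq^*\mathcal{M}$ alone. I expect to do this by the canonical-witness calculus: use Proposition \ref{canonical_determines_str}, the reordering results (Lemma \ref{change_order_spec}, Corollary \ref{change_order_for_canonical}) and Lemma \ref{closed_implies_strong_subgeo_spec} to analyse how the sets $B_k$ of a canonical witness of $\mathcal{B}_i\preccurlyeq^*\mathcal{M}$ interact with the $\leq$-subgeometries $\mathcal{B}_j$ for $j\ge i$ and to choose the family of witnesses coherently, after which the end-segment ultrafilter/ultrapower argument of Lemma \ref{ultraproduct_lemma} goes through and delivers $\mathcal{M}'$ with $\bigcup_i\mathcal{B}_i\preccurlyeq^*\mathcal{M}'$. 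Everything outside the gluing lemma is routine bookkeeping with the free $\preccurlyeq^*$-amalgam.
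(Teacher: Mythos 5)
Your treatment of the routine items coincides with the paper's: axioms (1)--(3), Coherence, $\mathrm{AP}$, $\mathrm{JEP}$, $\mathrm{ALM}$ are all handled by amalgamating the $\mathbf{K}_*^3$-witnesses with the free $\preccurlyeq^*$-amalgam, and the L\"owenheim--Skolem bound by the closed-set/canonical-witness argument (the paper is a bit more careful here, closing one set $B_0$ under \emph{two} canonical witnesses, for $\mathcal{D}\preccurlyeq^*\mathcal{B}$ and for $\mathcal{A}\preccurlyeq^*\mathcal{B}$, but your sketch is in the same spirit). The problem is axiom (4), where your argument rests entirely on the unproved ``gluing lemma'': if $(\mathcal{B}_i)_{i<\gamma}$ is an increasing continuous $\leq$-chain with every $\mathcal{B}_i\preccurlyeq^*\mathcal{M}\in\mathbf{K}_*^3$, then $\bigcup_i\mathcal{B}_i\preccurlyeq^*\mathcal{M}'$ for some $\mathcal{M}\preccurlyeq^*\mathcal{M}'$. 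This is a genuine strengthening of Lemma \ref{ultraproduct_lemma}, and the step you defer (``I expect to do this by the canonical-witness calculus'') is exactly where the difficulty lives. The proof of Lemma \ref{ultraproduct_lemma} needs, for each $\alpha$, a canonical witness of $\mathcal{D}\preccurlyeq^*\mathcal{M}$ in which $P(\mathcal{B}_\alpha)-P(\mathcal{D})$ is an initial closed segment and whose $B$-sets at points of $\mathcal{B}_\alpha$ are stable for all later indices (conditions (c) and (d) there); these are obtained by concatenating witnesses along the $\preccurlyeq^*$-links of the chain. With only $\leq$-links (equivalently, $\preccurlyeq^+$-links whose common $\preccurlyeq^*$-extensions need not sit inside $\mathcal{M}$), producing such a compatible family would require witnesses for $\mathcal{B}_i\preccurlyeq^*\mathcal{B}_{i+1}$, i.e. essentially the Coherence Axiom for $\preccurlyeq^*$, which the paper explicitly leaves open. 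So your reduction trades the theorem for a statement at least as hard as the obstacle the paper is built to avoid, and no proof of it is supplied.

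The paper's actual proof of (4) avoids gluing along a non-$\preccurlyeq^*$ chain altogether. Given the $\preccurlyeq^+$-chain $(\mathcal{A}_i)_{i<\delta}$ (and, for Smoothness, the common bound $\mathcal{B}$), it first builds $\mathcal{F}=\bigcup_i\mathcal{F}_i$ by free $\preccurlyeq^*$-amalgams over $\mathcal{B}$, and then, using the witnesses $\mathcal{C}_i'$ with $\mathcal{A}_i,\mathcal{A}_{i+1}\preccurlyeq^*\mathcal{C}_i'$ and carefully prescribed intersections ($C_i'\cap C_j'\cap F=A_{i+1}$), constructs two auxiliary $\preccurlyeq^*$-chains $(\mathcal{C}_i)$ and $(\mathcal{D}_i)$ in $\mathbf{K}_*^3$ such that $\bigcup_{i<j}\mathcal{A}_i\preccurlyeq^*\bigcup_{i<j}\mathcal{C}_i$ for all $j\leq\delta$ (witnessed directly by \emph{concatenating} the link witnesses, so no limit-stage gluing of the $\mathcal{A}_i$'s is ever needed) and $\mathcal{C}_j\preccurlyeq^*\bigcup_i\mathcal{D}_i\succcurlyeq^*\mathcal{B}$. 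Only then is Lemma \ref{ultraproduct_lemma} invoked, and only for the genuine $\preccurlyeq^*$-chain $(\mathcal{C}_i)$. To repair your argument you should either prove your gluing lemma (which does not follow by routine reordering/closed-set manipulations, for the reason above) or replace your limit-stage treatment by this interlacing construction.
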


	\begin{proof} (1) and (2) (from Definition \ref{def_indep_first_order}) are OK. We prove (3). Of course we only prove the transitivity of $\preccurlyeq^+$. Suppose that $\mathcal{A} \preccurlyeq^+ \mathcal{B} \preccurlyeq^+ \mathcal{C}$, then $\mathcal{A} \leq \mathcal{B}$, $\mathcal{B} \leq \mathcal{C}$ and there are $\mathcal{C}', \mathcal{C}''\in \mathbf{K}_*^3$ such that $\mathcal{A}, \mathcal{B} \preccurlyeq^* \mathcal{C}'$, $\mathcal{B}, \mathcal{C} \preccurlyeq^* \mathcal{C}''$ and $C' \cap C'' = B$ (w.o.l.g.). Thus, $\mathcal{A} \leq \mathcal{C}$ and $\mathcal{A}, \mathcal{C} \preccurlyeq^* \mathcal{C}' \oplus^*_{\mathcal{B}} \mathcal{C}'' \in \mathbf{K}_*^3$.
	We prove (5). Let $\mathcal{A} \leq \mathcal{B} \preccurlyeq^+ \mathcal{C} \in \mathbf{K}_{+}^{3}$ and $\mathcal{A} \preccurlyeq^+ \mathcal{C}$. Then we can find $\mathcal{C}', \mathcal{C}'' \in \mathbf{K}_*^3$ such that $\mathcal{B}, \mathcal{C} \preccurlyeq^* \mathcal{C}'$, $\mathcal{A}, \mathcal{C} \preccurlyeq^* \mathcal{C}''$ and $C' \cap C'' = C$. Thus, $\mathcal{A} \leq \mathcal{B}$ and $\mathcal{A}, \mathcal{B} \preccurlyeq^* \mathcal{C}' \oplus^*_{\mathcal{C}} \mathcal{C}'' \in \mathbf{K}_*^3$, and so $A \preccurlyeq^+ B$, as wanted.
	Regarding $\mathrm{ALM}$, $\mathrm{JEP}$ and $\mathrm{AP}$, the first is obviously satisfied, the second follows from the third because $FG_3$ $\preccurlyeq^+$-embeds in every structure in $\mathbf{K}^{3}_+$, and the third is taken care of by the free $\preccurlyeq^*$-amalgam $\mathcal{A} \oplus^*_{\mathcal{C}} \mathcal{B}$. Specifically, let $\mathcal{C} \preccurlyeq^+ \mathcal{A}, \mathcal{B}$, then we can find $\mathcal{C}', \mathcal{C}'' \in \mathbf{K}^{3}_*$ such that $\mathcal{C}, \mathcal{A} \preccurlyeq^* \mathcal{C}'$, $\mathcal{C}, \mathcal{B} \preccurlyeq^* \mathcal{C}''$ and $C' \cap C'' = C$.  Thus, $\mathcal{A}, \mathcal{B} \preccurlyeq^* \mathcal{C}' \oplus^*_{\mathcal{C}} \mathcal{C}'' \in \mathbf{K}_*^3$.

\smallskip
\noindent	
	We prove (4). First we prove (4.3). Let $(\mathcal{A}_i)_{i < \delta}$ be an increasing continuous $\preccurlyeq^+$-chain in $\mathbf{K}_+^3$ with $\mathcal{A}_j \preccurlyeq^+ \mathcal{B} \in \mathbf{K}_+^3$ and assume that $\delta$ is limit. First of all, we construct $\mathcal{F} = \bigcup_{i < \delta} \mathcal{F}_i \in \mathbf{K}_*^3$ such that $\mathcal{A}_j, \mathcal{B} \preccurlyeq^* \mathcal{F}$. To this extent, for $i< \delta$, let $\mathcal{C}_i'' \in \mathbf{K}_*^3$ be such that $\mathcal{A}_i, \mathcal{B} \preccurlyeq^* \mathcal{C}_i''$ and so that if $i < j < \delta$, then $C_i'' \cap C_j'' = B$. For $i \leq \delta$, let then
	\begin{enumerate}[i)]
	\item $i = 0$: $\mathcal{F}_i = \mathcal{C}_0''$,
	\item $i = j+1$: $\mathcal{F}_i = \mathcal{F}_j \oplus^*_{\mathcal{B}} \mathcal{C}_i''$,
	\item $i$ limit: $\mathcal{F}_i = \bigcup_{j < i} \mathcal{F}_j$.
\end{enumerate}
We now define $(\mathcal{C}_i)_{i < \delta}$ and $(\mathcal{D}_i)_{i < \delta}$ $\preccurlyeq^*$-chains in $\in \mathbf{K}_*^3$ such that 
\begin{enumerate}[1)]
\item $FG_3 \cong \mathcal{E} \leq \mathcal{A}_0$;
\item $\mathcal{E} \preccurlyeq^* \mathcal{C}_0$;
\item $\mathcal{B} \preccurlyeq^* \mathcal{D}_0$;
\item for every $j \leq \delta$, $\bigcup_{i < j} \mathcal{A}_i \preccurlyeq^* \bigcup_{i < j} \mathcal{C}_i$;
\item for every $j < \delta$, $\mathcal{C}_j \preccurlyeq^* \bigcup_{i < \delta} \mathcal{D}_i$.
\end{enumerate}
Notice that the construction of $(\mathcal{C}_i)_{i < \delta}$ and $(\mathcal{D}_i)_{i < \delta}$ as above establishes (4.3), because then by Lemma \ref{ultraproduct_lemma} we can find $\mathcal{C} \in \mathbf{K}_*^3$ such that
$$\bigcup_{i < \delta} \mathcal{A}_i \preccurlyeq^* \bigcup_{i < \delta} \mathcal{C}_i \preccurlyeq^* \mathcal{C} \; \text{ and } \; \mathcal{B} \preccurlyeq^* \bigcup_{i < \delta} \mathcal{D}_i \preccurlyeq^* \mathcal{C}.$$
We first define $(\mathcal{D}_i)_{i < \delta}$. To this extent, let $FG_3 \cong \mathcal{E} \preccurlyeq^* \mathcal{C}_{-1}' \in \mathbf{K}^3_*$ be such that $\mathcal{E} \leq \mathcal{A}_0$ and $\mathcal{A}_0 \preccurlyeq^* \mathcal{C}_{-1}'$, and, for $i< \delta$, let $\mathcal{C}_i' \in \mathbf{K}_*^3$ be such that $\mathcal{A}_i, \mathcal{A}_{i+1} \preccurlyeq^* \mathcal{C}_i'$ and so that if $-1 \leq i < j < \delta$, then $C_i' \cap C_j' \cap F = A_{i+1}$. For $i \leq \delta$, let then
	\begin{enumerate}[a)]
	\item $i = 0$: $\mathcal{D}_i = \mathcal{F} \oplus^*_{\mathcal{A}_0} \mathcal{C}_{-1}'$;
	\item $i = j+1$: $\mathcal{D}_i = \mathcal{D}_j \oplus^*_{\mathcal{A}_i} \mathcal{C}_j'$;
	\item $i$ limit: $\mathcal{D}_i = \bigcup_{j < i} \mathcal{D}_j$.
\end{enumerate}
We then define $(\mathcal{C}_i)_{i < \delta}$ by letting for $i \leq \delta$
	\begin{enumerate}[a')]
	\item $i = 0$: $\mathcal{C}_i = \mathcal{C}_{-1}'$;
	\item $i = j+1$: $\mathcal{C}_i = \langle C_j, C_j' \rangle_{\mathcal{D}_i}$;
	\item $i$ limit: $\mathcal{C}_i = \bigcup_{j < i} \mathcal{C}_j$.
\end{enumerate}
Now, items 1), 2) and 3) are clearly satisfied. Also, for every $j \leq \delta$ we have that
$\bigcup_{i < j} \mathcal{A}_i \preccurlyeq^* \bigcup_{i < j} \mathcal{C}_i$,
as witnessed by the concatenation of the witnesses for $\mathcal{A}_0 \preccurlyeq^* \mathcal{C}_{-1}'$ and $\mathcal{A}_{i+1} \preccurlyeq^* \mathcal{C}_i'$, $0 < i < j$. Furthermore, for every $j < \delta$ we have that
	$$ \mathcal{C}_j \preccurlyeq^* \mathcal{F} \oplus^*_{\mathcal{A}_{j}} \mathcal{C}_j  = \mathcal{D}_{j} \preccurlyeq^* \bigcup_{i < \delta} \mathcal{D}_i.$$
To prove (4.1) and (4.2) just repeat the construction of $(\mathcal{C}_i)_{i < \delta}$ ignoring $(\mathcal{D}_i)_{i < \delta}$ (for $i = j+1$ let $\mathcal{C}_i = \mathcal{C}_j \oplus^*_{\mathcal{A}_i} \mathcal{C}_j'$). Notice that items 1), 2) and 4) of the list above establish (4.1), and item 4) establishes (4.2).

\smallskip
\noindent
	Finally, we prove that $\mathrm{LS}(\mathbf{K}_{+}^{3}, \preccurlyeq^+) = \omega$. Let $\mathcal{A} \in \mathbf{K}^3_+$ and $A_0 \subseteq A$, we want to find $\mathcal{A}^* \in \mathbf{K}^3_+$ such that $A_0 \subseteq A^*$, $\mathcal{A}^* \preccurlyeq^+ \mathcal{A}$ and $|A^*| \leq |A_0| + \omega$. Let $FG_3 \cong \mathcal{D} \leq \mathcal{A}$ and $\mathcal{B} \in \mathbf{K}^3_*$ be such that $\mathcal{D}, \mathcal{A} \preccurlyeq^* \mathcal{B}$. Without loss of generality we can assume that $P(\mathcal{D}) \subseteq A_0 \subseteq P(\mathcal{A})$. Let then $(a_i, B_i)_{i \in (I, <_0)}$ and $(b_i, C_i)_{i \in (J, <_1)}$ be canonical witnesses for $\mathcal{D} \preccurlyeq^* \mathcal{B}$ and $\mathcal{A} \preccurlyeq^* \mathcal{B}$, respectively. Let now $B_0 \subseteq P(\mathcal{B})$ be such that the following conditions are met:  
\begin{enumerate}[1)]
\item $A_0 \subseteq B_0$
\item if $a_i \in B_0$, then $B_i \subseteq B_0$; 
\item if $b_i \in B_0$, then $C_i \subseteq B_0$; 
\item $|B_0| \leq |A_0| + \omega$.
\end{enumerate} 
Let then $A^+ = A \cap B_0$, $\mathcal{A}^* = \langle A^+ \rangle_{\mathcal{B}}$ and $\mathcal{C} = \langle B_0 \rangle_{\mathcal{B}}$. We show that: i) $\mathcal{C} \in \mathbf{K}^3_*$, ii) $\mathcal{C} \preccurlyeq^* \mathcal{B}$ and iii) $\mathcal{A}^* \preccurlyeq^* \mathcal{C}$. This suffices, because then $\mathcal{A}^* \in \mathbf{K}^3_+$,
$\mathcal{A}^*, \mathcal{A} \preccurlyeq^* \mathcal{B} \in \mathbf{K}^3_*$
and of course $|A^*| \leq |B_0| + \omega \leq |A_0| + \omega$. The fact that $\mathcal{A}^* \in \mathbf{K}^3_+$ is because $\mathcal{D} \leq \mathcal{A}^*$ and $\mathcal{D}, \mathcal{A}^* \preccurlyeq^* \mathcal{C} \in \mathbf{K}^3_*$. The fact that $\mathcal{A}^*, \mathcal{A} \preccurlyeq^* \mathcal{B} \in \mathbf{K}^3_*$ is because $\mathcal{A}^* \preccurlyeq^* \mathcal{C} \preccurlyeq^* \mathcal{B}$ and by assumption $\mathcal{A} \preccurlyeq^* \mathcal{B}$. 
We then verify i) - iii). Items i) and ii) are easily established noticing that $X = \left\{ i \in I \, | \, a_i \in B_0 \right\}$ is closed in the sense of $(a_i, B_i)_{i \in (I, <_0)}$, and so by Lemma \ref{closed_implies_strong_subgeo_spec} we have what we want. Regarding iii), let  $J^* = \left\{ i \in J \, | \, b_i \in B_0 - A^+ \right\}$. Then, $J^*$ is closed in the sense of $(b_i, C_i)_{i \in (J, <_1)}$, and so by Lemma \ref{closed_implies_strong_subgeo_spec} we have that $(b_i, C_i)_{i \in (J^*, <_1)}$ witnesses that $\mathcal{A} \preccurlyeq^* \langle A, (b_i)_{i \in J^*} \rangle_{\mathcal{B}}$. Furthermore, $\bigcup_{i \in J^*} C_i \subseteq A^* \cup \left\{ b_i \, | \, i \in J^* \right\}$ and so $(b_i, C_i)_{i \in (J^*, <_1)}$ witnesses also that $\mathcal{A}^* \preccurlyeq^* \langle A^*, (b_i)_{i \in J^*} \rangle_{\mathcal{B}} = \mathcal{C}$.
\end{proof}

	Before proving the stability of $(\mathbf{K}_{+}^{3}, \preccurlyeq^+)$, we notice that the monster model $\mathfrak{M}$ of $(\mathbf{K}_{+}^{3}, \preccurlyeq^+)$ can be constructed so that $\mathfrak{M} \in \mathbf{K}_*^{3}$ and furthermore if $\mathcal{A} \in \mathbf{K}^3_+$, $|A| < \kappa$ and $\mathcal{A} \preccurlyeq^+ \mathfrak{M}$, then $\mathcal{A} \preccurlyeq^* \mathfrak{M}$, where $\kappa$ is the cardinal in function of with we construct the monster model (cf. what has been said after Definition \ref{def_AP}). To impose the first condition, we ask that in the construction of $\mathfrak{M}$ we have that $\mathfrak{M} = \bigcup_{i < \lambda} \mathcal{M}_i$ with $\mathcal{M}_i \in \mathbf{K}_*^{3}$ and $\mathcal{M}_i \preccurlyeq^* \mathcal{M}_j$ for $i < j$. To impose the second condition, we ask that for every $\mathcal{A} \in \mathbf{K}^3_+$ such that $|A| < \kappa$ there exists $\mathcal{A}' \preccurlyeq^* \mathcal{M}_0$ such that $\mathcal{A} \cong \mathcal{A}'$ (requirement $(\star)$). This works since if $(\star)$ is satisfied and $\mathcal{A} \in \mathbf{K}^3_+$, $|A| < \kappa$ and $\mathcal{A} \preccurlyeq^+ \mathfrak{M}$, then there exists $\mathcal{A} \cong \mathcal{A}' \preccurlyeq^* \mathcal{M}_0 \preccurlyeq^* \mathfrak{M}$, and so there exists $f \in \mathrm{Aut}(\mathfrak{M})$ sending $\mathcal{A}'$ to $\mathcal{A}$, and thus $\mathcal{A} \preccurlyeq^* \mathfrak{M}$ because $\mathcal{A}' \preccurlyeq^* \mathfrak{M}$. We impose requirement $(\star)$ by defining $\mathcal{M}_0 = \bigcup_{i < \mu} \mathcal{M}_0^i$, where $(\mathcal{M}_0^i)_{i < \mu}$ runs through all the isomorphism types of structures in $\mathbf{K}_{+}^3$ of power $< \kappa$, and we let
	$$\mathcal{M}_0^{i+1} = \mathcal{M}^i_0 \oplus^*_{FG_3} \mathcal{B},$$
where $\mathcal{B} \in \mathbf{K}_*^{3}$ is a witness for $\mathcal{A}_i \in \mathbf{K}_+^3$ (w.o.l.g $FG_3 \preccurlyeq^* \mathcal{M}_i, \mathcal{B})$.

	\begin{theorem}\label{+_is_stable}	$(\mathbf{K}_{+}^{3}, \preccurlyeq^+)$ is stable in every cardinal $\kappa$ such that $\kappa^{\omega} = \kappa$.
\end{theorem}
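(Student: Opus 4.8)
The plan is to bound the number of Galois types of finite tuples over a fixed $\mathcal{M}\preccurlyeq^+\mathfrak{M}$ with $|M|=\lambda$ (I write $\lambda$ for the cardinal with $\lambda^{\omega}=\lambda$, to keep it apart from the monster parameter), showing there are at most $\lambda$ of them. First I would fix the monster model as in the paragraph following Theorem \ref{+_is_AEC}, so that $\mathfrak{M}\in\mathbf{K}_*^{3}$ and every $\preccurlyeq^+$-submodel of $\mathfrak{M}$ of size below the monster parameter is actually a $\preccurlyeq^*$-submodel; in particular $\mathcal{M}\preccurlyeq^*\mathfrak{M}$ and every countable $\mathcal{A}\preccurlyeq^+\mathfrak{M}$ is a $\preccurlyeq^*$-submodel, so that model-homogeneity of $\mathfrak{M}$ applies to them. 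A finite tuple may be assumed to consist of points, since a new line is a join of two points at least one of which is new, so we are reduced to counting types of finite tuples $\bar a$ of points of $\mathfrak{M}$.

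The heart of the argument is a local character statement with a countable base: for every finite tuple $\bar a$ of points there are a countable $\mathcal{A}\preccurlyeq^*\mathcal{M}$ and a countable $\mathcal{C}_{\bar a}$ with $\mathcal{A}\preccurlyeq^*\mathcal{C}_{\bar a}$ and $\bar a\in C_{\bar a}$ such that $\langle C_{\bar a},M\rangle_{\mathfrak{M}}=\mathcal{C}_{\bar a}\oplus^*_{\mathcal{A}}\mathcal{M}\preccurlyeq^*\mathfrak{M}$. To obtain this I would take a canonical witness $(c_i,B_i)_{i\in(I,<)}$ for $\mathcal{M}\preccurlyeq^*\mathfrak{M}$, let $X_0$ be the finite set of indices of the points occurring in $\bar a$, extend it to a countable closed $X_0\subseteq X\subseteq I$ by Lemma \ref{Konig_spec}, and apply Lemma \ref{closed_implies_strong_subgeo_spec} to conclude $\langle M,(c_i)_{i\in X}\rangle_{\mathfrak{M}}\preccurlyeq^*\mathfrak{M}$; a second König-style closure inside this structure then isolates the countable base $\mathcal{A}\preccurlyeq^*\mathcal{M}$ over which the structure is the free $\preccurlyeq^*$-amalgam in the sense of Definition/Proposition \ref{free_amalgamation_spec}. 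It is precisely the non-wellfoundedness of $\preccurlyeq^*$ that forces $X$, and hence $\mathcal{A}$, to be merely countable (contrast Lemma \ref{Konig} for $\preccurlyeq$, which preserves finiteness); this is what makes $\lambda^{\omega}$ rather than $\lambda$ appear in the count, and why the hypothesis $\lambda^{\omega}=\lambda$ is needed.

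Next I would prove uniqueness of the free extension: if $\mathcal{A}\preccurlyeq^*\mathcal{M}$ is countable, $\mathrm{tp}(\bar a/\mathcal{A})=\mathrm{tp}(\bar a'/\mathcal{A})$, and both $\bar a,\bar a'$ are placed freely over $\mathcal{A}$ relative to $\mathcal{M}$ in the above sense, then $\mathrm{tp}(\bar a/\mathcal{M})=\mathrm{tp}(\bar a'/\mathcal{M})$. The proof follows the template of part (f) of Theorem \ref{good_frame_th}: an automorphism of $\mathfrak{M}$ fixing $\mathcal{A}$ and taking $\bar a'$ to $\bar a$, a common strong extension $\mathcal{D}$ of the two small witnesses, and a further automorphism (obtained from model-homogeneity together with the existence of the free $\preccurlyeq^*$-amalgam) realizing $\mathcal{D}\oplus^*_{\mathcal{A}}\mathcal{M}$ strongly inside $\mathfrak{M}$, after which the explicit description of $\oplus^*_{\mathcal{A}}$ in Definition/Proposition \ref{free_amalgamation_spec} lets one stitch the relevant isomorphisms into an element of $\mathrm{Aut}(\mathfrak{M}/\mathcal{M})$. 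I would also record the auxiliary bound that any countable $\mathcal{A}\in\mathbf{K}_+^{3}$ carries at most $2^{\aleph_0}$ Galois types of finite tuples: such a type is determined by the isomorphism type, over $\mathcal{A}$ and with the tuple marked, of a countable $\mathcal{N}\preccurlyeq^+\mathfrak{M}$ containing $A$ and the tuple (which exists since $\mathrm{LS}(\mathbf{K}_+^{3},\preccurlyeq^+)=\omega$, with $\mathcal{A}\preccurlyeq^+\mathcal{N}$ by the Coherence Axiom), and up to isomorphism over $\mathcal{A}$ there are at most $2^{\aleph_0}$ such $\mathcal{N}$, while equal such isomorphism types give equal Galois types by model-homogeneity.

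Putting it together: a countable $\preccurlyeq^*$-submodel of $\mathcal{M}$ is generated by its (countable) point set, so there are at most $|{}^{\omega}M|=\lambda^{\omega}=\lambda$ of them; over each there are at most $2^{\aleph_0}\le\lambda^{\omega}=\lambda$ Galois types, and each of these has by the uniqueness step a unique free extension over $\mathcal{M}$; by the local character step every Galois type over $\mathcal{M}$ arises this way. Hence there are at most $\lambda\cdot\lambda=\lambda$ Galois types over $\mathcal{M}$, which is stability in $\lambda$. The step I expect to be the main obstacle is the first: genuinely setting up the fragment of independence theory for $\preccurlyeq^*$ (local character with a countable base, stationarity over such a base, and the functoriality of $\oplus^*_{\mathcal{A}}$) mirroring what Section \ref{plane} did for $(\mathbf{K}^{3},\preccurlyeq)$ with finite bases — the delicate part throughout being the bookkeeping with canonical witnesses and closed sets needed to guarantee that the free amalgam sits strongly inside $\mathfrak{M}$, which is what makes model-homogeneity applicable.
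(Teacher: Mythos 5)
Your overall counting arithmetic ($\lambda^{\omega}\cdot 2^{\omega}=\lambda$) and the idea that a Galois type over $\mathcal{M}$ should be captured by countably much data are in the right spirit, but the step you yourself flag as the main obstacle is a genuine gap, and it is not just a matter of bookkeeping. Your localization requires, for an arbitrary $\mathcal{M}\preccurlyeq^{+}\mathfrak{M}$, a \emph{countable} $\mathcal{A}\preccurlyeq^{*}\mathcal{M}$ containing the prescribed countable set of attachment points, with $\mathcal{A},\mathcal{C}_{\bar a},\mathcal{M}\in\mathbf{K}^{3}_{*}$, so that $\mathcal{C}_{\bar a}\oplus^{*}_{\mathcal{A}}\mathcal{M}$ is even defined (Definition/Proposition \ref{free_amalgamation_spec} is stated only for structures in $\mathbf{K}^{3}_{*}$ over a $\preccurlyeq^{*}$-strong base). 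None of this is available: the paper does not know that $\mathcal{M}\in\mathbf{K}^{3}_{*}$ (only $\mathcal{M}\in\mathbf{K}^{3}_{+}$ and $\mathcal{M}\preccurlyeq^{*}\mathfrak{M}$), it proves no downward L\"owenheim--Skolem property for $\preccurlyeq^{*}$ inside a given model, and these questions are exactly the unresolved coherence issues for $\preccurlyeq^{*}$ that motivated passing to $\preccurlyeq^{+}$ in the first place. Moreover, even granting such an $\mathcal{A}$, the claims that $\langle C_{\bar a},M\rangle_{\mathfrak{M}}$ is the free $\preccurlyeq^{*}$-amalgam over $\mathcal{A}$ and that free extensions over countable bases are unique (your stationarity step, modelled on (f) of Theorem \ref{good_frame_th}) do not follow from Lemmas \ref{Konig_spec} and \ref{closed_implies_strong_subgeo_spec}; those lemmas only give $\mathcal{M}\preccurlyeq^{*}\langle M,(c_{i})_{i\in X}\rangle_{\mathfrak{M}}\preccurlyeq^{*}\mathfrak{M}$, a statement over the full base $\mathcal{M}$, not a decomposition over a small $\mathcal{A}\leq\mathcal{M}$. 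In effect you are asking for a fragment of an independence calculus for $\preccurlyeq^{*}$/$\preccurlyeq^{+}$ which the paper never establishes.

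The paper's proof shows this machinery is unnecessary: it keeps the whole model (there called $\mathcal{A}$) as the base and localizes only the \emph{witness data}. Fixing a canonical witness $(a_{i},B_{i})_{i\in(I,<)}$ for $\mathcal{A}\preccurlyeq^{*}\mathfrak{M}$ (available by the special construction of $\mathfrak{M}$), for each point $a_{j}\in\mathfrak{M}-A$ one closes off to a countable closed $J^{j}\ni j$ with $j$ largest, obtaining $\mathcal{A}\preccurlyeq^{*}\mathcal{A}^{j}=\langle A,(a_{i})_{i\in J^{j}}\rangle_{\mathfrak{M}}\preccurlyeq^{*}\mathfrak{M}$ by Lemma \ref{closed_implies_strong_subgeo_spec}. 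If the labelled countable orders $(J^{j},<,(B_{i}))$ and $(J^{k},<,(B_{i}))$ are isomorphic, Proposition \ref{canonical_determines_str} yields an isomorphism $\mathcal{A}^{j}\cong\mathcal{A}^{k}$ fixing $A$ pointwise, and model homogeneity then gives $\mathrm{tp}(a_{j}/\mathcal{A})=\mathrm{tp}(a_{k}/\mathcal{A})$. Counting such configurations bounds $|S_{1}(\mathcal{A})|$ by $|A|^{\omega}\cdot 2^{\omega}$, which is $\kappa$ when $\kappa^{\omega}=\kappa$. So the decisive ingredient you are missing is Proposition \ref{canonical_determines_str} (canonical witnesses determine the extension), which substitutes entirely for free amalgamation over countable bases and for any uniqueness-of-nonforking-extension statement; if you want to salvage your route, you would first have to resolve the existence of countable $\preccurlyeq^{*}$-strong submodels of an arbitrary $\preccurlyeq^{+}$-strong model, which the paper leaves open.
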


	\begin{proof} Let $\mathcal{A}$ be infinite and such that $\mathcal{A} \preccurlyeq^+ \mathfrak{M}$, we count $S_1(\mathcal{A})$, i.e. the number of one types over $\mathcal{A}$. For infinite $\mathcal{A}$, the number of one types over $\mathcal{A}$ equals the number of types (of any arity) over $\mathcal{A}$, so it suffices to bound $S_1(\mathcal{A})$. First of all, because of the properties of $\mathfrak{M}$ mentioned above, we have that $\mathcal{A} \preccurlyeq^* \mathfrak{M} \in \mathbf{K}_{*}^{3}$. Let $(a_i, B_i)_{i \in (I, <)}$ be a canonical witness of this, and $a_j, a_k \in \mathfrak{M} - A$. Let then $J^j, J^k \subseteq I$ be closed, countable and such that $j \in J^j$ and $j$ is the largest element of $(J^j, <)$, and similarly for $k$. Thus, by Lemma \ref{closed_implies_strong_subgeo_spec}, we have that 
	$$\mathcal{A} \preccurlyeq^* \langle A, (a_i)_{i \in J^j} \rangle_{\mathfrak{M}} = \mathcal{A}^j \preccurlyeq^* {\mathfrak{M}} \; \text{ and } \; \mathcal{A} \preccurlyeq^* \langle A, (a_i)_{i \in J^k} \rangle_{\mathfrak{M}} = \mathcal{A}^k \preccurlyeq^* {\mathfrak{M}}.$$
Now, if there is an isomorphism $\pi: (J^j, <) \rightarrow (J^k, <)$ such that for all $t \in J^j$ we have that $\pi^*(B_t) = B_{\pi(t)}$, for $\pi^*: \mathcal{A}^j \rightarrow \mathcal{A}^k$ such that $\pi^* \restriction A = \mathrm{id}_A$ and $\pi^*(a_i) = a_{\pi(i)}$ (this determines $\pi^*$ taking images of sups of points as sups of the images of those points), then $\pi^*: \mathcal{A}^j \cong \mathcal{A}^k$, because of Proposition \ref{canonical_determines_str}. Thus, by model homogeneity we have that $\mathrm{tp}(a_j/\mathcal{A}) = \mathrm{tp}(a_k/\mathcal{A})$, since $\mathcal{A}^j, \mathcal{A}^k \preccurlyeq^* \mathfrak{M}$ and being $\pi$ an isomorphism of orders it sends $j$ to $k$. Thus, it suffices to count the number of sets of the form $(a_i, B_i)_{i \in J}$, for $J \subseteq I$ of power $< \omega_1$, modulo isomorphisms $\pi$ as above, and this number is bounded by $|A|^{\omega} \cdot 2^{\omega}$. Hence, if $|A| = \kappa^{\omega} = \kappa$, then $|S_1(\mathcal{A})| = \kappa^{\omega} \cdot 2^{\omega} = \kappa$.
\end{proof}
	
	 Notice that $\mathbf{K}_{+}^{3}$ is closed under ultraproducts (variations on the argument used in Lemma \ref{ultraproduct_lemma}). We then conclude the paper with the following open problem.
	
	\begin{oproblem} Understand the first-order theory of the monster model $\mathfrak{M}(\mathbf{K}_{^+}^{3}, \preccurlyeq^+)$.
\end{oproblem}


\end{document}